\documentclass[11pt, oneside]{article}


\usepackage{lipsum}
\usepackage{amsfonts, amssymb, amsthm, dsfont}
\usepackage{graphicx}
\usepackage{epstopdf}
\usepackage{algorithmic}
\usepackage{color}
\ifpdf
  \DeclareGraphicsExtensions{.eps,.pdf,.png,.jpg}
\else
  \DeclareGraphicsExtensions{.eps}
\fi

\usepackage[colorlinks=true, allcolors=blue]{hyperref}
\hypersetup{
 pdfauthor={H.~Do, J.~Feydy, O.~Mula},
 pdftitle={},
 pdfsubject={},
 pdfkeywords={}
}



\newtheorem{theorem}{Theorem}[section]
\newtheorem{lemma}[theorem]{Lemma}

\newtheorem{assumption}[theorem]{Assumption}

\newtheorem{remark}[theorem]{Remark}

\usepackage{amsopn}
\usepackage{mathtools}
\mathtoolsset{showonlyrefs=true}
\usepackage{caption}
\usepackage{subcaption}
\usepackage[left=0.9in,right=0.9in,top=1in,bottom=1in,includefoot,includehead,headheight=13.6pt]{geometry}

\usepackage[breakable]{tcolorbox}
\newtcolorbox{boxblue}[1]{breakable,colback=white,colframe=BlueMAP,fonttitle=\bfseries,title=#1,left=4pt,right=4pt,top=4pt,bottom=6pt}
\newtcolorbox{boxgreen}[1]{breakable,colback=white,colframe=red,fonttitle=\bfseries,title=#1,left=4pt,right=4pt,top=4pt,bottom=6pt}
\newtcolorbox{partie}{colback=white,colframe=BlueMAP,fonttitle=\bfseries}

\newcommand{\cA}{\ensuremath{\mathcal{A}}}
\newcommand{\cB}{\ensuremath{\mathcal{B}}}

\newcommand{\cG}{\ensuremath{\mathcal{G}}}
\newcommand{\cH}{\ensuremath{\mathcal{H}}}
\newcommand{\cI}{\ensuremath{\mathcal{I}}}

\newcommand{\cL}{\ensuremath{\mathcal{L}}}
\newcommand{\cM}{\ensuremath{\mathcal{M}}}

\newcommand{\cP}{\ensuremath{\mathcal{P}}}

\newcommand{\cX}{\ensuremath{\mathcal{X}}}
\newcommand{\cY}{\ensuremath{\mathcal{Y}}}
\newcommand{\cZ}{\ensuremath{\mathcal{Z}}}


\newcommand{\bN}{\ensuremath{\mathbb{N}}}

\newcommand{\bR}{\ensuremath{\mathbb{R}}}


\newcommand{\rL}{\ensuremath{\mathrm{L}}}

\newcommand{\rY}{\ensuremath{\mathrm{Y}}}


\newcommand{\pbM}{\ensuremath{\mathbf{M}}}

\newcommand{\x}{\ensuremath{\mathbf{x}}}
\newcommand{\y}{\ensuremath{\mathbf{y}}}
\newcommand{\z}{\ensuremath{\mathbf{z}}}
\newcommand{\e}{\ensuremath{\mathbf{e}}}

\newcommand{\balpha}{{\ensuremath{\boldsymbol \alpha}}}
\newcommand{\bbeta}{{\ensuremath{\boldsymbol \beta}}}
\newcommand{\bgamma}{{\ensuremath{\boldsymbol \gamma}}}

\def\[{\left[}
\def\]{\right]}
\def\<{\langle}
\def\>{\rangle}
\def\({\left(}
\def\){\right)}
\def\[{\left [}
\def\]{\right]}
\def\({\left(}
\def\){\right)}













\newcommand{\charFun}{\ensuremath{\mathds{1}}}


\newcommand{\rd}{\ensuremath{\mathrm d}}
\newcommand{\dx}{\ensuremath{\mathrm dx}}






\newcommand{\bary}{\ensuremath{\mathrm{Bar}}}

\newcommand{\cond}{\; :\;}

\usepackage{soul}




 
%

\title{Moment-SoS Methods for Optimal Transport Problems}
\author{Olga Mula, Anthony Nouy}

\begin{document}
\maketitle
\abstract{
Most common Optimal Transport (OT) solvers are currently based on an approximation of underlying measures by discrete measures.
However, it is sometimes relevant to work only with moments of measures instead of the measure itself, and many common OT problems can be formulated as moment problems (the most relevant examples being $L^p$-Wasserstein distances, barycenters, and Gromov-Wasserstein discrepancies on Euclidean spaces). We leverage this fact to develop a generalized moment formulation that covers these classes of OT problems. The transport plan is represented through its moments on a given basis, and the marginal constraints are expressed in terms of moment constraints. A practical computation then consists in considering a truncation of the involved moment sequences up to a certain order, and using the polynomial sums-of-squares hierarchy for measures supported on semi-algebraic sets. We prove that the strategy converges to the solution of the OT problem as the order increases. We also show how to approximate linear quantities of interest, and how to estimate the support of the optimal transport map from the computed moments using Christoffel-Darboux kernels. Numerical experiments illustrate the good behavior of the approach.}

\section{Introduction}

Optimal Transport provides a principled and versatile approach to work with probability distributions. In recent years, an increasing amount of theoretical results are being leveraged to build numerical solvers which are by now playing a fundamental role in numerous applications ranging from economy \cite{Carlier2021, Galichon2021}, quantum chemistry \cite{CFP2015, BCN2016}, gradient flow modeling \cite{CDPS2017}, and machine learning \cite{PC2019}. 

The prototypical example is the two-marginal Monge-Kantorovich problem: given  two Borel sets $\cX_1\subset \bR^{n_1}$ and $\cX_2\subset \bR^{n_2}$ and given two probability measures $\mu$ and $\nu$, solve 
\begin{equation}
\label{eq:MK}
\inf \biggl\{ \int_{\cX\times \cY} c(\x_1,\x_2) \rd\pi(\x_1,\x_2) \cond \pi \in \Pi(\cX_1 \times \cX_2; \mu , \nu) \biggr\}
\end{equation}
where $c:\cX_1\times \cX_2\mapsto \bR^+$ is a lower semi-continuous cost function. The infimum runs over the set $ \Pi(\cX_1 \times \cX_2; \mu , \nu) $ of coupling measures (usually called transport plans) on $\cX_1\times \cX_2$ with marginal distributions equal to $\mu$ and $\nu$ respectively. More generally, the problem can be posed on general Polish spaces. It can also be multi-marginal as we introduce later on in Section \ref{sec:OT}.

Numerous methods have been introduced for solving such problems in practice. Many algorithms {rely on an approximation of measures by discrete measures (by sampling or discretization on discrete grids)}, whose numerical cost quickly becomes prohibitive when the number of discretization points increases. 
Among the existing strategies to mitigate this effect, probably the most popular one is based on adding an entropic regularization to the loss function which is then solved with a Sinkhorn algorithm (\cite{Cuturi2013, CPSV2018, PC2019}). The algorithm is however still posed on a grid. Other approaches involving discrete grids are the auction algorithm \cite{BC1989}, numerical methods based on Laguerre cells \cite{GM2018}, multiscale algorithms \cite{Merigot2011, Schmitzer2016} and methods based on dynamic formulations \cite{BB2000}. Recently, an approach that dynamically discovers sampling points where the support of the solution measure lies has been introduced in \cite{FSV2022}. It provides promising results to address the curse of dimensionality when the optimal transport plans have sparse support. 

The present paper considers an entirely different avenue where the spatial discretization is replaced by a spectral discretization where the transport plan is represented through its moments on a given basis, and the marginal constraints are expressed in terms of moment constraints. A practical computation then consists in considering a truncation of the involved moment sequences up to a certain order. The procedure needs to be well posed in the sense that one needs to guarantee convergence to the original problem as the truncation order increases.

This type of approach is not entirely novel but it has only been explored in a few prior works despite that it is often relevant to work only moments of a measure instead of the measure itself. The idea was originally mentioned in \cite{Lasserre2008} for the case of a polynomial basis and polynomial cost function. The moment approach was also recently explored in \cite{Catala2020} for applications related to image processing involving trigonometric polynomial bases. It has also been recently used for solving the Monge problem \cite{henrion2022graph}, where an approximation of the transport map is constructed by solving a moment matrix completion problem and an approximation method based on Christoffel-Darboux kernel \cite{marx2021semi}. A relatively different contribution can be found in \cite{ACEL2020}, where the authors relax marginal constraints into a set of moment constraints but here moments do not come from a prescribed basis. Instead they are selected from a given dictionary involving potentially general test functions. Last but not least, the idea of leveraging moment formulations and sums-of-squares has also been used for the dual formulation of problem \eqref{eq:MK} in order to derive statistical estimation bounds of high dimensional OT problems (see \cite{VMRB2021, MVBVR2021}).

In view of the present state of the art, the main contribution of this paper
is to provide a general moment problem formulation of 
most common OT problems with polynomial or piecewise polynomial costs.
In particular, the problem of estimating $L^p$-Wasserstein distances for $p\geq 1$, barycenters, and Gromov-Wasserstein discrepancies on Euclidean spaces will be covered by our framework. We prove that the resulting sequence of optimal solutions converges to the whole moment sequence of the original OT measure as the polynomial order increases. The case of piecewise polynomial costs is addressed by a reformulation in terms of conditional measures.

For practical computations, we can directly apply the SoS-moment hierarchy similarly as in \cite{Lasserre2008, Catala2020}, which eventually boils down to solving semidefinite programming optimization problems. It is worth emphasizing that by switching the point of view to a moment problem,  we do not recover the measure itself. Instead, the resulting outputs will be moments of the optimal transport plan. Depending on the application, this may of course be a limitation. However, we show that it is possible to recover linear quantities of interest, and also the support of the measure by a post-processing algorithm based on Christoffel-Darboux kernels. Our numerical examples show that even the support of concentrated measures can efficiently be estimated with relatively low polynomial order. This feature seems particularly appealing. It could for instance be leveraged to recover optimal transport plans from high-dimensional OT problems: we could use the support estimation to provide well-chosen sampling points to grid-based approaches. A full development of these ideas will be presented in a forthcoming work.

The paper is organized as follows. After introducing some basic notation in Section \ref{sec:notation}, we define optimal optimal transport problems in Section \ref{sec:OT}. We prove that when the involved cost function is a polynomial or a piecewise polynomial, the problem can be interpreted as a generalized moment problem. This section will allow us to introduce the  basic principles of our approach to OT problems and  important results from real algebraic geometry.    
In Sections \ref{sec:wasserstein} and \ref{sec:gromov}, we consider
 optimal transport problems that are playing a crucial role in numerous applicative areas, and prove that they can be expressed as generalized moment problems. More precisely, we consider in section \ref{sec:wasserstein} the problems of computing $L^p$-Wasserstein distances and barycenters for $p\geq 1$, and in section \ref{sec:gromov} the problems of computing Gromov-Wasserstein discrepancies and corresponding barycenters. 
In Section \ref{sec:hierarchy}, we formulate a generalized moment problem that includes all previous OT problems. We derive a solution strategy based on the SoS-moment (or Lasserre's) hierarchy, and we prove its convergence to the solution of the OT problem. Section \ref{sec:postproc} explains how to postprocess the moments to estimate linear quantities of interest and the support of the measure. Section \ref{sec:numerics} illustrates the potential of the approach by giving numerical results on estimating the $L^1$ and $L^2$ Wasserstein distances, barycenters, and the $L^2$ Gromov-Wasserstein discrepancy.

\section{Some elements of notation}
\label{sec:notation}
In the following, $\bN$ should be understood as the set of non-negative integers (including zero). Vectors $\x$ from the Euclidean space $\bR^n$ will be denoted with bold notation. The coordinates $\x=(x_1,\dots, x_n)^T$ will be written with plain text. The canonical vectors will be denoted as $\e_i = (0,\dots, 0, 1,0,\dots, 0)^T$ for $i\in\{1,\dots,n\}$. For any index $p\in \bN^*$,
$
\Vert \x \Vert_p \coloneqq \left(\sum_{i=1}^n |x_i|^p \right)^{1/p}
$
denotes the $\ell^p(\bR^n)$ norm of $\x$. 
We let $\bR[\x]$ be the space of real polynomials over $\bR^n$. 
For any multi-index $\boldsymbol{\alpha}=(\alpha_1,\dots, \alpha_n)^T\in \bN^n$ with length  $\vert \balpha \vert = \sum_{i=1}^n \alpha_i$,  we define the associated monomial $
\x^{\balpha} = \prod_{i=1}^n x_i^{\alpha_i}
$ of degree $\vert \balpha \vert $. 
We let $\bN_{r}^n = \{\balpha \in \bN^n : \vert \balpha \vert \le r\},$ and 
 $\bR[\x]_r$ be the space of real polynomials of degree less than $r$ that can be written $\sum_{\balpha \in \bN_r^n} c_\balpha \x^\balpha$ for some real coefficients $c_\balpha$. 

For any Borel set $\cX$ in $\bR^n$, we denote $\cM(\cX)$ the space of finite signed Borel measures supported on $\cX$,
$$
\cM(\cX)_+ \coloneqq \{ \mu \in \cM(\cX) \cond \mu \geq 0 \}
$$
its positive cone of finite Borel measures supported on $\cX$, and
$$\cP(\cX) \coloneqq \{ \mu \in \cM(\cX)_+ \cond \mu(\cX)=1 \}$$
the set of probability measures supported on $\cX$. The indicator function of a subset $A\subset \bR^n$ is denoted as $\charFun_A(\x)$.
For any Borel set $\cX$   in $\bR^n$ and any measure $\mu \in \cM(\cX)$,  
$$
m_{\balpha}(\mu) \coloneqq \int_{\cX} \x^{\balpha} \rd\mu(\x)
$$
is the moment of $\mu$ associated to the multi-index $\balpha\in \bN^n$, and 
$$
m(\mu) \coloneqq \left(m_{\balpha}(\mu)\right)_{\balpha\in \bN^n}
$$
is the sequence of moments of $\mu$. The mass of $\mu$ is denoted $\mathrm{mass}(\mu) = m_0(\mu).$ A measure $\mu$ is said \emph{determinate} if it is uniquely determined by its moment sequence $m(\mu)$.

Finally, for a given sequence $y = (y_\balpha)_{\balpha \in \bN^n}$, we introduce the \emph{Riesz functional} $\ell_y : \bR[\x] \mapsto \bR$ which associates to a real polynomial $g(\x) = \sum_{\balpha \in \bN^n} a_\balpha \x^\balpha $ the value 
 $\ell_y(g) = \sum_{\balpha \in \bN^n} a_\balpha y_\balpha$. For any measure $\mu$, we thus have
 \begin{equation}
  \label{eq:riesz}
 \ell_{m(\mu)}(g) = \sum_{\balpha \in \bN^n} a_\balpha m_\balpha(\mu) = \int_{\bR^n} g(\x) d\mu(\x) ,\quad \forall g \in \bR[\x].
 \end{equation}

\section{Optimal transport problems with polynomial costs}\label{sec:OT}

\subsection{Formulation}
To guide the subsequent discussion, we consider the multi-marginal version of problem \eqref{eq:MK} as a prototypical example of an OT problem. This problem consists in considering $K$ probability measures $\mu_i\in \cP(\cX_i)$ defined on Borel sets $\cX_i\subset \bR^{n_i}$ for all $i\in \{1, \dots, K\}$, and solving  
\begin{equation}
\label{eq:MK-multimarg}
\rho \coloneqq \inf_{\pi \in \Pi} \cL(\pi).
\end{equation}
The loss function is of the form
\begin{equation}
\label{eq:loss-multimarg}
\cL(\pi) \coloneqq \int_\cX c(\x) \rd \pi(\x),\quad \forall \pi \in \cM(\cX)_+,
\end{equation}
and the set $\cX$ is  defined as the product set 
$$
\cX \coloneqq \cX_1\times\dots\times \cX_K.
$$
Note that $\cX$ can be identified with a subset of $\bR^n$, with
$$
n\coloneqq n_1 + \hdots + n_K.
$$
The function $c: \cX \to \bR$ is a given cost function, and the constraint $\Pi$ is a shorthand notation for the set of coupling measures having $\mu_i$ as marginals, namely
\begin{equation}
\label{eq:couplings}
\Pi(\cX;\mu_1,\dots, \mu_K) 
\coloneqq
\{ \pi \in \cP(\cX) \cond  \mathrm{proj}_i \# \pi  = \mu_i,\quad \forall i \in \{1,\dots, K\} \},
\end{equation}
where $\mathrm{proj}_i: \cX\to \cX_i$ denotes the canonical projection, and the push-forward measure $\mathrm{proj}_i \# \pi$ is the $i$-th marginal of $\pi$.

The existence of a minimizer for \eqref{eq:MK-multimarg} is standard in OT theory. Indeed, $\Pi $ is trivially non empty since the coupling $\otimes_{i=1}^K \mu_i$ belongs to this set. The set $\Pi$ is convex and compact for the $weak-*$ topology thanks to the imposed marginals, and if the cost function $c$ is lower semi-continuous (l.s.c.), then  the loss function $\cL:\pi \mapsto \int c \, \rd\pi$ is l.s.c. with respect to the weak-* topology.
Hence we can guarantee the existence of a minimizer by imposing a very weak hypothesis on the cost function $c$ such as  lower-semicontinuity.

We next show that when the loss function is of polynomial nature, problem \eqref{eq:MK-multimarg} is equivalent to a moment problem under some conditions, as initially observed in \cite{Lasserre2008} (see also \cite[Section 7.3]{lasserre2009moments}). To see this, note first of all that if $c$ is a polynomial from $\bR[\x]$, it follows from \eqref{eq:riesz} that the loss function \eqref{eq:loss-multimarg} satisfies
\begin{equation}
\label{eq:loss-pi-to-mom}
\cL(\pi) = \ell_{m(\pi)}(c) \coloneqq \rL(m(\pi)).
\end{equation}

In addition, the marginal constraints on 
$\pi \in \Pi$ in problem \eqref{eq:MK-multimarg} imply constraints on the moments of $\pi$, 
\begin{equation}
\label{eq:mom-marginals}
m_{(0,\dots, 0, \bbeta, 0,\dots,0)}(\pi) = m_{\bbeta}(\mu_i),\quad \forall \bbeta \in \bN^{n_i}, \text{ and } \forall i\in \{1,\dots, K\}.
\end{equation}

As a result of \eqref{eq:loss-pi-to-mom} and \eqref{eq:mom-marginals}, instead of considering the OT problem \eqref{eq:MK-multimarg} where we search for an unknown measure $\pi \in \Pi$, one can alternatively consider the moment problem of searching for the optimal sequence $y = (y_\balpha)_{\balpha \in \bN^n}$ solving
\begin{equation}
\label{eq:mom-pb}
\rho_{mom} \coloneqq \inf_{y \in \Pi_{mom}} L(y).
 \end{equation}
 where $\Pi_{mom} := \Pi_{mom}(\cX ; m(\mu_1), \hdots, m(\mu_K))$ is the set of sequences in $\bR^{\bN^{n}}$ satisfying the following constraints:
\begin{itemize}
\item[(i)] \textit{Marginal conditions}: the sequence $y$ should satisfy 
\begin{equation}
    \label{eq:seq-mom-marginals}
    y_{(0,\dots, 0, \bbeta, 0,\dots,0)} = m_{\bbeta}(\mu_i),\quad \forall \bbeta \in \bN^{n_i}, \text{ and } \forall i\in \{1,\dots, K\}
    \end{equation}
\item[(ii)] \textit{Moment sequence condition:} the sequence $y$ must have a representing measure supported on $\cX$, that is, there must exist a measure $\pi \in \cM(\cX)_+$ such that $y = m(\pi)$. We write this condition as $y\in \mathrm{MS(\cX)}.$
 \end{itemize}

The equivalence between problems \eqref{eq:MK-multimarg} and \eqref{eq:mom-pb} is closely related to the determinacy of measures $\mu_i$, for which a sufficient condition is that the sets $\cX_i$ are compact.
We summarize these facts in the theorem below.

\begin{theorem}[Polynomial cost]\label{th:equivalence-multimarg-poly}
If the sets $\{\cX_i\}_{i=1}^K$ are compact, then the OT problem  \eqref{eq:MK-multimarg} with polynomial cost is equivalent to the generalized moment problem \eqref{eq:mom-pb}:
\begin{itemize}
  \item A minimizer $\pi^*$ of problem \eqref{eq:MK-multimarg} is such that $m(\pi^*)$ is a minimizer of problem \eqref{eq:mom-pb}.
  \item A minimizer $y^*$ of problem \eqref{eq:mom-pb} has a representing measure which is solution of  \eqref{eq:MK-multimarg}.
\end{itemize}
In addition, if the solution $\pi^*$ of the OT problem \eqref{eq:MK-multimarg} is unique, then the solution $y^*$ of $ \eqref{eq:mom-pb}$ is unique and $y^* = m(\pi^*).$
\end{theorem}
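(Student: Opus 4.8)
The plan is to set up a bijection between the feasible set $\Pi$ of the OT problem and the feasible set $\Pi_{mom}$ of the moment problem, under which the two objective functionals coincide, and then to transport minimizers across this bijection. The single nontrivial ingredient is the \emph{determinacy of measures supported on a compact set}: if $\cX\subset\bR^n$ is compact and two measures in $\cM(\cX)_+$ have the same moment sequence, then by linearity they integrate every polynomial identically, hence every continuous function identically (polynomials are dense in $C(\cX)$ by Stone--Weierstrass), hence they coincide by the Riesz representation theorem. In particular, $m$ is injective on $\cM(\cX)_+$, and every $y\in\mathrm{MS}(\cX)$ has a \emph{unique} representing measure, which I denote $\pi_y$.

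First I would record that $\cX=\cX_1\times\cdots\times\cX_K$, as well as each $\cX_i$, is compact, being a finite product of compact sets. Then I would prove $m(\Pi)=\Pi_{mom}$. For the inclusion $\subseteq$: if $\pi\in\Pi$ then $\pi\in\cP(\cX)$, so $m(\pi)\in\mathrm{MS}(\cX)$, i.e. (ii) holds, while the marginal identities $\mathrm{proj}_i\#\pi=\mu_i$ give exactly \eqref{eq:mom-marginals}, i.e. (i) holds. For $\supseteq$: given $y\in\Pi_{mom}$, let $\pi=\pi_y$ be its representing measure; the $\bbeta=0$ case of (i) gives $\mathrm{mass}(\pi)=y_0=m_0(\mu_i)=1$, so $\pi\in\cP(\cX)$. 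For each $i$, the pushforward $\mathrm{proj}_i\#\pi$ is a measure on the compact set $\cX_i$ whose moments satisfy $m_{\bbeta}(\mathrm{proj}_i\#\pi)=\int_{\cX}\x^{(0,\dots,0,\bbeta,0,\dots,0)}\,\rd\pi=y_{(0,\dots,0,\bbeta,0,\dots,0)}=m_{\bbeta}(\mu_i)$ for all $\bbeta\in\bN^{n_i}$; by determinacy on $\cX_i$ this forces $\mathrm{proj}_i\#\pi=\mu_i$, so $\pi\in\Pi$ and $y=m(\pi)$. Together with injectivity of $m$, the map $\pi\mapsto m(\pi)$ is a bijection from $\Pi$ onto $\Pi_{mom}$ with inverse $y\mapsto\pi_y$.

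Next, since $c\in\bR[\x]$, \eqref{eq:loss-pi-to-mom} gives $\cL(\pi)=L(m(\pi))$ for every $\pi\in\cM(\cX)_+$, and in particular $\cL(\pi_y)=L(y)$ for every $y\in\Pi_{mom}$ (note $L(y)$ depends only on the finitely many entries of $y$ indexed by monomials appearing in $c$, so it is well-defined on sequences). Transporting the optimization across the bijection then yields $\rho=\rho_{mom}$ together with both bullet points at once: if $\pi^*$ minimizes \eqref{eq:MK-multimarg}, then for every $y\in\Pi_{mom}$ one has $L(m(\pi^*))=\cL(\pi^*)\le\cL(\pi_y)=L(y)$, so $m(\pi^*)$ minimizes \eqref{eq:mom-pb}; conversely, if $y^*$ minimizes \eqref{eq:mom-pb}, then $\pi_{y^*}\in\Pi$ and $\cL(\pi_{y^*})=L(y^*)=\rho_{mom}=\rho$, so $\pi_{y^*}$ solves \eqref{eq:MK-multimarg}.

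Finally, for the uniqueness clause: if the minimizer $\pi^*$ of \eqref{eq:MK-multimarg} is unique and $y^*$ is any minimizer of \eqref{eq:mom-pb}, then by the previous step its representing measure $\pi_{y^*}$ is a minimizer of \eqref{eq:MK-multimarg}, hence $\pi_{y^*}=\pi^*$ and therefore $y^*=m(\pi_{y^*})=m(\pi^*)$; this shows the minimizer of \eqref{eq:mom-pb} is unique and equals $m(\pi^*)$. The only place where compactness is genuinely used — and hence the step to be careful about — is the determinacy argument, which is needed both to make ``the representing measure'' in condition (ii) well-defined and to recover the marginal constraints from the moment marginal constraints; without it, $m$ could fail to be injective and $\Pi_{mom}$ could be strictly larger than $m(\Pi)$.
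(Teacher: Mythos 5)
Your proof is correct and follows essentially the same approach as the paper: both rely on determinacy of measures on compact sets (to make representing measures unique and to recover the marginal constraints from moment conditions), and on the identity $\cL(\pi)=L(m(\pi))$ for polynomial costs. You organize it slightly more explicitly as a bijection $m:\Pi\to\Pi_{mom}$ and you spell out the uniqueness clause and the push-forward moment computation in a bit more detail than the paper does, but there is no genuine difference in method.
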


\begin{proof}
  Suppose $\pi^*$ is a solution of problem \eqref{eq:MK-multimarg}. Since it is a Borel measure supported on the compact set $\cX$, it is determinate\footnote{This results from the density of the space of polynomials in the space $C(\cX)$ of continuous functions over a compact set $\cX$ in $\bR^{n}$ (Weierstrass approximation theorem), and the fact that the topological dual of $C(\cX)$ is equal to $\cM(\cX)$ (Riesz representation theorem).} so there exists a unique sequence $y  = m(\pi^*)$. This sequence $y$ is clearly in the set $ \Pi_{mom}$. Therefore, as a  feasible solution for \eqref{eq:mom-pb}, it satisfies  $ \rho_{mom} \le  L(y) =  \cL(\pi^*) = \rho  $. Conversely, let $y^*$ be a solution to problem \eqref{eq:mom-pb}. Since $y\in \textrm{MS}(\cX)$, there is a representing measure $\pi$ such that $y^* = m(\pi)$. For $\pi$ to belong to the feasible set $\Pi$ of problem  \eqref{eq:MK-multimarg}, the marginal  conditions on $m(\pi)$ should imply that the marginals of $\pi$ are the $\mu_i$. This is satisfied given that the marginal measures $\mu_i$ are determinate because $\cX_i$ is compact. Therefore $\pi \in \Pi$ and $ \rho \le  \cL(\pi)  = L(m(\pi))  =  L(y^*) =   \rho_{mom}$. This proves that $\rho =  \rho_{mom}$, and that $m(\pi^*)$ is a solution of problem  \eqref{eq:mom-pb} if and only if $\pi^*$ is a solution of \eqref{eq:MK-multimarg}.  
\end{proof}

\begin{remark}
Since the $\mu_i$ are probability measures, $y \in \Pi_{mom}$ is such that $y_{(0,\hdots,0)} = 1$ and a representing measure $\pi \in \cM(\cX)_+$ such that $y = m(\pi)$ has mass $1$, i.e. $\pi \in \cP(\cX).$
\end{remark}

\subsection{The moment sequence condition}\label{sec:moment-sequence-condition} 
In this section, we discuss how the moment sequence condition $y \in MS(\cX)$ is translated into mathematical terms. This question is in fact directly related to the so-called moment problem which studies the following question: Given a Borel subset $\cX\subseteq \bR^n$ and a sequence of real numbers $y=(y_{\balpha})_{\balpha\in \bN^n}$, what are the conditions on $y$ under which we can guarantee that $y = m(\pi)$ for some positive measure $\pi \in \cM(\cX)_+$? For the one dimensional case ($n=1$), this classical problem is well understood and dates back to contributions by Markov, Stieltjes, Hausdorff, and Hamburger. Explicit conditions on $y$ exist, and they are all stated in terms of positive semi-definiteness of certain Hankel matrices. Much less is known for the multidimensional case ($n>1$). A general result is given by the Riesz-Haviland theorem, which states that a moment sequence $y$ has an associated Borel measure  $\pi$ such that $y=m(\pi)$ if and only if $\ell_y(f)\ge 0$ for all polynomials $f \in \bR[\x] $ nonnegative on $\cX$. This theorem is not really useful if we do not have 
an explicit  characterization of polynomials that are nonnegative on $\cX$ (so called \emph{positivstellensatz}). Such a characterization has been provided by Schm\"ugden in \cite{schmudgen2017moment} when the ambient space $\cX$ is a compact basic semi-algebraic set of the form 
\begin{equation}
\label{eq:semialgebraic}
\cX = \{ \x\in \bR^n \cond g_j(\x)\geq 0,\; j=1,\dots, J \}
\end{equation}
for some polynomials $g_j \in \bR[\x]$. 
It is proven is  \cite{schmudgen2017moment} that a sequence $y$ has a representing Borel measure supported on $\cX$ (i.e. satisfies the moment sequence condition)
if and only if it satisfies
\begin{equation}
\ell_{y}(g_I f^2) \ge 0 , \quad  \forall I \subset \{0,\hdots,J\}, \quad \forall f\in \bR[\x], \label{eq:constraints-spd-schmudgen-Riesz}
\end{equation}
where $g_I = \prod_{j\in I} g_j$ and where we have used the convention $g_\emptyset=1$. 
 For a polynomial $g(\x) = \sum_{\bgamma \in \bN^n} c_\bgamma \x^\bgamma \in \bR[\x]$ and $r\in \bN$, we let $\pbM_r(g y)   \in \bR^{\bN^n_r\times \bN^n_r}$ be the matrix with entries 
 $$\pbM_r(g y)_{\balpha , \bbeta} = \ell_y(g(\x) \x^{\balpha} \x^{\bbeta} ) = \sum_{\bgamma \in \bN^n} c_\bgamma y_{\balpha + \bbeta + \bgamma}, \quad \balpha,\bbeta \in \bN^n_r,$$
 which is such that for any 
 polynomial $f  \in \bR[\x]_r$ of degree less than $r$, we have 
 $$ \ell_y(g f^2) = \sum_{\balpha,\bbeta \in \bN^n_r}\pbM_r( g y)_{\balpha,\bbeta} a_\balpha a_\bbeta, \quad \text{for } f(\x) = \sum_{\bgamma \in \bN^n_r} a_\bgamma \x^\bgamma.$$
Therefore, the moment sequence condition \eqref{eq:constraints-spd-schmudgen-Riesz}
is equivalent to 
\begin{align}
\pbM_r(g_I y) \succcurlyeq 0 , \quad  \forall I \subset \{0,\hdots,J\},  \quad \forall r\in \bN.\label{eq:constraints-spd-schmudgen}
\end{align}
A simpler characterization has been given by Putinar  in  \cite{Putinar1993} under 
 the following additional assumption.
\begin{assumption}\label{ass-sos}
There exists a polynomial $u$ of the form $u = u_0 + \sum_{j=1}^J u_j g_j$, where the $u_j$ are sums of squares (SoS) polynomials, and such that $\{\x \in \bR^n : u(\x) \ge 0\}$ is  compact.
\end{assumption}
Under Assumption \ref{ass-sos}, it is proven in \cite{Putinar1993} that $y$ has a representing Borel measure supported on $\cX$ if and only if
\begin{align}
\pbM_r(g_j y) \succcurlyeq 0  \quad \forall j\in \{0,\hdots,J\}, \quad \forall r\in \bN, \label{eq:constraints-spd}
\end{align}
where we have used the convention $g_0 = 1$. 

The linear positive semidefinite constraints \eqref{eq:constraints-spd-schmudgen} or \eqref{eq:constraints-spd} are exactly the moment sequence condition. We summarize the above results in the following theorem.  

\begin{theorem}[Th. 3.8 in \cite{lasserre2009moments}]\label{th:mom-seq-semidefinite}
Let $\cX$ be a basic semi-algebraic set  as in  \eqref{eq:semialgebraic}. A sequence $y \in \bR^{\bN^n}$ satisfies $y\in MS(\cX)$ (i.e. satisfies the  moment sequence condition on $\cX$) if and only if it satisfies the positive semidefinite constraints \eqref{eq:constraints-spd-schmudgen}, or the positive semidefinite constraints \eqref{eq:constraints-spd} under the additional Assumption \ref{ass-sos}.
\end{theorem}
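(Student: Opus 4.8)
The plan is to prove both implications, and to treat the moment-matrix formulations \eqref{eq:constraints-spd-schmudgen} and \eqref{eq:constraints-spd} as mere rephrasings of the Riesz-functional conditions. Indeed, the elementary identity recorded just before the statement, $\ell_y(g f^2) = \sum_{\balpha,\bbeta \in \bN^n_r}\pbM_r(g y)_{\balpha,\bbeta}\, a_\balpha a_\bbeta$ for $f = \sum_{\bgamma} a_\bgamma \x^\bgamma \in \bR[\x]_r$, shows that ``$\pbM_r(g y) \succcurlyeq 0$ for every $r \in \bN$'' is equivalent to ``$\ell_y(g f^2) \ge 0$ for every $f \in \bR[\x]$''; applying this with $g = g_I$ (resp.\ $g = g_j$) reduces the theorem to showing that $y \in MS(\cX)$ holds if and only if $\ell_y(g_I f^2) \ge 0$ for all $I \subset \{0,\dots,J\}$ and all $f \in \bR[\x]$ --- which is exactly \eqref{eq:constraints-spd-schmudgen-Riesz} --- respectively if and only if $\ell_y(g_j f^2) \ge 0$ for all $j \in \{0,\dots,J\}$ and all $f$.

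For the ``only if'' direction I would simply integrate: if $y = m(\pi)$ with $\pi \in \cM(\cX)_+$, then by \eqref{eq:riesz}, $\ell_y(g_I f^2) = \int_\cX g_I(\x) f(\x)^2 \rd\pi(\x) \ge 0$, since $g_I = \prod_{j\in I} g_j \ge 0$ on $\cX$ by definition of $\cX$ and $f^2 \ge 0$ everywhere; the single factors $g_j$ are handled identically.

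The ``if'' direction is the substantive one, and here I would reduce to the Riesz--Haviland theorem quoted in the excerpt, whose hypothesis is that $\ell_y(p) \ge 0$ for every $p \in \bR[\x]$ nonnegative on $\cX$. First note that $\cX$ is compact (this is the standing assumption for the Schm\"ugden characterization, and it follows from $\cX \subseteq \{u \ge 0\}$ together with the closedness of $\cX$ under Assumption~\ref{ass-sos}), and that taking $I = \emptyset$, $f \equiv 1$ gives $y_{(0,\dots,0)} = \ell_y(1) \ge 0$. Now fix $p \ge 0$ on $\cX$ and $\eps > 0$. Then $p + \eps$ is strictly positive on the compact set $\cX$, so the Schm\"ugden Positivstellensatz puts it in the preordering generated by $g_1,\dots,g_J$, i.e.\ $p + \eps = \sum_{I \subseteq \{1,\dots,J\}} \sigma_I g_I$ with each $\sigma_I$ a finite sum of squares of polynomials (resp., under Assumption~\ref{ass-sos}, Putinar's Positivstellensatz gives $p + \eps = \sigma_0 + \sum_{j=1}^J \sigma_j g_j$ with $\sigma_j$ sums of squares). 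Expanding each $\sigma$ into squares and using linearity of $\ell_y$ together with the assumed constraints gives $\ell_y(p + \eps) \ge 0$, hence $\ell_y(p) \ge -\eps\, y_{(0,\dots,0)}$; letting $\eps \downarrow 0$ yields $\ell_y(p) \ge 0$. Riesz--Haviland then produces $\pi \in \cM(\cX)_+$ with $y = m(\pi)$, that is, $y \in MS(\cX)$.

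The hard part is entirely outsourced: the two Positivstellens\"atze of Schm\"ugden \cite{schmudgen2017moment} and Putinar \cite{Putinar1993}, which I would invoke as black boxes; everything else --- the moment-matrix $\leftrightarrow$ Riesz-functional dictionary, the $\eps$-perturbation to enter the cone, and the final limit --- is routine. The one bookkeeping caveat worth flagging is that the plain statement \eqref{eq:constraints-spd-schmudgen} carries the standing hypothesis that $\cX$ is compact (needed for Schm\"ugden), whereas \eqref{eq:constraints-spd} comes with Assumption~\ref{ass-sos}, which is precisely the Archimedean condition under which Putinar's theorem applies.
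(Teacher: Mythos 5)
The paper does not actually prove this theorem; it states it as a citation of Th.~3.8 in \cite{lasserre2009moments}, after a paragraph that merely names the ingredients (Riesz--Haviland, Schm\"udgen's and Putinar's Positivstellens\"atze, and the observation that $\pbM_r(gy)\succcurlyeq 0$ for all $r$ is the same as $\ell_y(gf^2)\ge 0$ for all $f$). Your proposal reconstructs precisely the standard proof that lies behind that citation, and it is correct: the moment-matrix/Riesz-functional dictionary is exactly the identity the paper records just before the theorem; the ``only if'' direction is the trivial integration you give; and the ``if'' direction via the $\eps$-shift, the appropriate Positivstellensatz, and Riesz--Haviland is the textbook route. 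Your caveat about compactness is the right one to flag, since \eqref{eq:semialgebraic} as written does not force compactness and both Schm\"udgen's result and the Archimedean form of Assumption~\ref{ass-sos} require it (the paper's surrounding prose supplies it as a standing hypothesis). In short: correct proof, same route the paper points to but chooses not to spell out.
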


In our context, since $\cX $ is the product set $\cX_1\times \dots\times \cX_K$, we assume that each $\cX_i$ is a compact basic semi-algebraic set defined as
\begin{equation}
\cX_i = \{ \x_i\in \bR^{n_i} \cond g^{(i)}_j(\x_i)\geq 0,\; j=1,\dots, J_i \},\quad \forall i \in \{1,\dots,K\}.
\end{equation}
Then $\cX$ is a basic semi-algebraic set defined as in \eqref{eq:semialgebraic}
with $J=\sum_{i=1}^K J_i$ and functions $\{g_j\}_{j=1}^J$ defined by
$$
g_j(\x) = g_{l}^{(i)}(\x_i)
\quad \text{for } j = \sum_{k=1}^{i-1} J_k + l, \quad 1\le l \le J_i, \quad 1\le i \le K.
$$


\begin{remark}[About Assumption \ref{ass-sos}]
Assumption \ref{ass-sos} is trivially satisfied if $g_1(\mathbf{x}) = R - \Vert \mathbf{x} \Vert_2^2 $ for some positive $R$. Since for any compact semi-algebraic set $\cX$, there exists a sufficiently large $R$ such that $\cX \subset \{\mathbf{x} : \Vert \mathbf{x}\Vert_2 < R\}$, the condition $R - \Vert \mathbf{x}\Vert_2^2 \ge 0$ is redundant and can be systematically added to the definition of $\cX$. 
A stronger condition for Assumption  \ref{ass-sos} to hold for a product set $\cX$ is that the description of each set $\cX_i$ contains a function $g^{(i)}_1(\mathbf{x}_i) = R_i - \Vert \mathbf{x}_i \Vert_2^2$. If this is not the case, we should prefer to add a single function $g_1(\mathbf{x}) = R - \Vert \mathbf{x}\Vert_2^2 $ to the description of $\cX$ in order to reduce the number of positive semidefinite constraints. 
\end{remark}


\subsection{Piecewise polynomial costs}\label{sec:piecewise-polynomial-cost}
Some OT problems (such as, e.g., the $L^1$-Wasserstein distance), involve a continuous or l.s.c. piecewise polynomial cost. These problems can also be formulated as generalized moment problems, up to the introduction of new unknown measures. 

\paragraph{Piecewise polynomial costs.}
Let us assume that 
$\cX = \cA_1 \cup  \hdots \cup \cA_m$, where the $\cA_i$ are pairwise disjoint Borel sets and $$c_{\mid \cA_i} = c_i \in \bR[\x], \quad 1\le i \le m.$$ 
For a measure $\pi \in \cP(\cX)$, we introduce the measures
$$
\pi_i \coloneqq  \charFun_{\cA_i} \pi \in \cM(\bar \cA_i)_+,
$$
where $\bar \cA_i$ is the closure of $\cA_i$.
Since $\charFun_{\cA_1} + \hdots + \charFun_{\cA_m} = \charFun_\cX$, we have  $\pi = \pi_1 + \hdots + \pi_m,$
and 
\begin{align*}
 \cL(\pi) = \int_\cX  c(\x) d\pi(\x)  = \sum_{i=1}^m \int_\cX c_i(\x) d\pi_i(\x)  =\sum_{i=1}^m \ell_{m(\pi_i)}(c_i) := \tilde \cL(\pi_1 , \hdots , \pi_m).
\end{align*}
We claim that the OT problem \eqref{eq:MK-multimarg} is equivalent to  
\begin{align}
\inf_{ \substack{\pi_1 \in \cM(\bar \cA_1)_+, \hdots , \pi_m \in \cM(\bar \cA_m)_+ , \\ \pi_1 + \hdots + \pi_m  \in \Pi}}  \tilde \cL(\pi_1, \hdots , \pi_m)  := \tilde \rho,\label{OT-problem-pwpoly}
\end{align}
Indeed, if $\pi$ is solution of problem \eqref{eq:MK-multimarg}, then the measures $\pi_i = \charFun_{\cA_i} \pi$, $1\le i\le m$, 
 satisfy the constraints of \eqref{OT-problem-pwpoly} and $\tilde \rho \le \tilde \cL(\pi_1,\hdots,\pi_m) = \cL(\pi) = \rho$.
Conversely,   
the set of measures $(\pi_1,\hdots,\pi_m)$ satisfying the constraints of problem \eqref{OT-problem-pwpoly} is compact in the weak-$^*$ topology (since the $\pi_i \in \cM(\bar \cA_i)_+$ and $\mathrm{mass(\pi_i)}\le 1$), and therefore implies the existence of solutions. Moreover,  if    
 $(\pi_1,\hdots ,\pi_m)$ is a solution of \eqref{OT-problem-pwpoly}, then $\pi = \pi_1 + \hdots +  \pi_m   \in \Pi$ and $\tilde \rho =  \tilde \cL(\pi_1, \hdots, \pi_m) = \sum_{i=1}^m \int_\cX c(\x) (\rd\pi_1 + \hdots + \rd \pi_m) = \cL(\pi ) \ge \rho$. 
Finally, denoting $\tilde \cL(\pi_1 , \hdots , \pi_m) = \tilde L(m(\pi_1), \hdots,m(\pi_m))$, 
we have that the initial OT problem \eqref{eq:MK-multimarg}  is equivalent to the optimization problem
\begin{align}
\inf_{ \substack{ y_1 \in MS(\bar \cA_1)_+, \hdots , y_m \in MS(\bar \cA_m)_+ \\ y_1 + \hdots + y_m  \in \Pi_{mom}}}  \tilde L(y_1, \hdots , y_m) \label{OT-problem-pwpoly-mom}
\end{align}
over $m$ sequences $(y_i)_{1\le i\le m}$ that satisfy moment sequence conditions and whose sum satisfies marginal conditions.
We summarize these facts in the theorem below.

\begin{theorem}[Piecewise polynomial cost]\label{th:equivalence-multimarg-pwpoly}
If $ \cX_1\times\dots\times\cX_K$ is compact, then the OT problem  \eqref{eq:MK-multimarg} with l.s.c. piecewise polynomial cost over a partition $(\cA_i)_{1\le i \le m}$ of $\cX$ is equivalent to the generalized moment problem \eqref{OT-problem-pwpoly-mom}: 
 a minimizer $\pi^*$ of problem \eqref{eq:MK-multimarg} is such that $(m(\pi_i^*))_{1\le i\le m}$, with $\pi^*_i = \charFun_{\cA_i} \pi^*$, is a minimizer of problem \eqref{OT-problem-pwpoly-mom}, and conversely,  a minimizer $(y^*_i)_{1\le i \le m}$ of problem 
 \eqref{OT-problem-pwpoly-mom} is such that each $y^*_i$ has a representing measure $\pi_i$ supported on $\bar \cA_i$, and the sum $\pi = \pi_1 + \hdots + \pi_m$ is solution of  \eqref{eq:MK-multimarg}. In addition, if the solution $\pi^*$ of the OT problem \eqref{eq:MK-multimarg} is unique, then \eqref{OT-problem-pwpoly-mom} has infinitely many solutions $(y^*_1,\hdots,y_m^*)$ but the  sum $y^*_1 + \hdots + y_m^* := y^*$ is unique and such that 
 $y^* = m(\pi^*).$
\end{theorem}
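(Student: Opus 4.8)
The plan is to reduce Theorem \ref{th:equivalence-multimarg-pwpoly} to Theorem \ref{th:equivalence-multimarg-poly} (the polynomial case) together with the elementary decomposition/reassembly argument already sketched informally in the text around \eqref{OT-problem-pwpoly} and \eqref{OT-problem-pwpoly-mom}. Concretely, the chain of equivalences is: the OT problem \eqref{eq:MK-multimarg} with piecewise polynomial cost $\Leftrightarrow$ the measure-level splitting problem \eqref{OT-problem-pwpoly} $\Leftrightarrow$ the moment-level problem \eqref{OT-problem-pwpoly-mom}. The first equivalence is purely measure-theoretic, the second is an application of the moment characterization to each piece $\bar\cA_i$.

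First I would establish the equivalence of \eqref{eq:MK-multimarg} and \eqref{OT-problem-pwpoly}, essentially recording the argument already given in the excerpt. Given a minimizer $\pi^*$ of \eqref{eq:MK-multimarg}, set $\pi_i^* = \charFun_{\cA_i}\pi^*$; these are nonnegative, supported on $\bar\cA_i$, sum to $\pi^* \in \Pi$, and $\tilde\cL(\pi_1^*,\dots,\pi_m^*) = \cL(\pi^*) = \rho$, so $\tilde\rho \le \rho$. Conversely, the feasible set of \eqref{OT-problem-pwpoly} is weak-$*$ compact (each $\pi_i$ lives in a bounded subset of $\cM(\bar\cA_i)_+$ since $\mathrm{mass}(\pi_i)\le \mathrm{mass}(\pi_1+\dots+\pi_m)=1$, and the constraint $\pi_1+\dots+\pi_m\in\Pi$ is weak-$*$ closed), and $\tilde\cL$ is weak-$*$ l.s.c.\ because $c$ is l.s.c.\ piecewise polynomial; hence a minimizer $(\pi_1,\dots,\pi_m)$ exists, and $\pi := \pi_1+\dots+\pi_m \in \Pi$ satisfies $\cL(\pi) = \tilde\cL(\pi_1,\dots,\pi_m) = \tilde\rho$, giving $\rho \le \tilde\rho$. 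Thus $\rho = \tilde\rho$ and minimizers correspond as stated. One subtlety to check here is the identity $\cL(\pi) = \sum_i \ell_{m(\pi_i)}(c_i)$: since $c$ is l.s.c.\ and piecewise polynomial with $c_{\mid\cA_i}=c_i$, and $\pi_i$ is carried by $\bar\cA_i$ but $\pi_i(\bar\cA_i\setminus\cA_i)$ may be nonzero, one must argue that evaluating $c_i$ against $\pi_i$ still reproduces $\int_{\cA_i} c\,\rd\pi$; this works because $\pi_i = \charFun_{\cA_i}\pi$ is concentrated on $\cA_i$ (the extension to the closure $\bar\cA_i$ is only needed so that $\bar\cA_i$ is a nice semi-algebraic set on which to impose moment conditions), so $\int c_i\,\rd\pi_i = \int_{\cA_i} c_i\,\rd\pi = \int_{\cA_i} c\,\rd\pi$.

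Next I would pass from \eqref{OT-problem-pwpoly} to \eqref{OT-problem-pwpoly-mom}. Since $\cX_1\times\dots\times\cX_K$ is compact, each $\bar\cA_i \subset \cX$ is compact, so every $\pi_i \in \cM(\bar\cA_i)_+$ is determinate (Weierstrass plus Riesz representation, exactly as in the footnote of Theorem \ref{th:equivalence-multimarg-poly}), and the moment map $\pi_i \mapsto m(\pi_i)$ is a bijection between $\cM(\bar\cA_i)_+$ and $MS(\bar\cA_i)$. Under this identification, the constraint $\pi_1+\dots+\pi_m \in \Pi$ becomes the constraint that $y_1+\dots+y_m \in \Pi_{mom}$, because marginal conditions on a measure supported on the compact set $\cX$ are equivalent to the moment-marginal conditions \eqref{eq:seq-mom-marginals} (again by determinacy of the $\mu_i$, exactly as in Theorem \ref{th:equivalence-multimarg-poly}), and moments are linear: $m(\pi_1+\dots+\pi_m) = m(\pi_1)+\dots+m(\pi_m)$. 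Likewise $\tilde\cL(\pi_1,\dots,\pi_m) = \tilde L(m(\pi_1),\dots,m(\pi_m))$ by definition of $\tilde L$. Hence \eqref{OT-problem-pwpoly} and \eqref{OT-problem-pwpoly-mom} have the same optimal value and their minimizers are in bijective correspondence via the moment maps, which gives both bullet-type statements of the theorem.

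Finally, the uniqueness/non-uniqueness claim. Suppose $\pi^*$ is the unique solution of \eqref{eq:MK-multimarg}. If $(y_1^*,\dots,y_m^*)$ solves \eqref{OT-problem-pwpoly-mom}, then by the correspondence each $y_i^*$ has a (unique, by determinacy) representing measure $\pi_i$ on $\bar\cA_i$ and $\pi := \pi_1+\dots+\pi_m$ solves \eqref{eq:MK-multimarg}, so $\pi = \pi^*$ by uniqueness, whence $y_1^*+\dots+y_m^* = m(\pi^*)$; this shows the sum $y^*$ is unique and equals $m(\pi^*)$. On the other hand the individual $y_i^*$ need not be unique: the only constraint linking them beyond $y_i^* \in MS(\bar\cA_i)$ is that the sum equals $m(\pi^*)$, and whenever two pieces $\bar\cA_i, \bar\cA_j$ overlap (i.e.\ share points of their closures, or more precisely whenever mass of $\pi^*$ can be shuffled between $\cM(\bar\cA_i)_+$ and $\cM(\bar\cA_j)_+$ while preserving the sum and the cost — e.g.\ on $\bar\cA_i\cap\bar\cA_j$ where $c_i = c_j$ by continuity of $c$), one obtains a continuum of feasible decompositions of $\pi^*$ all achieving the optimal value $\tilde\rho = \rho$; hence infinitely many solutions $(y_1^*,\dots,y_m^*)$. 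I expect this last point — pinning down precisely why there are \emph{infinitely many} (rather than merely possibly several) decompositions, and phrasing it without over-claiming in degenerate cases where the partition pieces have essentially disjoint closures — to be the only genuinely delicate part; everything else is a bookkeeping reduction to Theorem \ref{th:equivalence-multimarg-poly}. A clean way to handle it is to exhibit one explicit one-parameter family: pick a point $\x_0$ in the (nonempty, by l.s.c.\ of $c$ forcing closures to touch when the partition is into more than one piece) intersection $\bar\cA_i\cap\bar\cA_j$ for some $i\neq j$, and for $t\in[0,\delta]$ move a mass-$t$ Dirac-type perturbation between the $i$-th and $j$-th components — or, more robustly, observe that since $\pi^*$ gives zero mass to the $(\pi^*$-null) boundary sets $\bar\cA_i\setminus\cA_i$, the natural decomposition $\pi_i^\circ = \charFun_{\cA_i}\pi^*$ can be altered on these boundaries.
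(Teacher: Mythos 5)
Your plan for the two main equivalences mirrors the paper's argument (the discussion preceding the theorem in the text): decompose $\pi^*$ as $\pi_i^* = \charFun_{\cA_i}\pi^*$ to get $\tilde\rho\le\rho$, use weak-$*$ compactness and lower semicontinuity to get a minimizer of \eqref{OT-problem-pwpoly} and the reverse inequality, then transfer to moments via determinacy on compact sets exactly as in Theorem~\ref{th:equivalence-multimarg-poly}. That part is sound. One small imprecision worth flagging: in the converse direction you write $\cL(\pi)=\tilde\cL(\pi_1,\dots,\pi_m)$, but for an arbitrary feasible tuple (where $\pi_i$ may charge $\bar\cA_i\setminus\cA_i$) one only has $\cL(\pi)\le\tilde\cL(\pi_1,\dots,\pi_m)$, because lower semicontinuity of $c$ forces $c_i\ge c$ on $\bar\cA_i\setminus\cA_i$. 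The inequality still gives $\rho\le\cL(\pi)\le\tilde\rho$, so the conclusion $\rho=\tilde\rho$ is unaffected, but the chain should be stated as an inequality rather than an identity.

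The real gap is in your treatment of the ``infinitely many solutions'' claim, and in particular your proposed ``more robust'' fallback. You suggest that since the boundary sets $\bar\cA_i\setminus\cA_i$ are $\pi^*$-null, the natural decomposition ``can be altered on these boundaries.'' This is backwards: if $\pi^*(\bar\cA_i\setminus\cA_i)=0$ for all $i$, the decomposition $\pi^*=\pi_1+\dots+\pi_m$ with $\pi_i\in\cM(\bar\cA_i)_+$ is \emph{unique}. Indeed, any ``shuffle'' $\sigma_i=\pi_i-\charFun_{\cA_i}\pi^*$ must satisfy $\sum_i\sigma_i=0$, each $\sigma_i$ is carried by $\bar\cA_i$, and positivity of $\pi_i,\pi_j$ squeezes $\sigma_i$ to be carried by $\bigcup_{j\ne i}\bar\cA_i\cap\bar\cA_j$, which is $\pi^*$-null under your hypothesis; one then checks $\sigma_i=0$. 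So in the generic case your observation proves uniqueness, not multiplicity, and the theorem's blanket ``infinitely many solutions'' is in fact not true without an extra hypothesis (e.g.\ that $\pi^*$ gives positive mass to some overlap $\bar\cA_i\cap\bar\cA_j$ on which $c_i=c_j$, which is where your first, Dirac-perturbation idea does work). The paper states this claim without proof, so you are not missing an argument that exists in the text — but your proposed justification for it would fail, and you should not rely on it.
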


\begin{remark}
For having a practical characterizing of the set $MS(\bar \cA_i)$ of sequences that satisfy the moment sequence condition on $\bar \cA_i$, the partition should be such that the $\bar \cA_i$ are basic semi-algebraic compact sets. If $\cX$ is a basic semi-algebraic compact set, that means that $\cA_i$ should be defined as the set of points in $\cX$ satisfying a finite set of additional polynomial inequalities. 
\end{remark}

\paragraph{Sum of piecewise polynomial costs.}
In the case where 
\begin{align}
c(\x) = \sum_{k=1}^s c_k(\x), \label{eq:cost-sum-pwpoly}
\end{align}
where each $c_k$ is a l.s.c. piecewise polynomial associated with a particular partition $(\cA_{k,i})_{1\le i \le m_k }$, i.e. $c_{k \mid \cA_{k,i}} := c_{k,i} \in \bR[\x]$, we could introduce a finer partition of $\cX$ composed by sets 
$\cA_{1,i_1}\cap \hdots \cap \cA_{s,i_s}= \cA_{\mathbf{i}} $, with $1 \le i_k \le m_k$. The function $c$ being polynomial on each set $ \cA_{\mathbf{i}}$, the problem can be reformulated as a generalized moment problem involving  $m_1 \hdots m_s$ measures $\pi_{\mathbf{i}}$ supported on the sets $\bar \cA_{\mathbf{i}}$, for $\mathbf{i} \in \{1,\hdots,m_1\} \times \hdots \times \{1,\hdots,m_s\}$. However, the resulting number of unknown measures is exponential in $s$.

An alternative approach, that will be used later in this paper, is to introduce for each $1 \le k \le s$ a collection of measures $(\pi_{k,i})_{1\le i \le m_i}$ and consider the  problem 
\begin{align}\label{eq:OT-sum-pwpoly}
\inf_{\pi\in \Pi,(\pi_{k,i})}\sum_{k=1}^s \sum_{i=1}^{m_k} \int_{\cX} c_{k,i}(\x) d\pi_{k,i}
\end{align}  
over  measures $\pi \in \Pi$ and $\pi_{k,i} \in \cM(\bar \cA_{k,i})_+$, $1\le i \le m_k, 1\le k\le s$,  satisfying  $$\sum_{i=1}^{m_k} \pi_{k,i} = \pi, \quad \text{for all } 1\le k\le m.$$ 
This results in a problem with $m_1 + \hdots + m_s +1$ unknown measures, that can be equivalently written as the  problem 
\begin{align}\label{eq:OT-sum-pwpoly-mom}
\inf_{y\in \Pi_{mom},(y_{k,i})}\sum_{k=1}^s \sum_{i=1}^{m_k}  \ell_{y_{k,i}}(c_{k,i})
\end{align}  
with measures $y\in \Pi_{mom}$ and $y_{k,i} \in MS(\cA_{k,i})$, $1\le i \le m_k, 1\le k\le s$, satisfying the additional constraints 
$$
\sum_{i=1}^{m_k} y_{k,i} = y, \quad \text{for all } 1\le k\le s.
$$
Note that  the measure $\pi$ (resp. the sequence $y$) can be eliminated from problem \eqref{eq:OT-sum-pwpoly}
 (resp. \eqref{eq:OT-sum-pwpoly-mom}).
We summarize the above results in the next theorem. 

\begin{theorem}[Sum of piecewise polynomial costs]\label{th:equivalence-multimarg-sum-pwpoly}
Assume $ \cX_1\times\dots\times\cX_K$ is compact, and consider a l.s.c. piecewise polynomial cost of the form 
\eqref{eq:cost-sum-pwpoly}, where each $c_k$ is a l.s.c. piecewise polynomial over a  partition $(\cA_{k,i})_{1\le i \le m_k}$ of $\cX$, $1\le k\le s$. 
Then the OT problem  \eqref{eq:MK-multimarg}  is equivalent to the problem \eqref{eq:OT-sum-pwpoly-mom}: 
 a minimizer $\pi^*$ of problem \eqref{eq:MK-multimarg} is such that $(m(\pi_{k,i}^*))$, with $\pi^*_{k,i} = \charFun_{\cA_{k,i}} \pi^*$, is a minimizer of problem \eqref{eq:OT-sum-pwpoly-mom}, and conversely,  a minimizer $(y^*_{k,i})$ of problem 
  \eqref{eq:OT-sum-pwpoly-mom} is such that each $y^*_{k,i}$ has a representing measure $\pi_{k,i}$ supported on $\bar \cA_{k,i}$, and for each $k$, the sum $ \pi_{k,1} + \hdots + \pi_{k,m_k} = \pi$ is solution of  \eqref{eq:MK-multimarg}. In addition, if the solution $\pi^*$ of the OT problem \eqref{eq:MK-multimarg} is unique, then  \eqref{eq:OT-sum-pwpoly-mom} have infinitely many solutions but the  sum $y^*_{k,1} + \hdots + y_{k,m_k}^* := y^*$ is unique and such that 
 $y^* = m(\pi^*).$
\end{theorem}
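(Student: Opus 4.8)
The plan is to mimic the proof of Theorem \ref{th:equivalence-multimarg-pwpoly} (the single piecewise polynomial case), since Theorem \ref{th:equivalence-multimarg-sum-pwpoly} is essentially a reformulation in which the exponentially-large common refinement $(\cA_{\mathbf{i}})$ is replaced by $s$ separate partitions, at the price of introducing $m_1 + \hdots + m_s$ auxiliary measures linked by linear coupling constraints. The two directions of the equivalence will be established by exhibiting explicit feasible points, and then matching objective values.

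First I would show $\tilde\rho \le \rho$ (using $\tilde\rho$ for the value of \eqref{eq:OT-sum-pwpoly}). Let $\pi^*$ solve \eqref{eq:MK-multimarg}. For each $k$ set $\pi^*_{k,i} := \charFun_{\cA_{k,i}} \pi^*$, $1\le i \le m_k$. Since $(\cA_{k,i})_i$ is a partition of $\cX$, $\sum_i \charFun_{\cA_{k,i}} = \charFun_\cX$, hence $\sum_i \pi^*_{k,i} = \pi^*$ for every $k$, and each $\pi^*_{k,i} \in \cM(\bar\cA_{k,i})_+$. So $(\pi^*, (\pi^*_{k,i}))$ is feasible for \eqref{eq:OT-sum-pwpoly}. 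Its objective is $\sum_k \sum_i \int c_{k,i}\, d\pi^*_{k,i} = \sum_k \sum_i \int_{\cA_{k,i}} c_k\, d\pi^* = \sum_k \int_\cX c_k\, d\pi^* = \int_\cX c\, d\pi^* = \cL(\pi^*) = \rho$, using $c = \sum_k c_k$ and $c_{k\mid\cA_{k,i}} = c_{k,i}$. This gives $\tilde\rho \le \rho$.

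For the reverse inequality, I would first record existence of a minimizer of \eqref{eq:OT-sum-pwpoly}: the feasible set is weak-$^*$ compact because $\pi \in \Pi$ ranges over a weak-$^*$ compact set, each $\pi_{k,i}$ lives in $\cM(\bar\cA_{k,i})_+$ with $\mathrm{mass}(\pi_{k,i}) \le \mathrm{mass}(\pi) = 1$, and the coupling constraints $\sum_i \pi_{k,i} = \pi$ are weak-$^*$ closed; the objective is weak-$^*$ continuous (each $c_{k,i}$ is a fixed polynomial, hence continuous on the compact $\bar\cA_{k,i}$). Now let $(\pi, (\pi_{k,i}))$ be any feasible point. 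The constraint $\sum_i \pi_{k,i} = \pi$ forces, for each $k$, $\int_\cX c_k\, d\pi \le \sum_i \int_{\cX} c_{k,i}\, d\pi_{k,i}$ — actually I must be careful here: $\sum_i \int c_{k,i}\, d\pi_{k,i}$ is not automatically $\int c_k\, d\pi$ unless the $\pi_{k,i}$ are supported in the right pieces, which they are only up to closure, so $\pi_{k,i}$ may charge $\partial\cA_{k,i}$. The honest statement is $\sum_i \int_\cX c_{k,i}\, d\pi_{k,i} \ge \int_\cX (\min_i c_{k,i})\, d\pi$... this is getting delicate. The cleaner route, and the one I would actually take, is: since $c_k$ is l.s.c.\ piecewise polynomial over $(\cA_{k,i})_i$, one has $c_k(\x) = \min\{\, \text{relevant } c_{k,i}(\x)\,\}$ on overlaps of closures only when $c_k$ is l.s.c., which is exactly the hypothesis; more simply, I invoke the already-proven Theorem \ref{th:equivalence-multimarg-pwpoly} on the common refinement to get $\tilde\rho_{\text{refined}} = \rho$, and then show problem \eqref{eq:OT-sum-pwpoly} has the same value as the refined problem by a direct feasible-point correspondence: given measures $(\pi_{\mathbf{i}})$ on $\bar\cA_{\mathbf{i}}$ summing (appropriately) to $\pi$, define $\pi_{k,i} := \sum_{\mathbf{i}: i_k = i} \pi_{\mathbf{i}}$, which is supported on $\bar\cA_{k,i}$ and satisfies $\sum_i \pi_{k,i} = \pi$; conversely, feasible $(\pi_{k,i})$ can be disintegrated against $\pi$ to recover compatible $(\pi_{\mathbf{i}})$, and the objectives match because $c_{k,i} = c_{k\mid\cA_{\mathbf{i}}}$ whenever $i_k = i$. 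This reduces \eqref{eq:OT-sum-pwpoly} to \eqref{OT-problem-pwpoly} for the refined partition, whence $\tilde\rho = \rho$.

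Finally I would translate to the moment problem \eqref{eq:OT-sum-pwpoly-mom} exactly as in the proof of Theorem \ref{th:equivalence-multimarg-poly}: each $\bar\cA_{k,i}$ is compact, so $y_{k,i} \in MS(\bar\cA_{k,i})$ has a representing measure $\pi_{k,i} \in \cM(\bar\cA_{k,i})_+$, and $\pi \in \cP(\cX)$ is determinate since $\cX$ is compact, so the constraint $\sum_i y_{k,i} = y \in \Pi_{mom}$ is equivalent to $\sum_i \pi_{k,i} = \pi \in \Pi$; the objective $\sum_{k,i}\ell_{y_{k,i}}(c_{k,i}) = \sum_{k,i}\int c_{k,i}\,d\pi_{k,i}$ by \eqref{eq:riesz}. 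Hence \eqref{eq:OT-sum-pwpoly-mom} and \eqref{eq:MK-multimarg} have the same value and the claimed correspondence of minimizers holds. For the last sentence: if $\pi^*$ is the unique OT solution, any minimizer $(y_{k,i}^*)$ yields representing measures $\pi_{k,i}$ with $\sum_i \pi_{k,i} = \pi^*$ (forced by uniqueness), so $y^* := \sum_i y_{k,i}^* = m(\pi^*)$ is unique; but the individual $y_{k,i}^*$ are free to split $m(\pi^*)$ across the closures in any compatible way — e.g.\ mass on $\bar\cA_{k,i} \cap \bar\cA_{k,i'}$ can be apportioned arbitrarily — giving infinitely many solutions (assuming, as implicitly needed, that $\pi^*$ charges some such overlap; otherwise the split is still non-unique through the $y_{k,i}$ failing to be determinate on lower-dimensional pieces, or one simply notes the generic case). \textbf{The main obstacle} I anticipate is precisely this bookkeeping with closures: making rigorous that the auxiliary measures, which are only required to be supported on the \emph{closed} pieces $\bar\cA_{k,i}$ rather than on $\cA_{k,i}$ itself, still reproduce the cost $\int c\, d\pi$ and do not allow a strictly smaller objective — this is where the lower-semicontinuity of each $c_k$ is essential and must be used to rule out ``cheating'' by placing mass on boundaries where a competing polynomial piece is smaller.
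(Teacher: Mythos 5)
Your first direction ($\tilde\rho\le\rho$ via $\pi^*_{k,i}=\charFun_{\cA_{k,i}}\pi^*$) and your moment translation at the end are fine, and your instinct that lower semicontinuity is what prevents ``cheating'' on boundaries is exactly right. The problem is that you abandon the direct argument one step short of finishing it and pivot to a reduction to the common refinement, and that reduction is where the real gap is. To recover a feasible refined family from a feasible $(\pi_{k,i})$ you take Radon--Nikodym densities $f_{k,i}=d\pi_{k,i}/d\pi$ and set $\pi_{\mathbf i}=(\prod_k f_{k,i_k})\,\pi$; this is nonnegative, sums correctly, and reproduces the objective, but it is supported only on $\bigcap_k \bar\cA_{k,i_k}$, which can strictly contain $\bar\cA_{\mathbf i}=\overline{\bigcap_k\cA_{k,i_k}}$. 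A one-dimensional example: with $\cA_{1,1}=[0,1)$, $\cA_{1,2}=[1,2]$, $\cA_{2,1}=[0,1]$, $\cA_{2,2}=(1,2]$ one has $\bar\cA_{(1,2)}=\overline{\cA_{1,1}\cap\cA_{2,2}}=\emptyset$ while $\bar\cA_{1,1}\cap\bar\cA_{2,2}=\{1\}$. So the $\pi_{\mathbf i}$ need not satisfy the support constraints of the refined problem \eqref{OT-problem-pwpoly}, and the appeal to Theorem \ref{th:equivalence-multimarg-pwpoly} does not go through as stated.

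The clean fix is precisely the observation you set up but did not complete. Since $c_k$ is l.s.c.\ on $\cX$ and coincides on $\cA_{k,i}$ with the polynomial $c_{k,i}$, for any $\x\in\bar\cA_{k,i}$ one can approach $\x$ by points $\y\in\cA_{k,i}$, along which $c_k(\y)=c_{k,i}(\y)\to c_{k,i}(\x)$ by continuity of the polynomial; lower semicontinuity then gives $c_k(\x)\le c_{k,i}(\x)$. Hence $c_{k,i}\ge c_k$ on $\bar\cA_{k,i}$, so for any feasible $(\pi,(\pi_{k,i}))$ of \eqref{eq:OT-sum-pwpoly},
\begin{equation}
\sum_{i=1}^{m_k}\int_\cX c_{k,i}\,d\pi_{k,i}\;\ge\;\sum_{i=1}^{m_k}\int_\cX c_k\,d\pi_{k,i}\;=\;\int_\cX c_k\,d\pi,
\end{equation}
and summing over $k$ gives the objective $\ge\int_\cX c\,d\pi\ge\rho$, which together with your first inequality and the weak-$*$ compactness you correctly noted yields $\tilde\rho=\rho$ and existence of minimizers. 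This is also the implicit step in the paper's own proof of Theorem \ref{th:equivalence-multimarg-pwpoly}, where the displayed equality $\tilde\cL(\pi_1,\hdots,\pi_m)=\cL(\pi)$ should really be the inequality $\tilde\cL(\pi_1,\hdots,\pi_m)\ge\cL(\pi)$ justified by exactly this l.s.c.\ argument. Your caveat about the ``infinitely many solutions'' claim is fair: non-uniqueness of the split $(y^*_{k,i})$ requires $\pi^*$ to charge some $\bar\cA_{k,i}\cap\bar\cA_{k,i'}$, and the theorem statement is loose on this point, but this is an issue with the statement rather than with your argument.
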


\section{Wasserstein distances and barycenters}\label{sec:wasserstein}

In this section, we consider the problems of computing distances and barycenters in Wasserstein spaces and show that they can be expressed as generalized moment problems. Throughout the section, 
$\cX$ denotes a compact basic semi-algebraic set in the normed vector space $(\bR^d,\Vert \cdot \Vert_p)$, with $p \in \bN^*$.

\subsection{Wasserstein distances}
The Wasserstein space $\cP_p(\cX)$ is defined as the set of probability measures $\mu\in \cP(\cX)$ with finite moments up to order $p$, namely
$$
\cP_p(\cX) \coloneqq \biggl\{ \mu \in \cP(\cX) \cond \int_\cX \Vert \x \Vert^p_p \,\rd\mu(\x) \;< +\infty \biggr\}.
$$
Let $\mu$ and $\nu$ be two probability measures in $\cP_p(\cX)$. For any $p\in \bN^*$, the $L^p$-Wasserstein distance $W_p(\mu,\nu)$ between $\mu$ and $\nu$ is defined by
\begin{equation}
\label{eq:Wp}
W_p^p(\mu,\nu) \coloneqq \mathop{\inf}_{\pi \in \Pi(\cX\times\cX;\mu,\nu)} \int_{\cX \times \cX} \Vert \x-\y\Vert^p_p \,\rd\pi(\x, \y). 
\end{equation}
The space $\cP_p(\cX)$ endowed with the distance $W_p$ is a metric space, usually called $L^p$-Wasserstein space (see \cite{Villani2003} for more details). The $W_p$ distance defined through problem \eqref{eq:Wp} is an optimal transport problem of the form \eqref{eq:MK} with $K=2$ marginals, $\cX_1 = \cX_2 = \cX$ and a continuous  cost function
$$
c(\x,\y)= || \x-\y||^p_p = \sum_{i=1}^d |x_i-y_i|^p
$$
We claim that for any $p\in \bN^*$, this problem can be seen as a generalized moment problem. We distinguish the cases where $p$ is even and odd.  

\paragraph{Case p even.}
When $p$ is an even number, the cost $c$ is a polynomial and we simply use the binomial theorem to derive that the loss function in \eqref{eq:Wp}
can be expressed as 
\begin{align*}
\cL^{W_p}(\pi) := \int_{\cX \times \cX} \Vert \x-\y\Vert^p_p \,\rd\pi(\x, \y)  
&= \sum_{i=1}^d \sum_{k=0}^p {p\choose k} \int_{\cX\times\cX} (-1)^k x_i^k y_i^{p-k} \rd\pi(\x,\y)
\end{align*}
or in terms of the moments $m(\pi)$ of $\pi$, 
\begin{align*}
\cL^{W_p}(\pi) = \sum_{i=1}^d \sum_{k=0}^p {p\choose k} (-1)^k m_{k\e_i, (p-k)\e_i}(\pi) 
& := \sum_{i=1}^d L_i^{W_p}(m(\pi))  := L^{W_p}(m(\pi)),
\end{align*}
where we recall that $\e_i$ is the $i$-th canonical vector in $\bN^d$.
The marginal constraints $\pi \in  \Pi(\cX\times \cX; \mu, \nu)$ of problem \eqref{eq:Wp} can also be expressed in terms of moments. 
We derived their general form in equation \eqref{eq:mom-marginals}. In the present context, they read
\begin{equation*}
m_{(\balpha, 0)}(\pi) = m_{\balpha}(\mu),\quad  
m_{(0, \bbeta)}(\pi) = m_{\bbeta}(\nu), \quad \forall \balpha,\bbeta \in \bN^{d}.
\end{equation*}
The problem \eqref{eq:Wp} can then be expressed as the generalized moment problem 
\begin{align}
\inf_{y\in \Pi_{mom}} L^{W_p}(y), \label{eq:Wp-moment}
\end{align}
where $\Pi_{mom} := \Pi_{mom}(\cX \times \cX;m(\mu), m(\nu))$ is the set of sequences $y \in \bR^{\bN^{2d}}$ that satisfy the moment sequence condition and 
the marginal constraints  
$$
y_{(\balpha, 0)} = m_{\balpha}(\mu)\quad \text{and} \quad y_{(0, \bbeta)} = m_{\bbeta}(\nu), \quad \forall \balpha,\bbeta \in \bN^{d}.
$$
Here, Theorem \ref{th:equivalence-multimarg-poly} applies and proves the equivalence between problems \eqref{eq:Wp-moment} and \eqref{eq:Wp}.

\paragraph{Case $p$ odd.}
 When $p$ is odd, the presence of the absolute value in the cost function $c$ prevents from having a polynomial expression. We can nevertheless derive a moment formulation by exploiting the fact that the cost is piecewise polynomial on $\cX \times \cX$.
We first  introduce for all $i\in\{1,\dots,d\}$ the subsets 
\begin{equation*}
\cA_i^+ \coloneqq \{ (\x,\y)\in \cX\times\cX \cond x_i-y_i \geq 0\}, \quad \cA_i^- \coloneqq \{ (\x,\y)\in \cX\times\cX \cond x_i-y_i < 0\}, 
\end{equation*}
that form a partition of  $\cX\times \cX$, i.e.
$$
  \cA_i^+ \cup \cA_i^-= \cX\times \cX, \quad \cA_i^+ \cap \cA_i^- = \emptyset.
$$
If $\cX$ is compact semi-algebraic, then $\cA_i^+$ and $   \overline{ \cA_i ^-}$ are also compact semi-algebraic. 
For any $\pi \in \cP(\cX\times\cX)$, we can define measures $\pi^+_i  , \pi^-_i$  by
\begin{align}\label{eq:def-pipm}
 \pi_i^+  &= \charFun_{\cA_i^+}    \pi, \quad 
 \pi_i^-  = \charFun_{\cA_i^-}  \pi, 
 \end{align}
which are such that
\begin{align}
\pi = \pi_i^+  + \pi_i^-,\quad \forall i \in \{1,\dots, d\}. \label{eq:constraint-Wp-sum-measures}
\end{align}
When $p$ is odd, since $\charFun_{\cA_i^- } + \charFun_{\cA_i^+}  = \charFun_\cX$, 
we can write the Wasserstein loss function as
\begin{align*}
\cL^{W_p}(\pi) 
&= \sum_{i=1}^d \int_{\cX\times\cX}  \vert x_i-y_i \vert^p (\rd\pi_i^+(\x,\y) + \rd\pi_i^-(\x,\y)) 
\\&= \sum_{i=1}^d \int_{\cX\times\cX}  (x_i-y_i)^p (\rd\pi_i^+(\x,\y)-\rd\pi_i^-(\x,\y)) 
\\&= \sum_{i=1}^d \sum_{k=0}^p {p\choose k} (-1)^k m_{k\e_i, (p-k)\e_i}(\pi_i^+ - \pi_i^-)
\\
&= \sum_{i=1}^d L^{W_p}_i(m(\pi_i^+) - m(\pi_i^-)).
\end{align*}

From Theorem \eqref{th:equivalence-multimarg-sum-pwpoly}, we know that 
problem \eqref{eq:Wp} is equivalent to the following problem with $2d+1$ measures,
\begin{align}\label{eq:Wp-odd}
W_p^p(\mu,\nu) =
\inf_{\substack{\pi \in \Pi(\mu,\nu), \\ \pi_i^+ \in \cM(\cA_i^+)  ,\pi_i^- \in \times \cM( \overline{\cA_i^-}) }} \sum_{i=1}^d  L_i^{W_p}(m(\pi^+_i) - m(\pi_i^-)) 
\end{align}
which can be equivalently reformulated as a generalized moment problem
\begin{align}
W_p^p(\mu,\nu) = \inf_{y , y_1^+ , \hdots, y_d^- } \sum_{i=1}^d L^{W_p}_i \left( y_i^+ -   y_i^- \right)
\end{align}
over a set of $2d+1$ sequences satisfying moment sequence conditions $y_ i^+ \in MS(\cA_i^+)$ and $y_ i^- \in MS(\overline{\cA_i^-})$,  $1\le i \le d$, and the constraints $y\in \Pi_{mom}(\cX\times \cX ; m(\mu),m(\nu))$ and 
$$
 y= y_i^+ +  y_i^-, \quad \forall i \in \{1,\hdots,d\}.
$$
Note that the variable $y$ can be eliminated. 

 \subsection{Wasserstein barycenters}\label{sec:wasserstein-barycenters}
 A notion that is widely used to approximate measures in the Wasserstein spaces is the one of barycenters. To define it, let $N\in \bN^*$ and let
$$
\Sigma_N \coloneqq \Big\{ \lambda \in \bR^N\cond \lambda_i\geq 0,\, \sum_{i=1}^N \lambda_i = 1 \Big\}
$$
be the simplex in $\bR^N$. We say that $\bary(\rY_N, \Lambda_N) \in \cP_p(\cX)$ is a barycenter associated to a given set $\rY_N = (\mu_i)_{1\leq i\leq N}$ of $N$ probability measures from $\cP_p(\cX)$ and to a given set of weights $\Lambda_N = (\lambda_i)_{1\leq i\leq n} \in \Sigma_N$, if and only if $\bary(\Lambda_N, \rY_N)$ is a solution to
\begin{equation}
\label{eq:barywass}
\inf_{\nu \in \cP_p(\cX)} \sum_{i=1}^N \lambda_i W_p^p(\nu,\mu_i).
\end{equation}
Existence and uniqueness of minimizers of \eqref{eq:barywass} has been studied in depth in \cite{AC2011} for the case $p=2$. It is shown, in particular, that if one of the $\mu_i$ has a density, the barycenter is unique. In the following we assume existence of minimizers.
Problem \eqref{eq:barywass} can be written as an optimization problem  
$$
\inf_{\nu , \pi_1,\hdots , \pi_N} \sum_{i=1}^N \lambda_i \cL^{W_p}(\pi_i)
$$
over measures $\nu \in \cP_p(\cX)$, and $\pi_i \in \cM(\cX \times \cX)_+$, $1\le i \le N,$ satisfying the constraints $\pi_i\in \Pi(\cX\times\cX; \nu,\mu_i)$.

When $p$ is even, this can be equivalently written as a generalized moment problem
$$
\inf_{y , y_1,\hdots , y_N} \sum_{i=1}^N \lambda_i L^{W_p}(y_i)
$$
over sequences that satisfy the constraints $y_i \in \Pi_{mom}(\cX \times \cX ; y , m(\mu_i))$, $1\le i \le N,$
Note that the unknown $y$ can be eliminated by imposing that all $y_i$ have the same left marginal sequence.

When $p$ is odd, the problem \eqref{eq:barywass} is equivalent to a generalized moment problem
$$
\inf_{y , (y_{i,j}^+ , y_{i,j}^-)}\sum_{i=1}^N \sum_{j=1}^d \lambda_i L^{W_p}_j(y_{i,j}^+ - y_{i,j}^-) 
$$
over sequences satisfying moment sequence conditions $y \in MS(\cX)$ and  $y_{i,j}^\pm \in MS(\cX\times \cX)$, $1\le i \le N, 1\le j \le d$, and the  additional constraints  
$y_{i,j}^+ + y_{i,j}^- = y_{i,k}^+ + y_{i,k}^-$ for all $i \in \{1,\hdots,N\}$ and 
$ 1\le j<k\le d$, and $y_{i,j}^+ + y_{i,j}^- \in \Pi_{mom}(\cX\times \cX ; y , m(\mu_i))$ for all $i \in \{1,\hdots,N\}$ and 
$ 1\le j\le d$.
Again, 
 the unknown $y$ could be eliminated by imposing that all $y_i$ have the same left marginal sequences.

\section{Gromov-Wasserstein discrepancies and barycenters}\label{sec:gromov}
For some applications such as shape matching or word embedding, an important limitation of classic Wasserstein metrics lies in the fact that it is not invariant to rotations and translations and more generally to isometries. It is also defined for measures defined on the same ambient space $\cX$. To overcome these limitations, several extensions have been proposed (see, e.g., \cite{AJJ2019}). We focus here on the so-called Gromov-Wasserstein discrepancies in Euclidian spaces, originally introduced in \cite{Memoli2011}, and which has recently attracted a lot of attention from practitioners.

\subsection{Gromov-Wasserstein discrepancies}
Given two compact semi-algebraic Borel sets $\cX \in \bR^{d_\cX}$  and $\cY \in \bR^{d_\cY}$, two probability measures $\mu\in \cP(\cX)$ and $\nu \in \cP(\cY)$, and  two cost functions 
$c_\cX : \cX \times \cX \to \bR$ and $c_\cY : \cY \times \cY \to \bR$, we define for $p\in \bN^*$ a Gromov-Wasserstein discrepancy $GW_{p}$ between 
measures $\mu$ and $\nu$ as 
\begin{align}
GW_p^p(c_\cX,c_{\cY} , \mu,\nu) = \inf_{\pi \in \Pi(\cX\times\cY; \mu,\nu)}  \cL^{GW_p}(\pi) \label{eq:GWp}
\end{align}
where the loss function  $\cL^{GW_p} : \cM(\cX\times \cY)_+ \to \bR$ is  such that 
$$
\cL^{GW_p}(\pi) = \int_{\cX\times \cY} \int_{\cX\times \cY} \vert c_\cX( \x , \x' )   -c_\cY( \y , \y' ) \vert^p d\pi(\x,\y) d\pi(\x',\y').
$$
Note that this problem is quadratic in $\pi$. It can alternatively be expressed as a linear problem with a rank-one tensor constraint in the augmented space
$$
\cZ := \cX \times \cY \times \cX \times \cY
$$
which can be identified with a basic semi-algebraic set of $\bR^{2n}$ with 
 $n= d_\cX + d_\cY$. Using the space $\cZ$, we can write
\begin{align}
GW_p^p (c_\cX,c_{\cY} , \mu,\nu) =
\inf_{\substack{\gamma= \pi \otimes \pi \in \cM_+(\cZ) \\ \pi \in \Pi(\cX\times \cY; \mu, \nu)}} 
\cL_{\text{aug}}^{GW_p}(\gamma)
\label{eq:GWp_aug}
\end{align}
with
$$
\cL_{\text{aug}}^{GW_p}(\gamma) \coloneqq \int_\cZ \vert c_\cX( \x , \x' )   -c_\cY( \y , \y' ) \vert^p d\gamma(\x, \y, \x',\y'),
$$
and we have
$$
\cL_{\text{aug}}^{GW_p}(\pi\otimes\pi) = \cL^{GW_p}(\pi), \quad \forall \pi \in \cM_+(\cX\times \cY).
$$

In the particular case where the cost functions $c_\cX = \Vert \cdot - \cdot \Vert_q^q$  and $c_\cY = \Vert \cdot - \cdot \Vert_q^q$ 
are related to $\ell^q$ norms, for some $q \in \bN ^*$, 
we denote $GW_{p,q}(\mu,\nu)$ the corresponding Gromov-Wasserstein discrepancy 
and $\cL^{GW_{p,q}}$ the corresponding loss. Note that case $ GW_{2,2}$ is of particular practical interest.  
We now distinguish different cases  depending on whether the costs $c_\cX$ and $c_{\cY}$ are polynomials or not.

\subsubsection{Polynomial costs $c_\cX$ and $c_\cY$} \label{sec:gromov-poly}
\label{sec:GW-poly}
Here we consider polynomial costs $c_\cX$ and $c_\cY$. Two cases will be again distinguished.  

\paragraph{Case $p$ even.}
When $p$ is even, then the cost 
 $ \vert c_\cX( \x , \x' )   -c_\cY( \y , \y' ) \vert^p $ 
 is a polynomial on $\cX$. Given polynomial expansions of $c_\cX$ and $c_\cY$, we can deduce a polynomial expansion of their difference
 $$
 g(\z)
 = c_\cX( \x , \x' )   -c_\cY( \y , \y' )
 = \sum_{i=1}^N c_i  \z^{\bgamma_i}, \quad \forall \z =(\x,\y,\x',\y') \in \cZ,
 $$
 with $\bgamma_i \in \bN^{2n}$ and $c_i\in \bR$. Using the multinomial theorem,
 $$
 |g(\z)|^p = ( c_\cX( \x , \x' )   -c_\cY( \y , \y' ) )^p 
 =  \sum_{\mathbf{k} = (k_1,\hdots,k_N) \in \bN^N , \vert \mathbf{k}\vert = p  } {p \choose {\mathbf{k}}} \prod_{i=1}^N c_i^{k_i} \z^{k_i\bgamma_i} := \sum_{\mathbf{k}  \in \bN^N , \vert \mathbf{k} \vert = p  } a_{\mathbf{k}} \z^{\bgamma_{\mathbf{k}}},
 $$ 
 with $\bgamma_{\mathbf{k}}= \sum_{i=1}^N k_i \bgamma_i \in \bN^{2n}$ and $a_{\mathbf{k}}= {p \choose {\mathbf{k}}} \prod_{i=1}^N c_i^{k_i} $. 
For $\bgamma \in \bR^{2n}$, we denote $\bgamma^L,\bgamma^R \in \bR^{n}$ such that $\bgamma = (\bgamma^L, \bgamma^R)$. This  yields the following expression of the Gromov-Wasserstein loss function in terms of moments
  \begin{align} 
  \cL^{GW_p}_{{aug}}(\pi \otimes \pi) &= %
 \sum_{\mathbf{k}  \in \bN^N , \vert \mathbf{k} \vert = p  } a_{\mathbf{k}}  m_{\bgamma_{\mathbf{k}}}(\pi\otimes \pi):=  L^{GW_p}_{aug}(m(\pi \otimes \pi)),\label{eq:loss_GWp_even_linear}
 \end{align}
with $L^{GW_p}_{aug}:\bR^{\bN^{2n}} \to \bR$ a linear functional, or
  \begin{align} 
  \cL^{GW_p}(\pi) &= %
  \sum_{\mathbf{k}  \in \bN^N , \vert \mathbf{k} \vert = p  } a_{\mathbf{k}}  m_{\bgamma^L_{\mathbf{k}}}(\pi)m_{\bgamma^R_{\mathbf{k}}}(\pi) 
  := L^{GW_p}(m(\pi)),\label{eq:loss_GWp_even_quadratic}
 \end{align} 
with $L^{GW_p}:\bR^{\bN^{n}} \to \bR$ a quadratic functional.
 

When $p$ is even and the costs are polynomials, the Gromov-Wasserstein problem \eqref{eq:GWp} can therefore be expressed  as a generalized moment problem with quadratic objective function
$$
GW_p^p(c_\cX,c_\cY ; \mu,\nu) = \inf_{y \in \Pi_{mom}} L^{GW_p}(y) 
$$
 with $\Pi_{mom}=\Pi_{mom}( \cX\times \cY; m(\mu),m(\nu))$ the set of sequences satisfying the moment sequence condition  {$y\in MS(\cX\times \cY)$} and marginal conditions, 
and we easily prove the equivalence between the two problems, following the proof of Theorem \ref{th:equivalence-multimarg-poly}.

\paragraph{Case p odd.}  When $p$ is odd, we can use a similar strategy as for Wasserstein distances. We introduce two subsets $\cA^+$ and $\cA^-$ of 
$\cZ$ defined by 
$$
\cA^+ = \{\z \in\cZ : g(\z) \ge 0 \}, \quad  \cA^- = \{\z   \in \cZ : g(\z) < 0 \},
$$
with $g(\z) = c_\cX( \x , \x' )   -c_\cY( \y , \y' )$ for $\z = (\x,\y,\x',\y').$ The sets are such that $(\cA^+ , \cA^-)$ form a partition of $\cZ$.  
If $\cX$ and $\cY$ are basic semi-algebraic sets, then the sets $\cA^+$, $\overline{\cA^-}$ are also basic semi-algebraic sets.  
For any $\pi \in \cP(\cX\times\cY)$, we define two measures $\gamma^+ = \charFun_{\cA^+}\pi \otimes \pi $ and $ \gamma^- = \charFun_{\cA^- }   \pi \otimes \pi,$
 which are such that 
\begin{align}
\pi \otimes \pi = \gamma^+ + \gamma^-. \label{eq:constraint-GWp-sum-measures}
\end{align}
Since $\charFun_{\cA^-} + \charFun_{\cA^+} = \charFun_\cZ$, 
we can write the Gromov-Wasserstein loss function as
\begin{align}
\cL^{GW_p}(\pi \otimes \pi) &= \cL^{GW_p}(\gamma^+ + \gamma^-) = \int_{\cX} g(\z)^p (d\gamma^+(\z) - d\gamma^-(\z)).
\end{align} 
Therefore, from \eqref{th:equivalence-multimarg-pwpoly}, we know that the problem \eqref{eq:GWp} is equivalent to the following problem 
\begin{align}\label{eq:GWp-odd}
\inf_{\pi \in \Pi(\mu,\nu), \gamma^+  \in \cM(\cA^+) ,  \gamma^-  \in \cM(\overline{\cA^-}) } \cL^{GW_p}_{aug}(\gamma^+ + \gamma^-)
\end{align}
over three measures satisfying the constraint \eqref{eq:constraint-GWp-sum-measures}, or equivalently 
\begin{align}\label{eq:GWp-odd-mom}
GW_p^p(c_\cX,c_\cY ; \mu,\nu) = \inf_{y \in \Pi_{mom}, y^+  \in MS(\cA^+ ),  y^-   \in MS(\overline{\cA^-} ) } L^{GW_p}_{aug}(y^+) - L^{GW_p}_{aug}(y^-),
\end{align}
with $L^{GW_p}_{aug}$  defined by \eqref{eq:loss_GWp_even_linear}, and where $y \in \Pi_{mom}(m(\mu),m(\nu))$ satisfies marginal conditions and the moment sequence condition on $\cX\times \cY$, the sequences $y^+ \in MS(\cA^+ )$ and $ y^- \in MS(\overline{\cA^-} ) $ satisfy the moment sequence condition on $\cA^+$ and $\overline{\cA^-}$ respectively, and the three sequences  satisfy the 
additional quadratic constraint 
$y^+ + y^- = y \otimes y
$, 
or equivalently 
$$
y^+_{\balpha,\bbeta} + y^-_{\balpha,\bbeta}  = y_{\balpha} y_{\bbeta}, \quad \forall \balpha,\bbeta \in \bN^{n}.
$$

\subsubsection{Piecewise polynomial costs $c_\cX$ and $c_\cY$}
\label{sec:gromov-pwpoly}
The case where $c_\cX$ and $c_\cY$ are piecewise polynomial functions can be treated by following the general strategy presented in Section \ref{sec:piecewise-polynomial-cost}. 
Let us briefly discuss the case of $GW_{p,q}$ with $q$ odd, where the cost is 
$$
\left\vert  \Vert \x - \x' \Vert_q^q -  \Vert \y - \y' \Vert_q^q \right \vert^p = \left \vert \sum_{i=1}^d \vert x_i - x_i' \vert^q -  \vert y_i - y_i' \vert^q \right\vert^p := \vert g(\z) \vert ^p.
$$
For $p$ even and $q$ odd, a first strategy is to introduce a partition $\{\cA_{\boldsymbol{\alpha} } : \boldsymbol{\alpha}  \in \{-1,1\}^{2d}\}$ with $2^{2d}$ elements, where 
\begin{align*}
\cA_{\boldsymbol{\alpha}} = \{ &\z = (\x,\y,\x',\y') \in \cZ :  \text{ for all $1\le i\le d$,} \\
  &x_i - x_i' \ge 0 \; \text{if} \; \alpha_i = 1 \; \text{or} \;  x_i - x_i' < 0 \; \text{if} \; \alpha_i = -1, \\
&y_i - y_i' \ge 0 \; \text{if} \; \alpha_{i+d} = 1 \; \text{or} \;   y_i - y_i' < 0 \; \text{if} \; \alpha_{i+d} = -1 \}.
\end{align*}
On each element $\cA_{\boldsymbol{\alpha}}$, the cost $  g(\z)  ^p$ is a polynomial. Therefore, the problem on a single measure $\pi$ can be reformulated as a problem on $4^d$ measures $\pi_{\boldsymbol{\alpha}} = \charFun_{\cA_{\boldsymbol{\alpha}}} \pi$. 
For $p$ odd and $q$ odd, we can introduce a partition $\{\cA_{\boldsymbol{\alpha}}^\pm : \boldsymbol{\alpha}  \in \{-1,1\}^{2d}\}$ with $2^{2d+1}$ elements, where 
$
\cA_{\boldsymbol{\alpha}}^+ =  \cA_{\boldsymbol{\alpha}} \cap \cB_{\boldsymbol{\alpha}}^{+}
$ and $
\cA_{\boldsymbol{\alpha}}^- =  \cA_{\boldsymbol{\alpha}} \cap \cB_{\boldsymbol{\alpha}}^{-}
$, with 
\begin{align*}
&\cB_{\boldsymbol{\alpha}}^{+} = 
 \{ \z = (\x,\y,\x',\y') \in \cZ :  
 \sum_{i=1}^d \alpha_i (x_i - x_i')^q  -  \alpha_{i+d} (y_i - y_i' )^q \ge 0  \},\\
&\cB_{\boldsymbol{\alpha}}^{-} = 
 \{ \z = (\x,\y,\x',\y') \in \cZ :  
 \sum_{i=1}^d \alpha_i (x_i - x_i')^q  -  \alpha_{i+d} (y_i - y_i' )^q < 0  \}.
\end{align*}
The initial problem on a measure $\pi$ is then reformulated as a problem on $2^{2d+1}$ measures $\pi_{\boldsymbol{\alpha}}^\pm$, $\boldsymbol{\alpha}  \in \{-1,1\}^{2d}$. 
\\
\par 
With the approach above, the number of measures is exponential in $d$. 
For $p$ even, in order to reduce the number of measures,
an alternative approach is to write the cost as 
$$
  g(\z)^p = \sum_{\mathbf{k}  \in \bN^{2d} , \vert \mathbf{k}\vert = p  } {p \choose {\mathbf{k}}} \prod_{i=1}^d \vert x_i - x_i' \vert^{qk_i} \prod_{i=1}^{d} (-1)^{k_{i+d}}\vert y_i - y_i' \vert^{qk_{i+d}},
$$
and for each $\mathbf{k}  \in \bN^{2d} $, with $\vert \mathbf{k}\vert = p$, introduce a partition adapted to the piecewise polynomial 
$p_{ \mathbf{k}}(\z) := \prod_{i=1}^d \vert x_i - x_i' \vert^{qk_i} \prod_{i=1}^{d} \vert y_i - y_i' \vert^{qk_{i+d}}$, and as many measures as the number of elements in the partition.  To a polynomial $p_{ \mathbf{k}}(\z)$ is associated a partition 
composed by at most $2^{m_{\mathbf{k}}}$ elements, with $m_{\mathbf{k}} \le p$ the number of odd entries in $\mathbf{k}$. This yields a reformulation with a number of measures bounded by $2^p {2d + p  \choose {2d}} = O(d^p)$.
As an example, for $p=2$, 
$$
g(\z)^2 =   \sum_{i=1}^d\sum_{j=1}^d  \vert x_i - x_i'\vert \vert x_j - x_j'\vert  - \sum_{i=1}^d\sum_{j=1}^d  \vert x_i - x_i'\vert \vert y_j - y_j'\vert + \sum_{i=1}^d\sum_{j=1}^d  \vert y_i - y_i'\vert \vert y_j - y_j'\vert,
$$
which can be reduced to  a sum of $2 d^2 + d$ piecewise polynomials, each of these piecewise polynomials being associated with a partition composed by $2$ or $4$ elements. This yields a reformulation in $O(d^2)$ measures.

%
%
%

\subsection{Gromov-Wasserstein barycenters}\label{sec:gromov-barycenters}

Using the same notations as in Section \ref{sec:wasserstein-barycenters}, we say that $\bary(\rY_N, \Lambda_N) \in \cP(\cX)$ is a Gromov-Wasserstein barycenter associated to a given set $\rY_N = (\mu_i)_{1\leq i\leq N}$ of $N$ probability measures in $\cP(\cY)$ and to a given set of weights $\Lambda_N = (\lambda_i)_{1\leq i\leq n} $ in the simplex  $\Sigma_N$, if and only if $\bary(\Lambda_N, \rY_N)$ is a solution to
\begin{equation}
\label{eq:barygromov}
\inf_{\nu \in \cP(\cX)} \sum_{i=1}^N \lambda_i GW_{p}^p(c_\cX , c_\cY ; \nu,\mu_i).
\end{equation}
Existence and uniqueness of minimizers of \eqref{eq:barygromov} has been studied in depth in \cite{AC2011} for $ GW_{2,2}$. It is shown, in particular, that if one of the $\mu_i$ has a density, the barycenter is unique. In the following we assume existence of minimizers.
Problem \eqref{eq:barygromov} can be written as a quadratic optimization problem  
$$
\inf_{\nu , \pi_1,\hdots , \pi_N} \sum_{i=1}^N \lambda_i \cL^{GW_p}(\pi_i)
$$
over measures $\nu \in \cP(\cX)$ and $\pi_i \in \cM(\cX \times \cY)_+$, $1\le i \le N,$ satisfying the constraints $\pi_i\in \Pi(\cX\times\cY; \nu,\mu_i)$.

When $p$ is even and the costs $c_\cX$ and $c_\cY$ are polynomials, this can be equivalently written as a generalized moment problem
\begin{align}
\inf_{y , y_1,\hdots , y_N} \sum_{i=1}^N \lambda_i L^{GW_p}(y_i) \label{gromov-barycenter-Ly}
\end{align}
over sequences that satisfy moment sequence conditions  $y\in MS(\cX)$ and $y_i \in MS(\cX\times \cY)$, $1\le i \le N,$  and the additional constraints $y_i \in \Pi_{mom}(y, m(\mu_i))$ for $1\le i \le N$.

When $p$ is odd and the costs $c_\cX$ and $c_\cY$ are polynomials, using notations of Section \ref{sec:gromov-poly}, we can introduce additional measures $\gamma_i^-$ and $\gamma_i^+$ supported on $\cA^+$ and $\cA^-$ respectively, and the problem is reformulated as 
$$
\inf_{y , y_1,\hdots , y_N , y_1^+, \hdots, y_N^-} \sum_{i=1}^N \lambda_i L^{GW_p}_{aug}(y_i^+ - y^- _i ),
$$
with the same constraints as before for $y, y_1,\hdots,y_N$ and the additional constraints $\gamma_i ^\pm \in MS(\cA^\pm)$  and $y_i^+ + y_i^- = y_i \otimes y_i$, $1\le i\le N$.

When the costs $c_\cX$ and $c_\cY$ are piecewise polynomials, e.g. for $GW_{p,q}$ with odd $q$, the problem can still be reformulated as a generalization moment problem up to the introduction of new measures, following Section \ref{sec:gromov-pwpoly}. The derivation is rather  technical but straightforward.

\section{SoS-moment hierarchy}
\label{sec:hierarchy}

All OT problems considered in this paper are  of the form
\begin{align}
\rho := \inf_{\pi_1 \in \cM(\cX_1)_+, \hdots, \pi_M \in \cM(\cX_M)_+}  \cG(\pi_1,\hdots,\pi_M) \label{gen-OT}
\end{align}
under additional constraints
$$
\cH_j(\pi_1,\hdots,\pi_M) = b_j, \quad j\in \Gamma,
$$
where $\cG$ and $\cH_j$, $j\in \Gamma,$ are linear or quadratic functions of a finite set of moments of the measures $\pi_1, \hdots, \pi_M$, and $\Gamma$ is a countable set.
The constraints include that
$\mathrm{mass}({\pi_i}) = m_{0}(\pi_i) \le 1.$ 
Problem \eqref{gen-OT} can be equivalently formulated as a generalized moment problem 
\begin{align}
\rho := \inf_{(y_1,\hdots,y_M) \in K}  G(y_1,\hdots,y_M) \label{gen-OT-mom}
\end{align}
where $K$ is the set of sequences $y_1 \in MS(\cX_1), \hdots, y_M \in MS(\cX_M)$ that satisfy 
the  constraints 
$$
H_j(y_1 , \hdots,y_M) = b_j, \quad j\in \Gamma,
$$
and where the functions $G : \bR^{\bN^{n_1}} \times \hdots \times \bR^{\bN^{n_M}} \to \bR$ and $H_j : \bR^{\bN^{n_1}} \times \hdots \times \bR^{\bN^{n_M}} \to \bR$ are linear or quadratic functions involving only finitely many entries of the input sequences $y_1, \hdots,y_M$. The constraints include the conditions 
$(y_i)_{0} \le 1$ for all $1\le i \le M$. 

The $\cX_i$ are assumed to be compact semi-algebraic sets defined by 
 $$
 \cX_i= \{\x_i \in \bN^{n_i} : g_{i,j}(\x_i) \ge 0 , \; 0 \le j \le J_i \},
 $$
 for some polynomials $g_{i,j} $ over $\bR^{n_i}$, where $g_{i,0}(\x_i) = 1$ and $g_{i,1}(\x_i) = R^2 - \Vert \x_i\Vert_2^2 $ for $\x_i\in \bR^{n_i}$, $1\le i\le M$, where $R>0.$ From Theorem \eqref{th:mom-seq-semidefinite}, the moment sequence condition $y_i \in MS(\cX_i)$ is equivalent to 
 the following set of positive semidefinite constraints
  \begin{align}
  \pbM_{r}(g_{i,j} y_i) \succcurlyeq 0 , \quad \forall j \in \{0,\hdots,J_i\}, \quad i \in \{1,\hdots,M\}, \quad r\in \bN. \label{eq:gen-OT-semdefpos}
  \end{align}
  The matrix $\pbM_{r}(g_{i,j} y_i) $ depends linearly on the entries $(y_i)_{\balpha}$ of order  $\vert \balpha \vert \le r_{i,j} + 2r$ with $r_{i,j} = \lceil deg(g_{i,j})/2 \rceil$.
We assume that $G$ only involves 
moments of order up to $r_G$, and the function 
$H_j$ only involves moments of order up to $r_{H_j}$.

 The Lasserre's (or SoS-moment) approach for solving \eqref{gen-OT} consists in considering a hierarchy of problems
 \begin{align}
\rho_r := \inf_{(y_1, \hdots,y_M) \in K_r}  G(y_1,\hdots,y_M) \label{eq:gen-OT-mom-r}
\end{align}
where $K_r$ is the set of sequences $y_1 \in MS_r(\cX_1), \hdots, y_M \in MS_r(\cX_M)$ that satisfy 
 the constraints 
\begin{align}
&H_j(y_1 , \hdots,y_M) = b_j, \quad j\in \Gamma_r, \label{eq:gen-OT-constraints-r}
\end{align}
with $\Gamma_r = \{j\in \Gamma : r_{H_j }\le 2 r\}$, and where $MS_r(\cX_i)$ is the set of sequences $y_i$ that satisfy
\begin{align}
  \pbM_{r-r_{i,j}}(g_{i,j} y_i) \succcurlyeq 0 , \quad \forall j \in \{0,\hdots,J_i\}. \label{eq:gen-OT-semdefpos-r}
  \end{align}
  Problem \eqref{eq:gen-OT-mom-r} is called a relaxation of order $r$ of problem \eqref{gen-OT-mom}. 
  These problems are considered  for $r \ge r^* :=  \max\{ \lceil r_G/2\rceil ,\max_{i,j} r_{i,j} \}$.  
   They only involve the  entries of $y_1, \hdots,y_M$ of order less than $2r$, and can be formulated over $M$ finite dimensional vectors $y_i^r$ in $\bR^{\bN^{n_i}_{2r}}$, $1\le i\le M$.   Then $y_i^r$ can be considered again as an infinite sequence in $\bN^{n_i}$ by completion with zeros.  
 
 \begin{theorem}\label{th:conv-hierarchy}
 Problem \eqref{eq:gen-OT-mom-r} admits a solution for all $r \ge r^*$. 
The sequence $(\rho_r)_{r\ge r^*}$ is increasing and  $\rho_r \to \rho$ as $r\to \infty.$ Moreover, from a sequence of solutions $(y_1^r,\hdots,y_M^r)$ of problems  \eqref{eq:gen-OT-mom-r}, we can extract a subsequence $(y_1^{r_k},\hdots,y_M^{r_k})$  such that for each $1\le i\le N$ and $\balpha \in \bN^{n_i}$, 
 $$
 (y_{i}^{r_k})_{\balpha} \to (y_i)_{\balpha} \quad \text{as $k\to \infty$},
$$ 
where the set of sequences $(y_1, \hdots,y_M)$ is a solution of problem \eqref{gen-OT-mom} and admits a representing measure 
$(\pi_1,\hdots,\pi_M)$ solution of  \eqref{gen-OT}. If 
  \eqref{gen-OT-mom} (or equivalently \eqref{gen-OT}) admits a unique solution, then we have the convergence of the whole sequence 
 $(y_{i}^{r})_{\balpha}$ to $(y_i)_{\balpha}$ as $r\to \infty$, for all $\balpha \in \bN^{n_i},$ where $(y_1,\hdots,y_M)$ is the solution of \eqref{gen-OT-mom}.   
  \end{theorem}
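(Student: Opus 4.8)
The plan is to follow the classical arguments of Lasserre's hierarchy for generalized moment problems, adapted to the present setting with several measures and quadratic constraints. First I would establish existence of a solution to each relaxation \eqref{eq:gen-OT-mom-r}. The key observation is that, because $g_{i,1}(\x_i) = R^2 - \Vert \x_i\Vert_2^2$ appears in the description of every $\cX_i$, the constraints $\pbM_{r-r_{i,j}}(g_{i,j} y_i) \succcurlyeq 0$ together with $(y_i)_0 \le 1$ force a uniform bound on all entries $(y_i)_\balpha$ with $\vert \balpha\vert \le 2r$ (this is the standard estimate: $\pbM_{r-1}((R^2-\Vert\x\Vert_2^2)y_i)\succcurlyeq 0$ and $\pbM_r(y_i)\succcurlyeq 0$ imply $\vert (y_i)_\balpha\vert \le \max\{1,R^{2r}\}$). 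Hence $K_r$, viewed as a subset of the finite-dimensional space $\prod_i \bR^{\bN^{n_i}_{2r}}$, is closed (the defining conditions are non-strict polynomial (in)equalities in the entries) and bounded, therefore compact; since $G$ is continuous (polynomial), a minimizer exists. Feasibility of $K_r$ for $r\ge r^*$ follows because the truncated moment sequences of any solution $(\pi_1,\ldots,\pi_M)$ of \eqref{gen-OT} lie in $K_r$.

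Monotonicity of $(\rho_r)$ is immediate: a feasible point for relaxation $r+1$ restricts to a feasible point for relaxation $r$ with the same objective value (the constraint index set $\Gamma_r$ grows with $r$ and the semidefinite constraints at level $r$ are implied by those at level $r+1$), so $\rho_r \le \rho_{r+1}$. Also $\rho_r \le \rho$ for all $r\ge r^*$, since $m(\pi_1^*),\ldots,m(\pi_M^*)$ is feasible for every relaxation and attains value $\rho$. It remains to show $\rho_r \to \rho$ and to extract a convergent subsequence of near-optimal truncated sequences whose limit solves \eqref{gen-OT-mom}. Take solutions $(y_1^r,\ldots,y_M^r)\in K_r$ of \eqref{eq:gen-OT-mom-r}. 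Using the uniform bounds above — which, crucially, are \emph{independent of $r$} for any fixed multi-index $\balpha$ — a diagonal extraction over $\bN^{n_1}\times\cdots\times\bN^{n_M}$ yields a subsequence $r_k$ and sequences $(y_1,\ldots,y_M)$ with $(y_i^{r_k})_\balpha \to (y_i)_\balpha$ entrywise. Passing to the limit in \eqref{eq:gen-OT-semdefpos-r}, each $\pbM_r(g_{i,j}y_i)\succcurlyeq 0$ for all $r$, so by Theorem \ref{th:mom-seq-semidefinite} (Putinar, Assumption \ref{ass-sos} being met via $g_{i,1}$) each $y_i$ has a representing measure $\pi_i\in\cM(\cX_i)_+$. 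Since $G$ and each $H_j$ depend on only finitely many entries, continuity gives $H_j(y_1,\ldots,y_M) = b_j$ for every $j\in\Gamma$ (each $j$ lies in $\Gamma_{r_k}$ for $k$ large), so $(\pi_1,\ldots,\pi_M)$ is feasible for \eqref{gen-OT}, whence $G(y_1,\ldots,y_M) = \cG(\pi_1,\ldots,\pi_M) \ge \rho$. On the other hand $G(y_1^{r_k},\ldots,y_M^{r_k}) = \rho_{r_k} \to G(y_1,\ldots,y_M)$ by continuity, and $\rho_{r_k}\le\rho$; combining, $G(y_1,\ldots,y_M) = \rho$, so the limit is optimal for both \eqref{gen-OT-mom} and \eqref{gen-OT}, and $\rho_r\to\rho$ by monotonicity. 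Finally, if the solution of \eqref{gen-OT-mom} is unique, a standard subsequence-of-subsequence argument upgrades subsequential convergence to convergence of the whole sequence: every subsequence has a further subsequence converging (entrywise) to the unique solution, hence the whole sequence converges.

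The main obstacle is the handling of the \emph{quadratic} objective/constraints (the Gromov–Wasserstein cases, where $G$ or some $H_j$ is quadratic, e.g. the rank-one-type constraint $y^+_{\balpha,\bbeta}+y^-_{\balpha,\bbeta} = y_\balpha y_\bbeta$). One must check that passing to the entrywise limit commutes with these quadratic expressions — which it does, since each quadratic form involves only finitely many entries and multiplication is jointly continuous on bounded sets — and that the resulting limit measures still satisfy $\gamma = \pi\otimes\pi$ in the sense needed to recover a genuine solution of the original (quadratic) OT problem; this is where one uses the equivalence theorems of Sections \ref{sec:OT}–\ref{sec:gromov} that already encoded these constraints faithfully at the level of sequences. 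A secondary point requiring care is verifying that the $r$-independent entrywise bound on $(y_i^r)_\balpha$ really does hold at the relaxed level (degree truncation $r - r_{i,j}$), which is exactly the content of the localizing-matrix estimate recalled above and is routine.
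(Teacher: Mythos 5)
Your proof is correct and follows the same overall structure as the paper's: establish the $r$-uniform entrywise bound $\vert (y_i^r)_\balpha\vert \le \max\{1,R^{2\lceil\vert\balpha\vert/2\rceil}\}$ from $\pbM_r(y_i)\succcurlyeq 0$ and $\pbM_{r-1}((R^2-\Vert\x\Vert_2^2)y_i)\succcurlyeq 0$ (the paper isolates this as Lemma~\ref{lem:bound-sequence}), deduce compactness of each $K_r$ for existence, use nesting $K_r\supset K_{r+1}\supset K$ for monotonicity, extract a subsequence converging entrywise, pass to the limit in the finitely-supported constraints $G,H_j$ and in the localizing matrices, invoke Putinar to produce representing measures, and finish with the subsequence-of-subsequences argument in the unique case. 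The one place where you diverge is the extraction step: you use a diagonal extraction over the countable index set $\bN^{n_1}\times\cdots\times\bN^{n_M}$, relying only on the per-entry bound being independent of $r$, whereas the paper rescales by $\tau_{\omega(\balpha)}$ so that $\hat y_i^r \in B_1(c_0)$ and then invokes Banach--Alaoglu in the weak-$*$ topology $\sigma(c_0,\ell^1)$. Your route is more elementary and is actually cleaner on a technical point: $c_0$ is not a dual space (its dual is $\ell^1$, but Banach--Alaoglu requires the space to \emph{be} a dual), so the paper's invocation should really be phrased in $\ell^\infty = (\ell^1)^*$ together with separability of $\ell^1$ for sequential compactness; your diagonal argument sidesteps this entirely. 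Otherwise the two proofs are interchangeable, and your remark about the quadratic objectives/constraints posing no issue (being continuous functions of finitely many entries, hence compatible with entrywise limits on bounded sets) is exactly the observation the paper implicitly relies on when passing $G$ and $H_j$ to the limit.
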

 \begin{lemma}\label{lem:bound-sequence}
  Let $r\in \bN$ and consider a sequence $y \in \bN^{n}$. If $\pbM_r(y) \succcurlyeq 0$ and $\pbM_{r-1}(g y) \succcurlyeq 0$ with $g(\x) = R^2 - \Vert \x\Vert_2^2$ for $r\in \bN$, then for all $0\le k \le r$,
 $$
 \vert y_\balpha \vert \le y_0 \max\{1 , R^{2k}\}, \quad \forall \balpha \in \bN^n_{2k}.
 $$
 \end{lemma}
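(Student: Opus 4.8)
The plan is to run the classical localizing-matrix moment bound in two stages: first bound the \emph{even} moments $y_{2\balpha}$ using the sign information carried by the two semidefinite constraints, then control an arbitrary moment $y_\balpha$ by a Cauchy--Schwarz estimate on the principal $2\times 2$ minors of $\pbM_r(y)$.

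First I would record three elementary consequences of positivity. From $\pbM_r(y)\succcurlyeq 0$: (a) diagonal entries are nonnegative, i.e. $y_{2\balpha}=\pbM_r(y)_{\balpha,\balpha}\ge 0$ whenever $|\balpha|\le r$ (in particular $y_0\ge 0$); and (b) nonnegativity of the $2\times 2$ principal minors gives $|y_{\balpha+\bbeta}|=|\pbM_r(y)_{\balpha,\bbeta}|\le\sqrt{y_{2\balpha}\,y_{2\bbeta}}$ for all $|\balpha|,|\bbeta|\le r$. From $\pbM_{r-1}(gy)\succcurlyeq 0$ with $g(\x)=R^2-\Vert\x\Vert_2^2$, reading off the diagonal entry $\pbM_{r-1}(gy)_{\balpha,\balpha}=R^2 y_{2\balpha}-\sum_{i=1}^n y_{2(\balpha+\e_i)}\ge 0$ yields (c) $\sum_{i=1}^n y_{2(\balpha+\e_i)}\le R^2 y_{2\balpha}$ for $|\balpha|\le r-1$.

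Next I would prove by induction on $|\balpha|$ that $y_{2\balpha}\le R^{2|\balpha|}\,y_0$ for every $|\balpha|\le r$; the base case is trivial. For the step with $0<|\balpha|\le r$, write $\balpha=\balpha'+\e_i$ for some $i$ with $\alpha_i\ge 1$, so $|\balpha'|=|\balpha|-1\le r-1$; since every $y_{2(\balpha'+\e_j)}$ is nonnegative by (a) (as $|\balpha'+\e_j|\le r$), the single term satisfies $y_{2\balpha}\le\sum_{j=1}^n y_{2(\balpha'+\e_j)}\le R^2 y_{2\balpha'}\le R^2\cdot R^{2(|\balpha|-1)}y_0$ by (c) and the induction hypothesis. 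Then, for a general $\balpha$ with $|\balpha|\le 2k$ and $k\le r$, I would split $\balpha=\bbeta+\bgamma$ with $\bbeta,\bgamma\in\bN^n$, $|\bbeta|=\lceil|\balpha|/2\rceil$, $|\bgamma|=\lfloor|\balpha|/2\rfloor$ (such a componentwise split always exists, by removing units from $\balpha$ one at a time); since $|\bbeta|,|\bgamma|\le k\le r$, estimate (b) together with the even-moment bound gives $|y_\balpha|\le\sqrt{y_{2\bbeta}\,y_{2\bgamma}}\le\sqrt{R^{2|\bbeta|}R^{2|\bgamma|}}\,y_0=R^{|\balpha|}y_0$. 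Finally $R^{|\balpha|}\le\max\{1,R^{2k}\}$ because $|\balpha|\le 2k$ (checking $R\ge 1$ and $0<R<1$ separately), which gives the claimed bound; the degenerate case $y_0=0$ forces $y\equiv 0$ and is trivial.

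I do not anticipate a real obstacle: the argument is purely linear-algebraic. The only points needing care are the degree bookkeeping---making sure that every moment-matrix entry invoked (the terms $y_{2(\balpha'+\e_j)}$ in the induction, and $y_{2\bbeta},y_{2\bgamma}$ in the last step) actually lies in the range controlled by the hypotheses $\pbM_r(y)\succcurlyeq 0$ and $\pbM_{r-1}(gy)\succcurlyeq 0$---and the mildly lossy final step that weakens the sharp bound $R^{|\balpha|}y_0$ to the stated $\max\{1,R^{2k}\}\,y_0$, a form that is convenient when the lemma is applied (together with $(y_i^r)_0=m_0(\pi_i)\le 1$) to obtain the uniform-in-$r$ bounds on the truncated sequences needed for the compactness argument in Theorem~\ref{th:conv-hierarchy}.
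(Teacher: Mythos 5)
Your proof is correct, and it follows a genuinely different route from the one in the paper, though both hinge on the same two ingredients: a recursion on diagonal moment entries extracted from the localizing matrix $\pbM_{r-1}(gy)\succcurlyeq 0$, and the moment-matrix positivity $\pbM_r(y)\succcurlyeq 0$ to control off-diagonal entries by diagonal ones. The paper invokes Proposition~3.6 of Lasserre's book as a black box --- that proposition directly gives $|y_\balpha|\le\max\{y_0,\max_i \ell_y(x_i^{2k})\}$ for $\balpha\in\bN^n_{2k}$ --- so it only needs to bound the ``corner'' diagonal entries $\ell_y(x_i^{2k})$ by the one-variable recursion $\ell_y(x_i^{2k})\le R^2\ell_y(x_i^{2k-2})$, obtained by testing the localizing constraint against $f=x_i^{k-1}$. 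You instead bootstrap a bound $y_{2\balpha}\le R^{2|\balpha|}y_0$ on \emph{all} even diagonal entries via a multivariate induction on $|\balpha|$, and then control the off-diagonal entries directly through $2\times 2$ principal minors (Cauchy--Schwarz), splitting a general $\balpha$ as $\bbeta+\bgamma$ with $|\bbeta|=\lceil|\balpha|/2\rceil$, $|\bgamma|=\lfloor|\balpha|/2\rfloor$. Your version is self-contained --- no external moment-matrix lemma is needed beyond nonnegativity of $2\times 2$ minors --- and actually produces the sharper intermediate bound $|y_\balpha|\le R^{|\balpha|}y_0$ before weakening it to the stated $\max\{1,R^{2k}\}y_0$; the paper's version is shorter on the page but relies on a stronger imported fact. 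Your degree bookkeeping checks out at every step (each quantity invoked lies within the range controlled by the two hypotheses), and the incidental remark about $y_0=0$ is harmless but superfluous, since the final inequality already degenerates gracefully there.
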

 \begin{proof}
 Since $\pbM_r(y) \succcurlyeq 0$ implies $\pbM_k(y) \succcurlyeq 0$ for all $0\le k \le r$, we deduce from  \cite[Prop. 3.6]{lasserre2009moments} that 
 \begin{align}
 \vert y_{\balpha} \vert \le \max\{y_{0} , \max_{1\le i \le n} \ell_y(x_i^{2k})\}, \quad \forall \balpha \in \bN^n_{2k}\label{eq:prop3.6}
\end{align}
for all $0 \le k\le r.$
 Moreover $\pbM_{r-1}( (R^2 - \Vert \x\Vert_2^2)y) \succcurlyeq 0$ is equivalent to $\ell_y(R^2 f^2) - \sum_{i=1}^n \ell_y(x_i^2 f^2) \ge 0$ for all $f \in \bR[\x]_{r-1}$. 
 Taking $f=1,$ we obtain $\ell_y(x_i^2) \le y_{0} R^2$. Then taking $f(\x) = x_i^{k-1}$ with $1\le k \le r$, we obtain $\ell_y(R^2 x_i^{2k-2}) - \sum_{j=1}^n \ell_y(x_j^2 x_i^{2k-2}) \ge 0$, which implies $\ell_y(x_i^{2k}) \le R^2 \ell_y(x_i^{2k-2}) \le R^{2 k} y_0$. Then $\max_{1\le i \le n} \ell_y(x_i^{2k}) \le y_0 R^{2k}$ and we conclude by using \eqref{eq:prop3.6}.
  \end{proof}

\begin{proof}[Proof of Theorem \ref{th:conv-hierarchy}]
The proof is adapted from the proof of  \cite[Theorem 4.3]{lasserre2009moments}. 
We detail it for the sake of completeness. Clearly, $K_r \supset K_{r+1} \supset \hdots \supset K$ for all $r\ge r^*$, so that $\rho_r$ is increasing with $r$ and $\rho_r \le \rho$. For all $1\le i\le M$, we have
  $(y_i^r)_0 \le 1$ and  $g_{i,0}=1$ and $g_{i,1}=R^2 - \Vert \cdot \Vert^2_2$. Then from the constraints  
\eqref{eq:gen-OT-semdefpos-r} and Lemma \ref{lem:bound-sequence}, we deduce  
$$
\vert (y^r_{i})_{\balpha} \vert \le    \tau_{\omega(\balpha)}, \quad \omega(\balpha) = \lceil \vert \balpha \vert/2 \rceil 
$$
with $\tau_k =  \max\{1,R^{2k}\}.$ 
We deduce that $K_r$ is a compact set of a finite dimensional space, and from the continuity of $G$ and $H_j$, $j\in \Gamma_r$, we deduce that 
\eqref{eq:gen-OT-mom-r} admits a solution $y^r = (y^r_1,\hdots,y^r_M)$. Now we identify each $y^r_i$ with a sequence in $\bN^{n_i}$ with components $(y^r_i)_{\balpha} = 0$ for $\vert \balpha \vert >2r.$ 
We introduce sequences $\hat y^r_i \in \bN^{n_i}$ defined by
$$
 (\hat y^r_i)_{\balpha} := (y^r_i)_{\balpha} / \tau_{\omega(\balpha)},
$$
which are such that $\Vert \hat y^r_i \Vert_{\ell^\infty} \le 1$ and $\hat y^r_i \in c_0 := \{ y \in \bN^{n_i}  : \lim_{\vert \balpha \vert \to 0} y_\balpha = 0\} \subset \ell^\infty$. Since $c_0$ is the topological dual of $\ell^1$, we have by the Banach-Alaoglu theorem that the unit ball $B_1(c_0)$ of $c_0$ is compact in the  weak-* topology $\sigma(c_0,\ell^1)$. Therefore, we can extract a subsequence $(\hat y_i^{r_k})_{k\ge 1}$ of $(\hat y_i^{r})_{r\ge r^*}$ which converges to some $\hat y_i \in B_1(c_0)$ in the weak-* topology. In particular, this implies that for all fixed $\balpha \in \bN^{n_i}$, $(\hat y^{r_k}_i)_{\balpha} \to (\hat y_i)_\balpha$ as $k \to \infty$ and therefore,  $(y^{r_k}_i)_{\balpha} \to (y_i)_{\balpha}$ as $k\to \infty$, where $y_i \in \bN^{n_i}$ is  defined by  $(y_i)_{\balpha} = (\hat y_i)_\balpha \tau_{\omega(\balpha)}$.
\\
Since the function $G(y_1,\hdots,y_M)$ only depends continuously on the finite set of variables $\{ (y_i)_{\balpha} : \vert \alpha \vert \le r_G, 1\le i \le N \}$, we deduce that 
$\rho_{r_k} = G(y^{r_k}_1,\hdots,y^{r_k}_M) \to G(y_1,\hdots,y_M) $ as $k\to \infty$. 
Also, for a fixed $j \in \Gamma$, since $H_j$ only depend continuously on the finite set of variables $\{ (y_i)_{\balpha} : \vert \balpha \vert \le r_{H_j,} 1\le i \le N \}$, we have that $H_j(y_1 , \hdots , y_M) = \lim_{k \to \infty} H_j(y_1^{r_k} , \hdots , y_M^{r_k}) = b_j. $ Also, for any $m \in \bN$, since $ \pbM_{m}(g_{i,j} y_i)$ only depends continuously in a finite set of variables $\{ (y_i)_{\balpha} : \vert \balpha \vert \le r_{i,j} + 2m, 1\le i \le N \}$, and from the closedness of the positive cone of symmetric positive semidefinite matrices, we deduce that $ \pbM_{m}(g_{i,j} y_i) = \lim_{k\to  \infty} \pbM_m(g_{i,j} y_i^{r_k}) \succcurlyeq 0$. Hence  
 $(y_1,\hdots,y_M) \in K$ and 
 $$
 \rho \le G(y_1,\hdots , y_M) = \lim_{k\to \infty}  \rho_{r_k} \le \rho,
 $$
 which proves that $(y_1,\hdots , y_M)$ is a solution of \eqref{eq:gen-OT-mom-r}. 
Since $\rho_r$ is increasing, this implies that the whole sequence $\rho_r $ converges to $\rho$ as $r\to \infty$. If the solution of \eqref{eq:gen-OT-mom-r} is unique, then from all subsequences of $((y_i^r)_{\balpha})_{r\ge r^*}$, we can extract a subsequence that converges to the same limit $((y_i)_{\balpha})_{r\ge r^*}$, which implies the convergence of the whole sequence.  
\end{proof}

\section{Post-processing}
\label{sec:postproc}
 
 Here we consider the post-processing of the solution of the SoS-moment approach. 
 From the solution of the problem \eqref{eq:gen-OT-mom-r}
of order $r$, we obtain an approximation $y^r$ of the moments $y = m(\mu)$ (up to order $2r$) of some probability measure of interest $\mu$ over a basic semi-algebraic set $\cX \subset \bR^n$, which is the target solution of the initial OT problem. 
We here assume that $\mu$ is the unique solution of the initial OT problem. By theorem \ref{th:conv-hierarchy}, we have that $y^r_\balpha$ converges to $m_\balpha(\mu)$ as $r\to \infty$, for each $\balpha \in \bN^n$.

\subsection{Approximation of linear quantities of interest}
From approximate moments, we directly obtain an estimation of the first statistics of $\mu$ and its marginals (mean, variance, covariance...) or more generally of any quantity of interest 
 $$I(g) = \int_\cX g(x) d\pi(\x) .$$
 For a polynomial   $g = \sum_{\vert \balpha \vert\le p}  c_\balpha  \x^\balpha \in \bR[\x]_p$, $I(g) =  \ell_{m(\pi)}(g)$ 
is estimated by $$I_r = \ell_{y^r}(g) =  \sum_{\vert \balpha \vert\le p}  c_\balpha  y^r_\balpha $$ and we have that $I_r \to I(g)$  as $r \to \infty$. For a function $g$ which is not a polynomial, the quantity $I$ can be approximated by $I_{r,p}  = \ell_{y^r}(g_{p})$ where $g_p =  \sum_{\vert \balpha \vert\le p}  c_\balpha  \x^\balpha \in \bR[\x]_p$ is a polynomial approximation of $g$, and 
$$
\vert I - I_{r,p} \vert = \vert \int_\cX (g - g_p) d\mu(\x) + \ell_{m(\mu) - y^r}(g_p) \vert \le  \Vert g - g_p \Vert_{L^\infty(\cX)} + \sum_{\vert \balpha \vert\le p} \vert c_\balpha  \vert \vert y^r_\balpha - m_\balpha(\mu) \vert,
$$
with $\Vert g - g_p \Vert_{L^\infty(\cX)}$ the error of approximation of $g$ by $g_p$.  
We have that  $I_{r,p}$ converges to $I $ as $r ,p \to\infty$.
Studying the rate of convergence of $I_{r,p}$ to $I$ requires some additional information on the convergence of $g_p$ and the convergence of the approximate moments.  

\subsection{Approximation of the support of $\mu$}
\label{sec:christoffel}
Here, we show how to estimate the support $S(\mu)$ of $\mu$ from an approximation of its moments, using the Christoffel function. 
Note that $S(\mu)$ is contained in the basic semi-algebraic set  $\cX$.
This methodology has been originally proposed in \cite{marx2021semi}. It is presented and analysed in \cite{pauwels2021data,vu2022rate} in a statistical setting.


For $r\in \bN$, we denote $\Pi_r^n = \bR[\x]_r$ the space of polynomials over $\bR^n$ with degree less than $r$. 
We let $\boldsymbol{\phi}_r(\x) = (\x^\balpha)_{\balpha \in \bN^n_r} \in \bR^{s(r)} $ be the vector of monomials of degree less than $r$, with $s(r) := {n + r \choose r} = \# \bN^n_r = \dim \Pi_r^n .$ 
For any $r \in \bN$, the moment matrix $\pbM_r(\mu) \in \bR^{s(r) \times s(r)}$ of $\mu$, with moments up to order $2r$, is given by
$$
\pbM_r(\mu) = \int_\cX \boldsymbol{\phi}_r(\x) \boldsymbol{\phi}_r(\x)^T d\mu(\x), 
$$
which is the Gram matrix in $L^2_\mu(\cX)$ of the canonical basis of $\Pi_r^n .$ For two polynomials $g(\x) =  \boldsymbol{\phi}_r(\x)^T \mathbf{a} $ and $h(\x) =  \boldsymbol{\phi}_r(\x)^T \mathbf{b} $ in $ \bR[\x]_r$ with coefficient $\mathbf{a}, \mathbf{b} \in \bR^{s(r)}$, $\mathbf{a}^T \pbM_r(\mu) \mathbf{b}  = \int_\cX h(\x) g(\x) d\mu(\x),$ that is the inner product in $L^2_\mu(\cX)$. In practice, an approximation of this moment matrix can be obtained from the solution $y^r$ of a relaxation of order $r$, or from a solution $y^{\tilde r}$ of higher order $\tilde r \ge r$ in order to get a better estimation. 

\paragraph{Non degenerate case:}
Let us first consider the case where $S(\mu)$ is not contained by a proper real algebraic subset of $\cX$. In other words, for any polynomial $p\in \bR[\x]$,  
$$
\int_\cX p(\x)^2 d\mu(\x) = 0 \quad \text{if and only if} \quad p=0.
$$ 
This is the case when $S(\mu)$ has nonzero Lebesgue measure. Hence, $\pbM_r(\mu)$ is invertible and 
the finite-dimensional space $\Pi^n_r$ of polynomials of degree less than $r$ is a reproducing kernel Hilbert space in $L^2_\mu$, whose kernel, called the Christoffel-Darboux kernel, is given for $\x,\y\in \bR^n$ by  (see \cite{dunkl2014orthogonal})
$$
\kappa_{\mu,r}(\x,\y) =  \sum_{i=1}^{s(r)} \varphi_i(\x) \varphi_i(\y)
$$
where $(\varphi_{1}, \hdots, \varphi_{s(r)})$ is some orthonormal basis of $\Pi^n_r$. It can be also written 
$$
\kappa_{\mu,r}(\x,\y) =  \boldsymbol{\phi}_r(\x)^T \pbM_r(\mu)^{-1} \boldsymbol{\phi}_r(\y).
$$ 
The Christoffel function $\Lambda_{\mu,r}$ is defined for $\y \in \bR^n$ by 
$$
\Lambda_{\mu,r}(\x)= \inf \{ \int_\cX p(\y)^2 d\mu(\y) : p \in \Pi_r^n  ,  \; p(\x)=1\}. 
$$
In the present regular case, we have for all $\x$, 
$$
\Lambda_{\mu,r} = \kappa_{\mu,r}(\x,\x)^{-1}.
$$
The support is then approximated by the set 
\begin{align}
S_r(\mu) = \{ \x\in \cX : \Lambda_{\mu,r}(\x) \ge \gamma_r\},\label{eq:Sr_chris}
\end{align}
for some suitably chosen $\gamma_r$. Since $\Lambda_{\mu,r}(\x)\le \gamma_r$ is equivalent to the polynomial inequality $\kappa_{\mu,r}(\x,\x)\le \gamma_r^{-1}$, $S_r$ is a polynomial sublevel set in $\cX$.  

From the Markov inequality, we have that 
$$ \mu(\cX \setminus S_r(\mu))  =  \mu(\{\x : \kappa_{\mu,r}(\x,\x) > \gamma_r^{-1}\}) \le \gamma_r \int_\cX \kappa_{\mu,r}(\x,\x) d\mu(\x) =  \gamma_r {s(r)}.$$
Therefore, by choosing $\gamma_r = \eta/s(r)$, we guarantee that $\mu(S_r(\mu)) \ge 1-\eta$,  that is $S_r(\mu)$ contains a fraction $1-\eta$ of the mass of $\mu.$

When the measure is absolutely continuous with respect to the Lebesgue measure $\lambda$, it is proven in \cite{lasserre2019empirical} that $S_r(\mu)$, with a suitable choice of the sequence $\gamma_r$, converges to $S(\mu)$ in the Haussdorff distance. Also, for a point $\x \notin S(\mu)$, $\Lambda_{\mu,r}(\x)^{-1}$ grows exponentially with $r$, while for a point $\x \in S(\mu)$, it  only grows polynomially. A heuristic approach then consists in estimating the rate of convergence from several values of $r$ in order to decide if $\x$ is in the support or not. 



\paragraph{Singular case:}
We now consider the case where the measure of $\mu$ is contained in an algebraic set, which results in a singular  
  moment matrix $\pbM_r(\mu)$, and we follow  \cite{pauwels2021data} for the definition of an approximate support. 

 We let $V$ be the Zariski closure of $S(\mu)$, which is the smallest  algebraic set containing $S(\mu).$
  We denote by $\cI_r$ the ideal of polynomials in $\Pi_r^n$ that vanish on $V$, which is the set of polynomials $p \in \Pi^n_r$ satisfying $\int p(\x)^2 d\mu = 0$. 
The quotient space $\Pi^n_r / \cI_r$ is a reproducing kernel Hilbert space in $L^2_\mu(\cX)$ with dimension $r'  = \mathrm{rank}(\pbM_r(\mu)) $, with kernel 
$$
\kappa_{\mu,r} (\x,\y)= \sum_{i=1}^{r'} \varphi_i(\x) \varphi_i(\y)
$$
where $\varphi_1,\hdots,\varphi_{r'}$ is an orthonormal basis of $\Pi^n_r / \cI_r$ in $L^2_\mu$. This kernel can be obtained by 
$$
\kappa_{\mu,r} (\x,\y) =  \boldsymbol{\phi}_r(\x)^T \pbM_r(\mu)^{\dagger} \boldsymbol{\phi}_r(\y),
$$
with $\pbM_r(\mu)^{\dagger} $ the Moore-Penrose pseudo-inverse of $\pbM_r(\mu)$ (of rank $r'$), which can be expressed $\pbM_r(\mu)^{\dagger} = \sum_{i=1}^{r'} \lambda_i^{-1} \mathbf{v}_i \mathbf{v}_i^T$ given a spectral decomposition $\pbM_r(\mu) = \sum_{i=1}^{r'} \lambda_i \mathbf{v}_i \mathbf{v}_i^T$ with orthonormal eigenvectors $\mathbf{v}_i$ and corresponding nonzero eigenvalues $\lambda_i$ of $\pbM_r(\mu)$.

A Christoffel function $\Lambda_{\mu,r}$ can still be defined through a variational formulation
$$
\Lambda_{\mu,r}(\x)= \inf \{ \int_\cX p(\y)^2 d\mu(\y) : p \in \Pi_r^n / \cI_r ,  \; p(\x)=1\}. 
$$
We still have $\Lambda_{\mu,r}(\x) = \kappa_{\mu,r}(\x,\x)^{-1}$ for all $\x\in V$, but for $\x\notin V$, the functions  $\Lambda_{\mu,r}(\x)$ and $\kappa_{\mu,r}^{-1}$ differ, which yields two possible definitions of an approximate support $S_r(\mu)$, using either $\Lambda_{\mu,r}(\x)$ or $\kappa_{\mu,r}(\x,\x)^{-1}$, that is either \eqref{eq:Sr_chris} or 
$$
S_r = \{\x\in \cX : \kappa_{\mu,r}(\x,\x)^{-1} \ge \gamma_r\}.
$$ 

\paragraph{Practical aspects:}
The functions $\kappa_{\mu,r}$ and $\Lambda_{\mu,r}$ are functions of the moment matrix $\pbM_r(\mu)$ of the true measure $\mu$. In practice, the measure $\mu$ is replaced by the approximation $\mu_{ r}$ of a relaxation of order $r $, that yields approximate functions $\kappa_{\mu_r,r}$ and $\Lambda_{\mu_r,r}$ and 
 corresponding approximate supports $S_r := S_r(\mu_r).$ Note that for fixed $r$, $\mu$ could be replaced by the solution $\mu_{\tilde r}$ of a relaxation of higher order $\tilde r$. A quantitative approach is still missing. 
   
  \subsection{Approximation of the density}
If the measure $\mu$ admits a density $f$ with respect to a known measure $\nu$ on $\cX$, i.e $d\mu(\x) = f(\x) d\nu(\x)$, then the Christoffel function could also be used to estimate the density on the support $S(\mu)$ (or its estimation), as suggested in \cite{lasserre2019empirical}.

Also, the values $y^r_\balpha$ provide approximations of the moments
$$
m_\balpha(\mu) = \int_\cX \x^\balpha d\mu(\x) =  \int_{\cX} \x^\balpha f(\x) d\nu(\x),
$$
that is the inner product of $f(\x)$ and $\x^\balpha$  in $L^2_\nu(\cX)$. Different types of approximations of $f$ can be obtained from  this information. 
In particular, a polynomial approximation $f_{r,p} = \sum_{\vert \bbeta \vert \le p} a_\bbeta \x^\bbeta$ of $f$, $p\le 2r$, can then be obtained by solving a weighted least-squares problem
$$
\min_{(a_\bbeta)_{\vert \bbeta \vert\le p}} \sum_{\vert \balpha \vert \le 2r} w_\balpha \Big\vert y^r_{\balpha} - \sum_{\vert \bbeta \vert \le p} (G_\nu)_{\balpha,\bbeta} a_\bbeta \Big\vert^2,
$$ 
where $G_\nu$ is a Gram matrix in $L^2_\mu$ with entries   $(G_\nu)_{\balpha,\bbeta} = \ell_m(\nu)(\x^\balpha \x^\bbeta)  = \int_\cX \x^\balpha \x^\bbeta d\nu(\x) $. 
From a computational point of view, the use of canonical polynomial basis may yield to numerical instabilities and high round-off errors. Therefore, the use of other polynomial basis should be preferred. 

Some reformulations of the initial OT problem yield approximations of the moments of the measures $1_{A_k} \mu$ where the $A_k$ form  a partition of $\cX$. 
In this case, a local polynomial approximation of the density on $A_k$ can be computed,  which results in a global piecewise polynomial approximation of $f$ over $\cX$.

 \section{Numerical illustrations}
\label{sec:numerics}
 The aim of this section is to illustrate how the method behaves for the computation of Wasserstein distances, barycenters, and Gromov-Wasserstein discrepancies. We also discuss some choices we make in our implementation. 
 
 \subsection{Wasserstein distances and barycenters}
 
 The code used to generate the examples shown here  is available at
\begin{center}
\href{https://gitlab.tue.nl/data-driven/sos-ot}{https://gitlab.tue.nl/data-driven/sos-ot}
\end{center}
For our numerical tests, we consider cartoon images as displayed in Figure \eqref{img:cartoon-imgages}. The images are $400\times 400$ pixels but we will see each of them as a uniform measure on a subset $S\subset [0, 1]^2$ which is defined as
$$
d\mu(x) = \frac{1}{|S|}  \charFun_{S}(x) \dx
$$
The shape and location of the support $S$ varies for each image. We consider three different types of shapes: smileys, stars, and pacmen.

\begin{figure}[h]
  \centering
  \includegraphics[scale=0.07]{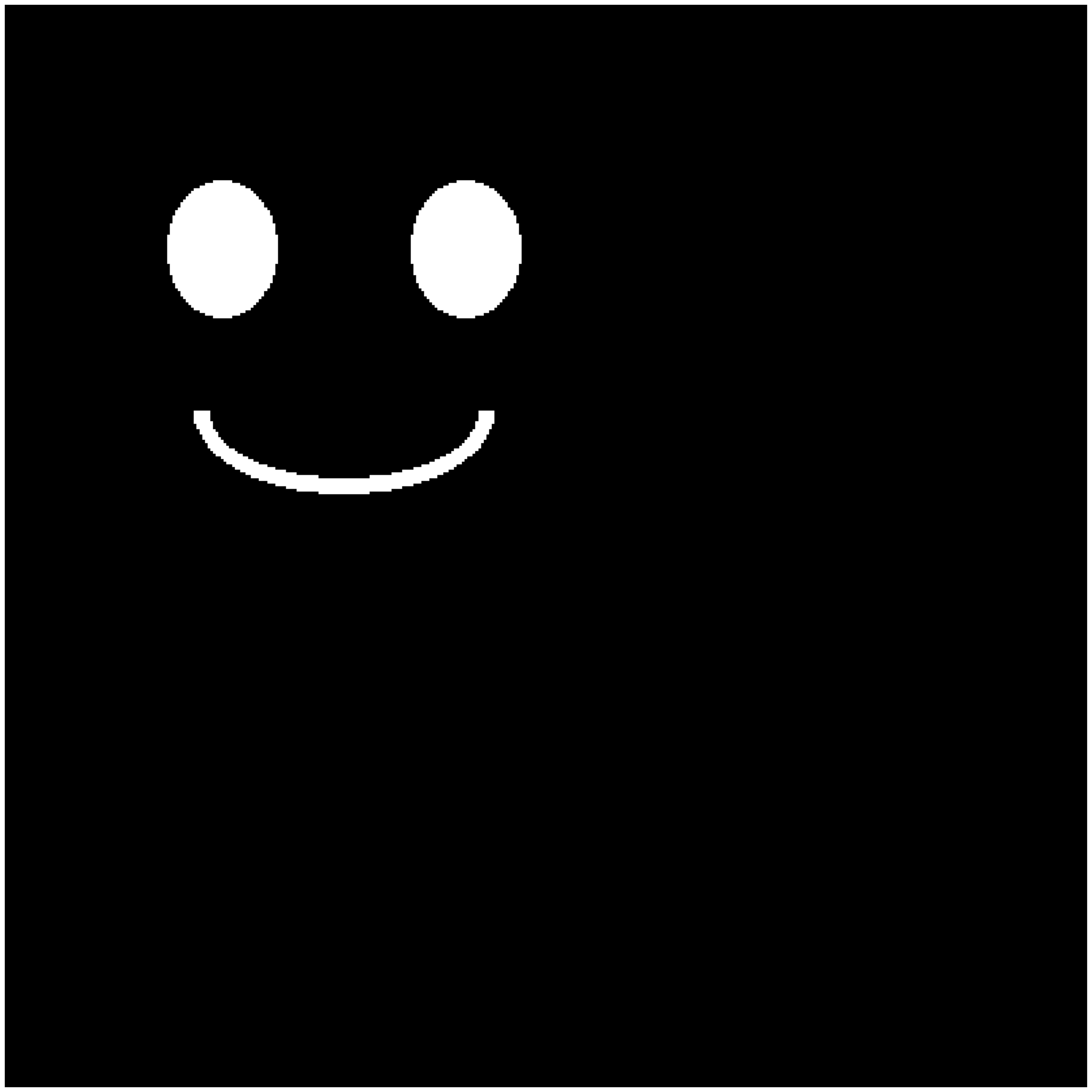}
  \quad
  \includegraphics[scale=0.07]{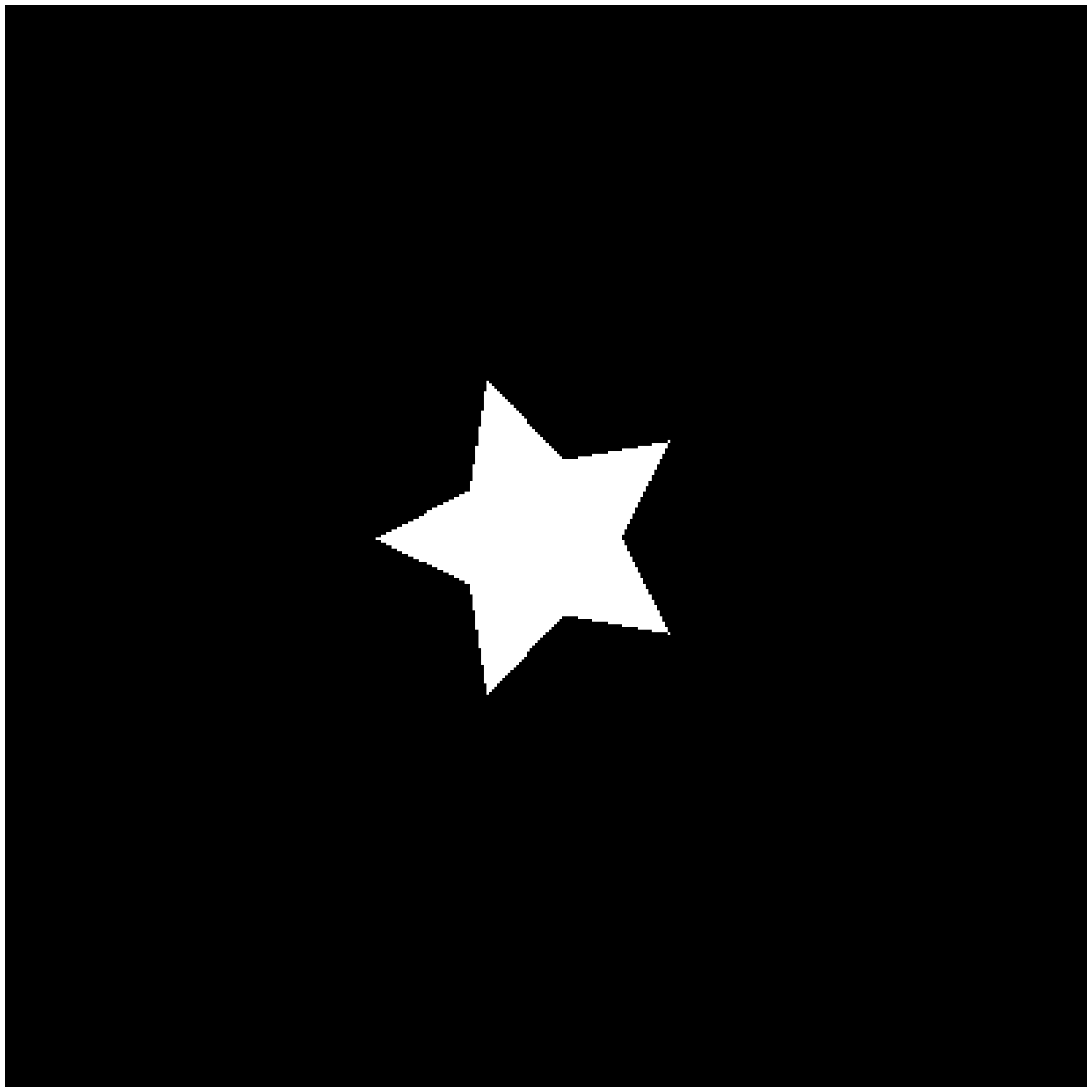}
  \quad
  \includegraphics[scale=0.07]{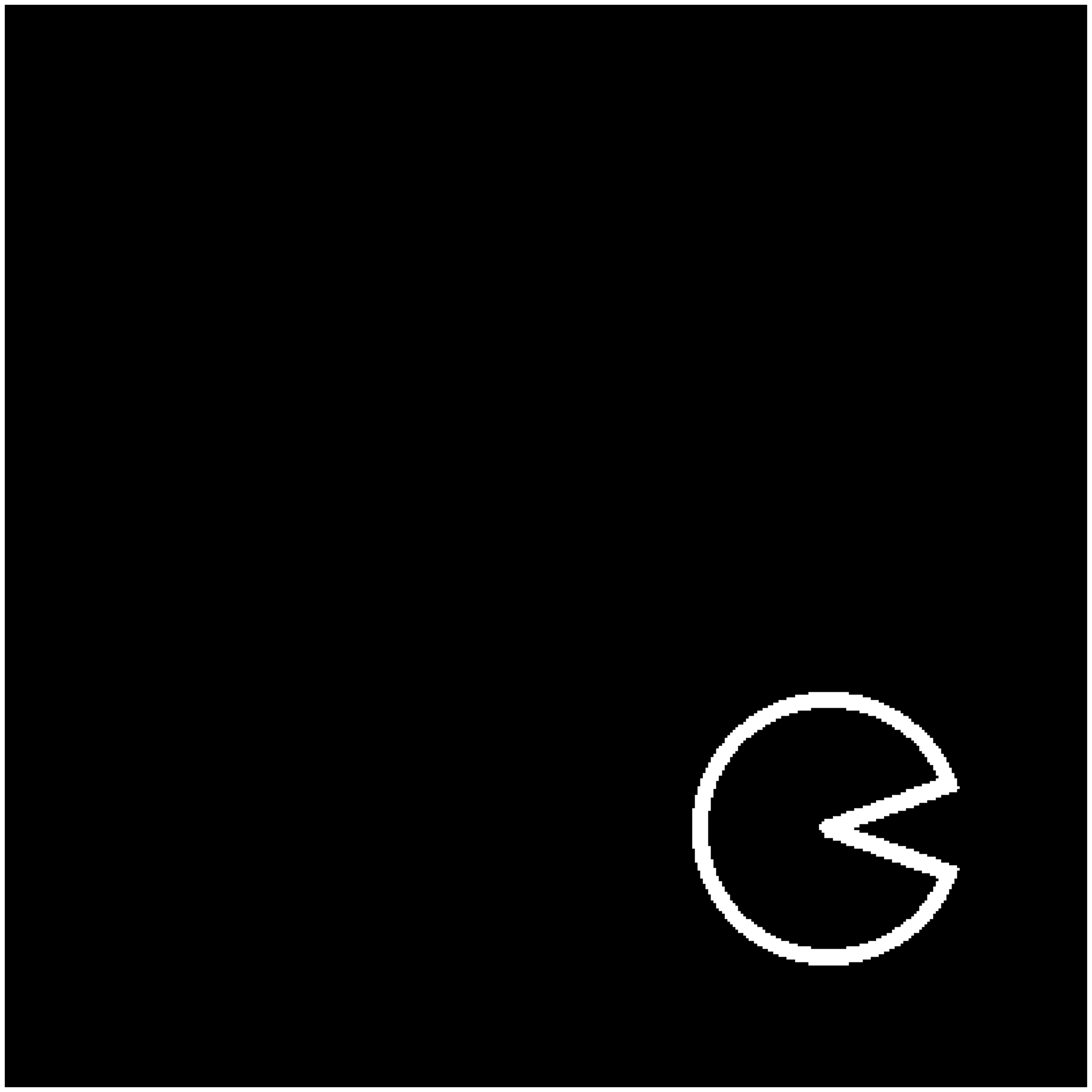}
  \caption{Sample images considered for the tests.}
  \label{img:cartoon-imgages}
\end{figure}

 \subsubsection{$W_2$, $W_1$ distance}
 We start by estimating the $W_1$ and $W_2$ distances of two translated smileys $\mu_1,\,\mu_2$ for which we know the exact translation vector $T=(t_1, t_2)\in \bR^2$ (see Figure \eqref{img:smileys-distances-img}). In this simple case, we know that the exact distance is given by
 $$
 W_p(\mu_1, \mu_2) = || T ||_{\ell_p(\bR^2)}, \quad p \in \{1, 2\},
 $$
 so we can validate the accuracy of our moment approach. Figure \eqref{img:smileys-distances-rel-err} shows the relative errors in the estimation as a function of the relaxation order $r$. We see that we obtain an extremely high accuracy for all relaxation orders. The high accuracy obtained with only the first order is particularly remarkable. Figure \eqref{img:smileys-distances-runtime} shows an exponential increase in the runtime as a function of $r$, which is to be expected given that the number of unknown moments to estimate grows exponentially with $r$. Repeating the same experiment with translated stars and translated pacmen yields similar results, with very high accuracy since the first relaxation order $r=1$.

\begin{figure}
  \centering
  \begin{subfigure}{\textwidth}
    \centering
    \includegraphics[scale=0.07]{{figures/data/smiley-happy00.pdf}}
    \includegraphics[scale=0.07]{{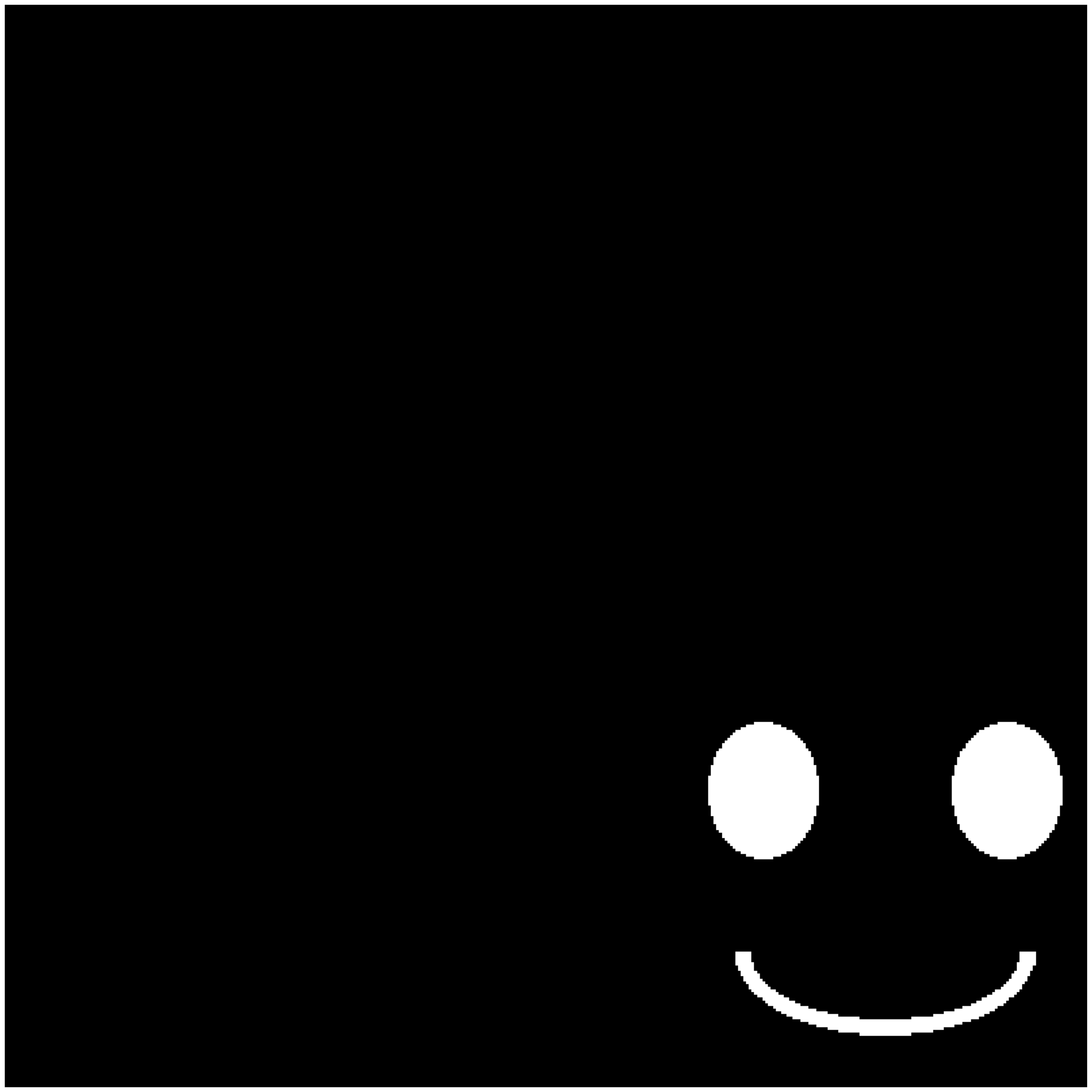}}
    \caption{Translated smileys.}
    \label{img:smileys-distances-img}
\end{subfigure}%
\\
  \begin{subfigure}{.47\textwidth}
      \centering
      \includegraphics[width=\textwidth]{{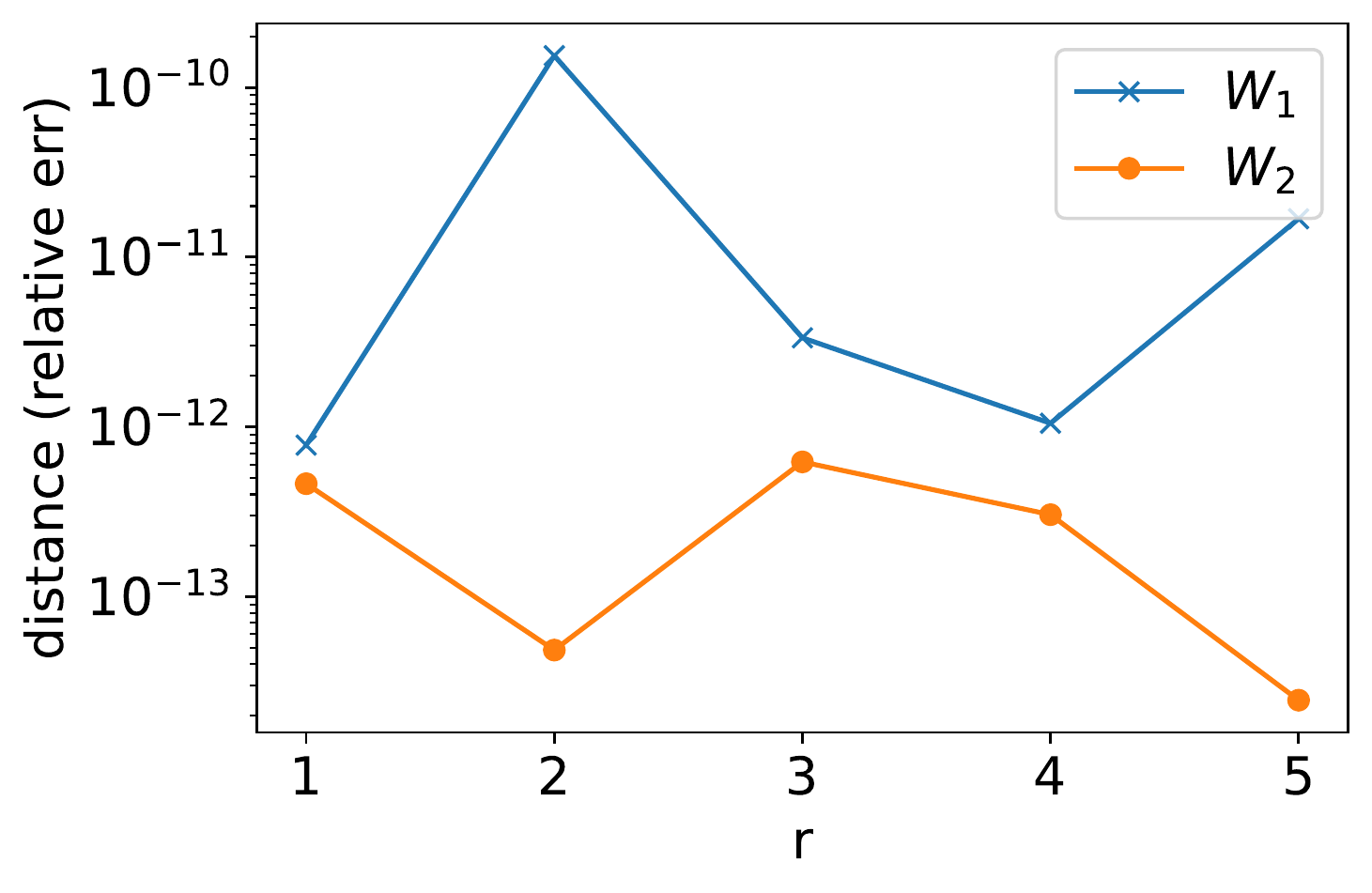}}
      \caption{Relative error for the distance.}
      \label{img:smileys-distances-rel-err}
  \end{subfigure}%
  \hspace{0.04\textwidth}
  \begin{subfigure}{.47\textwidth}
  \centering
  \includegraphics[width=\textwidth]{{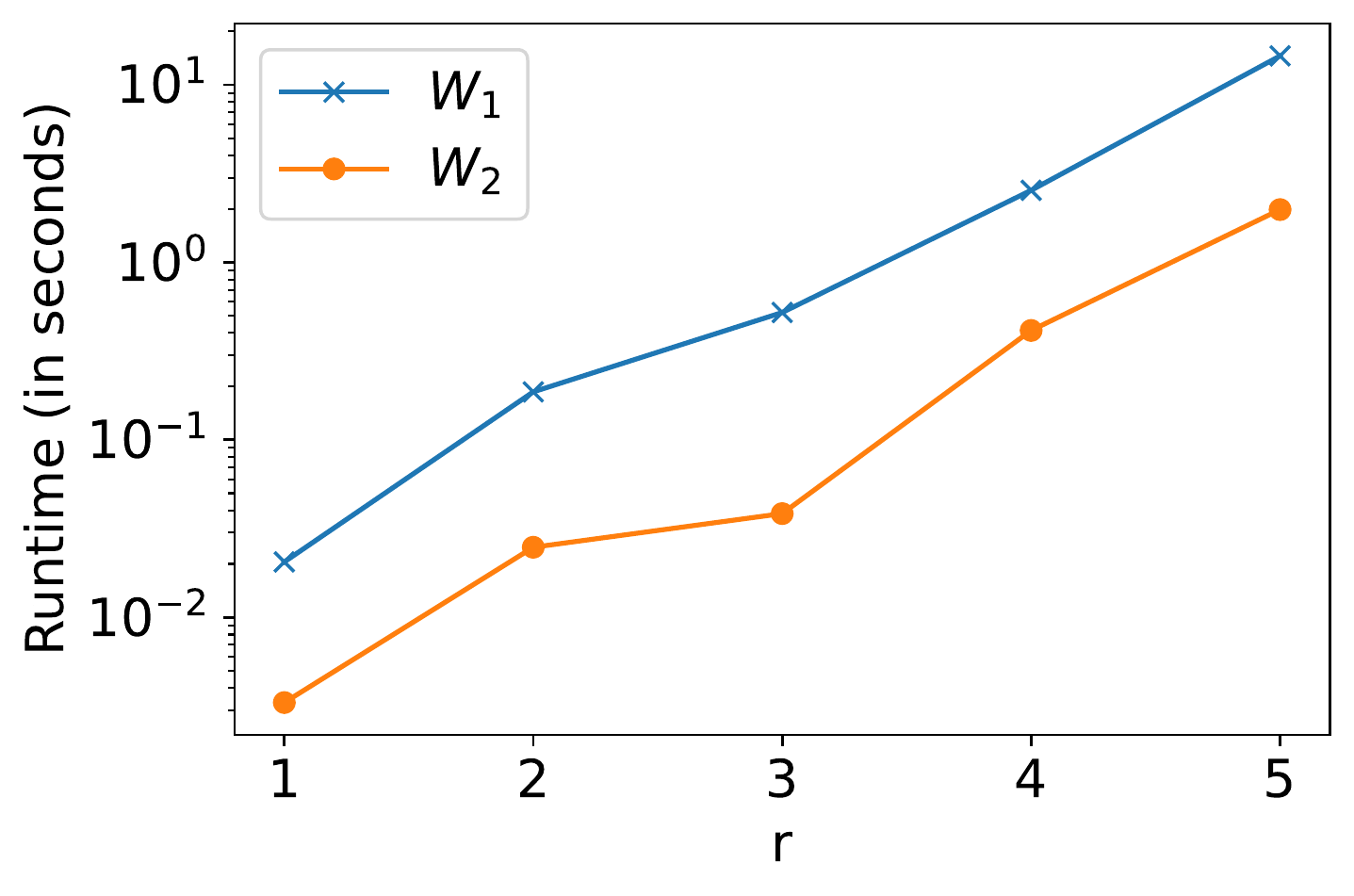}}
  \caption{Runtime.}
  \label{img:smileys-distances-runtime}
  \end{subfigure}
  \caption{$W_1$ and $W_2$ distance between translated smileys.}
  \label{img:smileys-distances}
  \end{figure}
 
 \subsubsection{$W_2$-Wasserstein barycenters}
 We now turn to the computation of barycenters. We present the following test for validation purposes: consider the four translated smileys of Figure \eqref{img:barycenter-measures}. We know that the $W_2$ barycenter of these measures with uniform weights $(0.25, 0.25, 0.25, 0.25)$ is equal to the smiley which is located at the center of the other images. Our goal is to study how accurately we can recover that target barycenter with our moment approach.
 
 Since we know the exact barycenter, we also know the exact moments so we start by examining how accurately they are estimated. Figure \eqref{img:barycenter-rel-err-moms} shows the relative error in the computation of the first moments as a function of $r$. Figure \eqref{img:barycenter-max-rel-err-moms} reports the maximum absolute error in the moment estimation for each order $r$. We observe that the absolute errors decays relatively quickly (we gain about a half order of magnitude per relaxation order). Similar observations hold for relative errors. It would be interesting to examine the trend for larger orders but this has not been possible with the current implementation due to conditionning issues, and also due to the use of Mosek as a black-box optimization solver (which prevented us from sparsifying certain variables and operations, which are critical to prevent memory overflows when the complexity grows). We  leave this implementation point for a future contribution  in which we will also explore strategies to solve optimal transport problems in high dimension.
 
 \begin{figure}
  \centering
  \begin{subfigure}{0.47\textwidth}
    \centering
    \includegraphics[scale=0.03]{{figures/data/smiley-happy00.pdf}}
    \qquad
    \includegraphics[scale=0.03]{{figures/data/smiley-happy22.pdf}}\\
    \includegraphics[scale=0.03]{{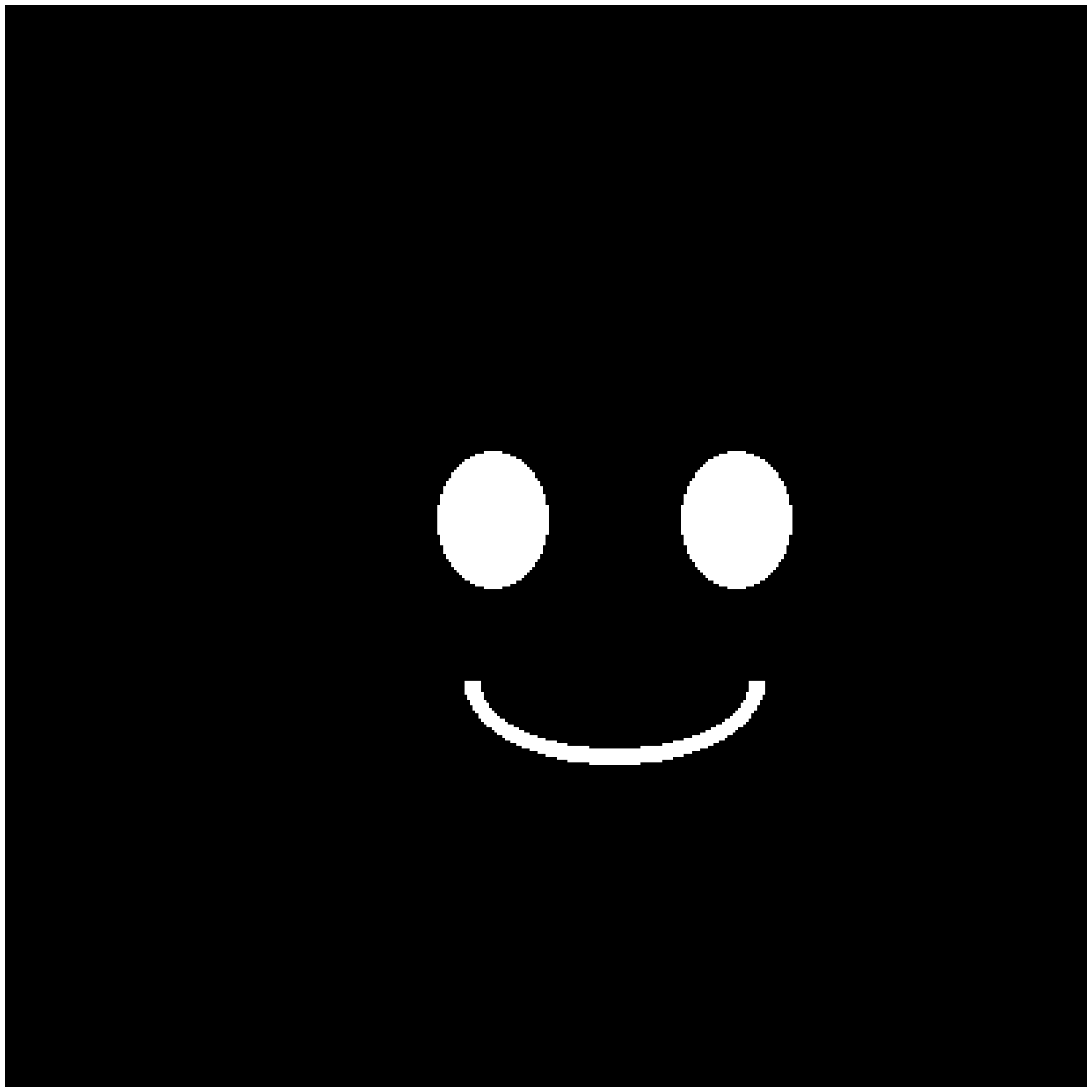}}\\
    \includegraphics[scale=0.03]{{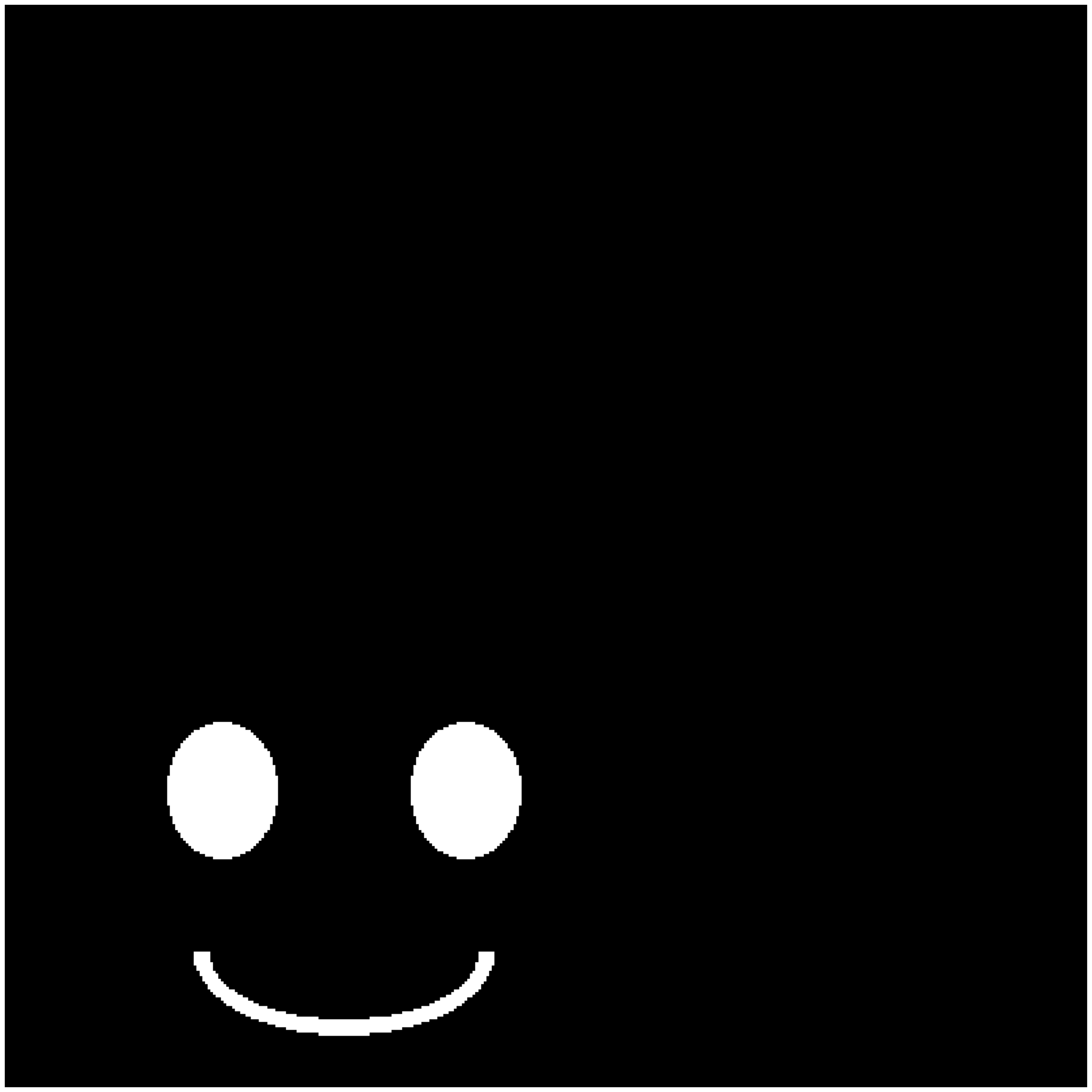}}
    \qquad
    \includegraphics[scale=0.03]{{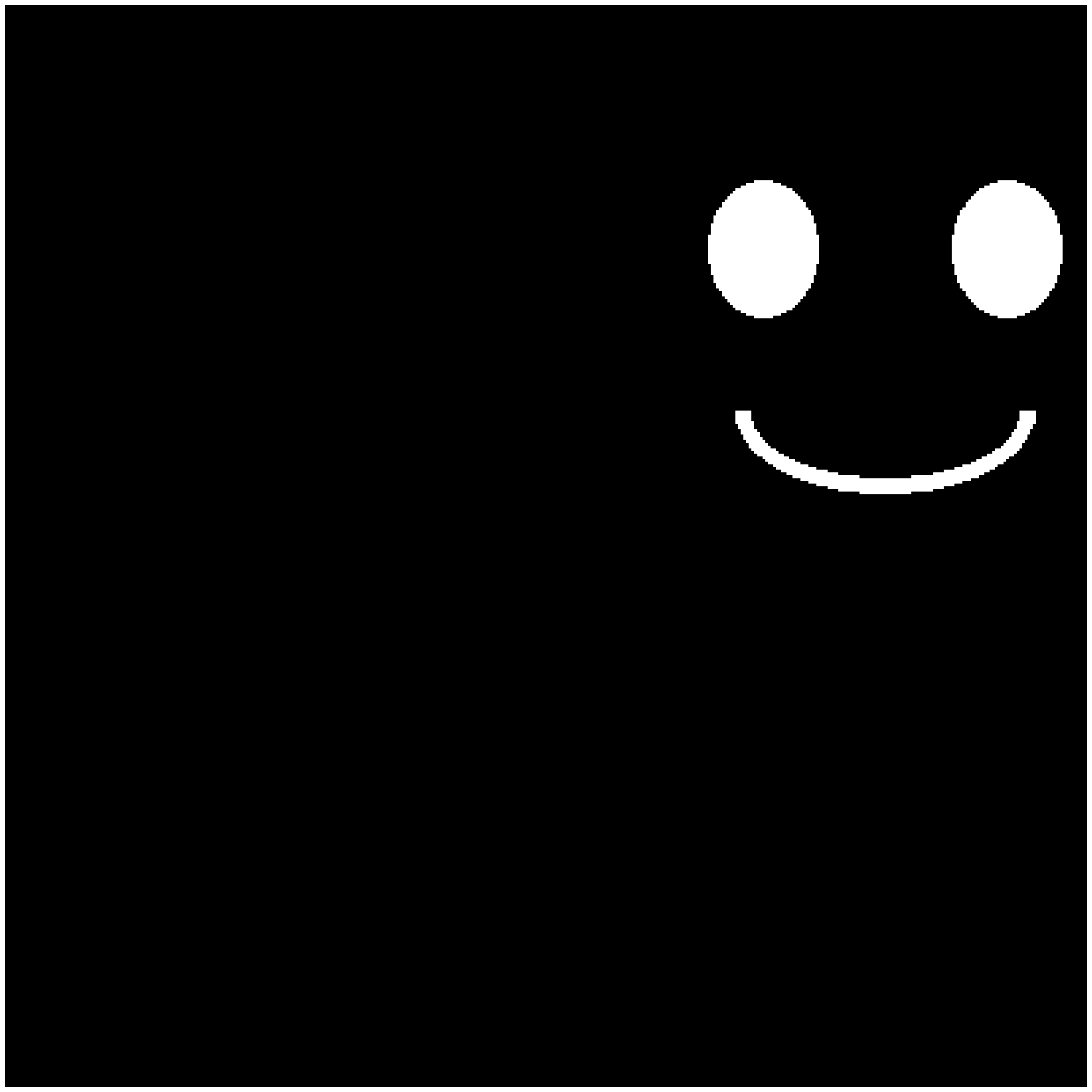}}
    \caption{The barycenter of the four smileys in the corners with uniform weights is equal to the smiley in the center.}
    \label{img:barycenter-measures}
\end{subfigure}%
~
\begin{subfigure}{0.47\textwidth}
  \centering
  \includegraphics[width=\textwidth]{{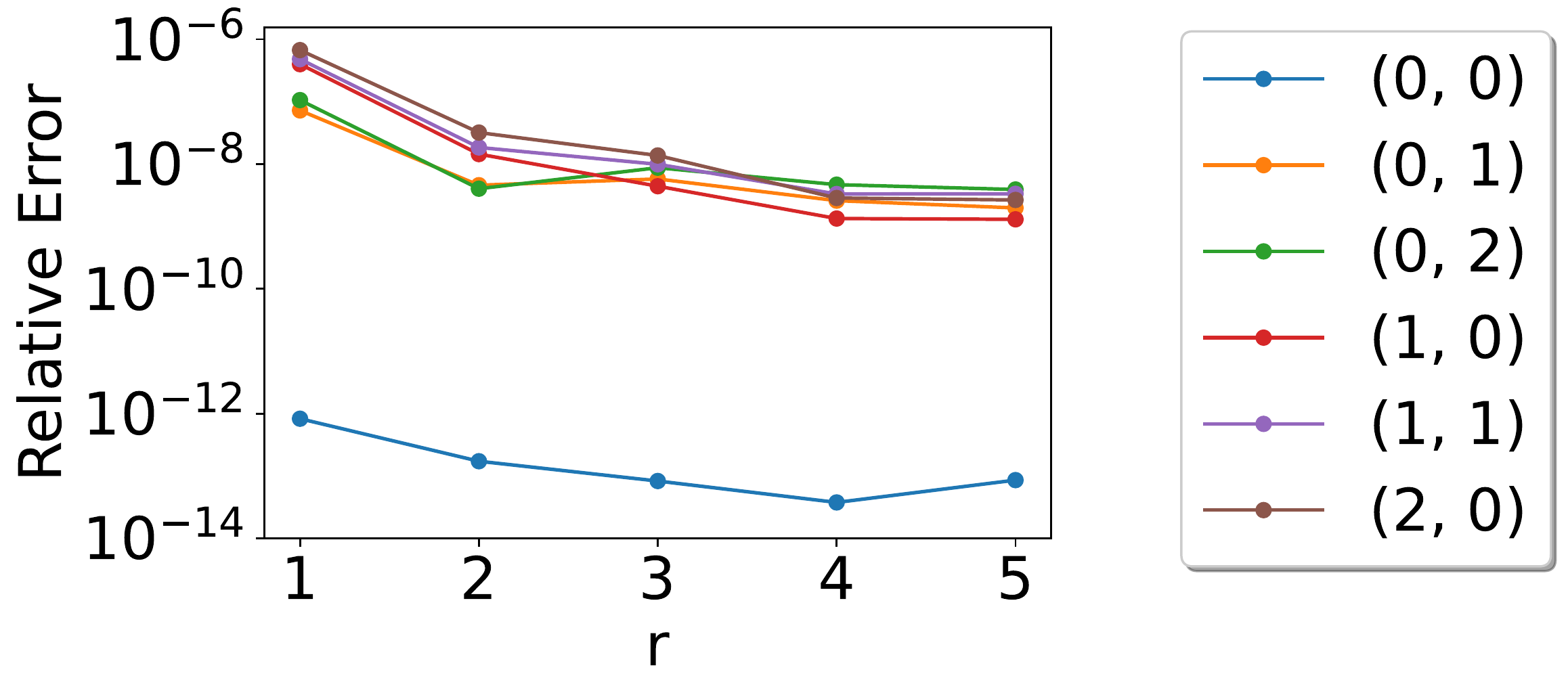}}
  \caption{Relative error in the estimation of the first moments. Absolute errors give a similar plot.}
  \label{img:barycenter-rel-err-moms}
\end{subfigure}%
\\
\begin{subfigure}{0.47\textwidth}
  \centering
  \includegraphics[width=0.9\textwidth]{{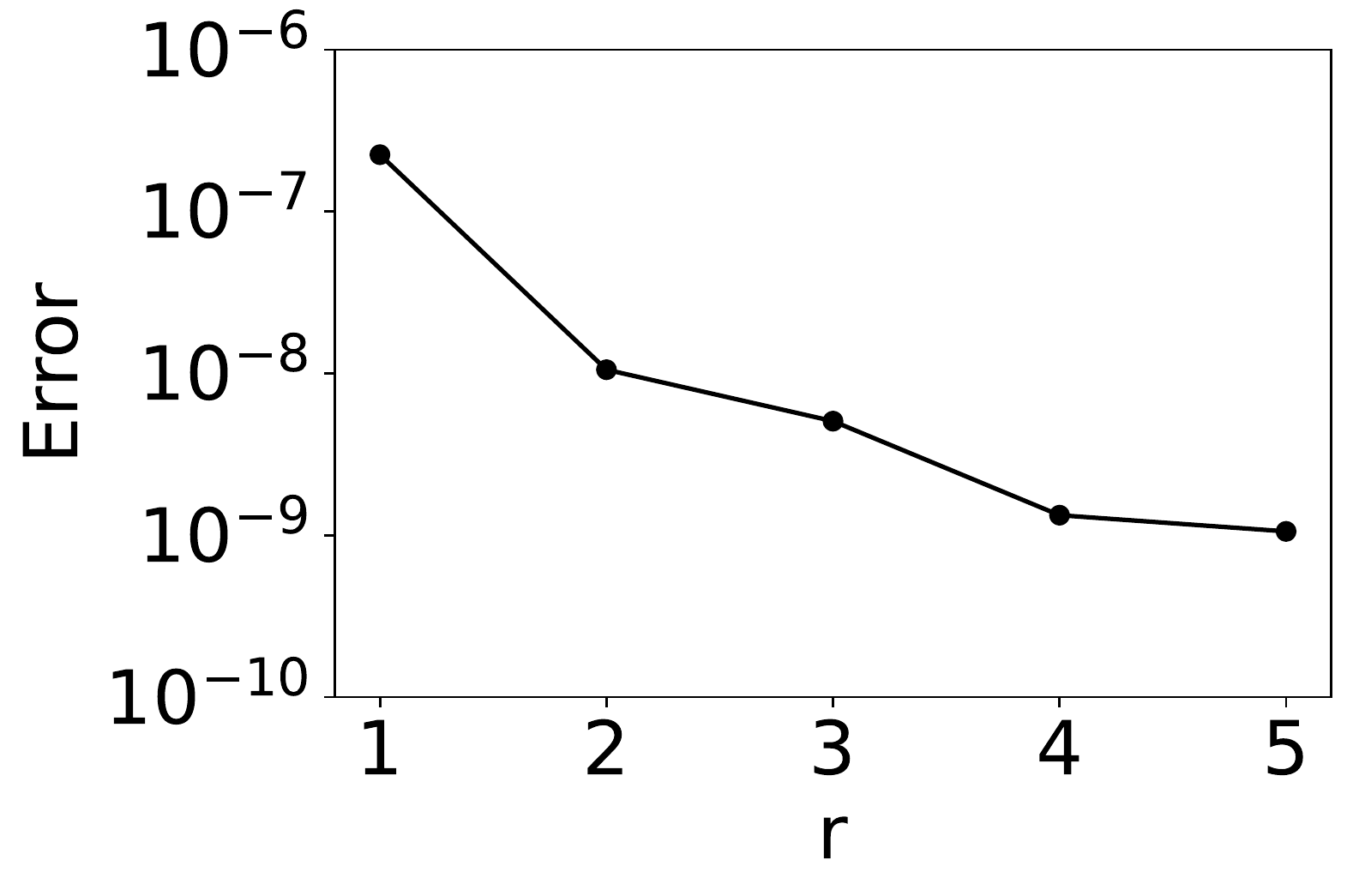}}
  \caption{Maximum absolute error in moment estimation. Relative errors give a similar plot.}
  \label{img:barycenter-max-rel-err-moms}
\end{subfigure}%
~
\begin{subfigure}{0.47\textwidth}
  \centering
  \includegraphics[width=0.9\textwidth]{{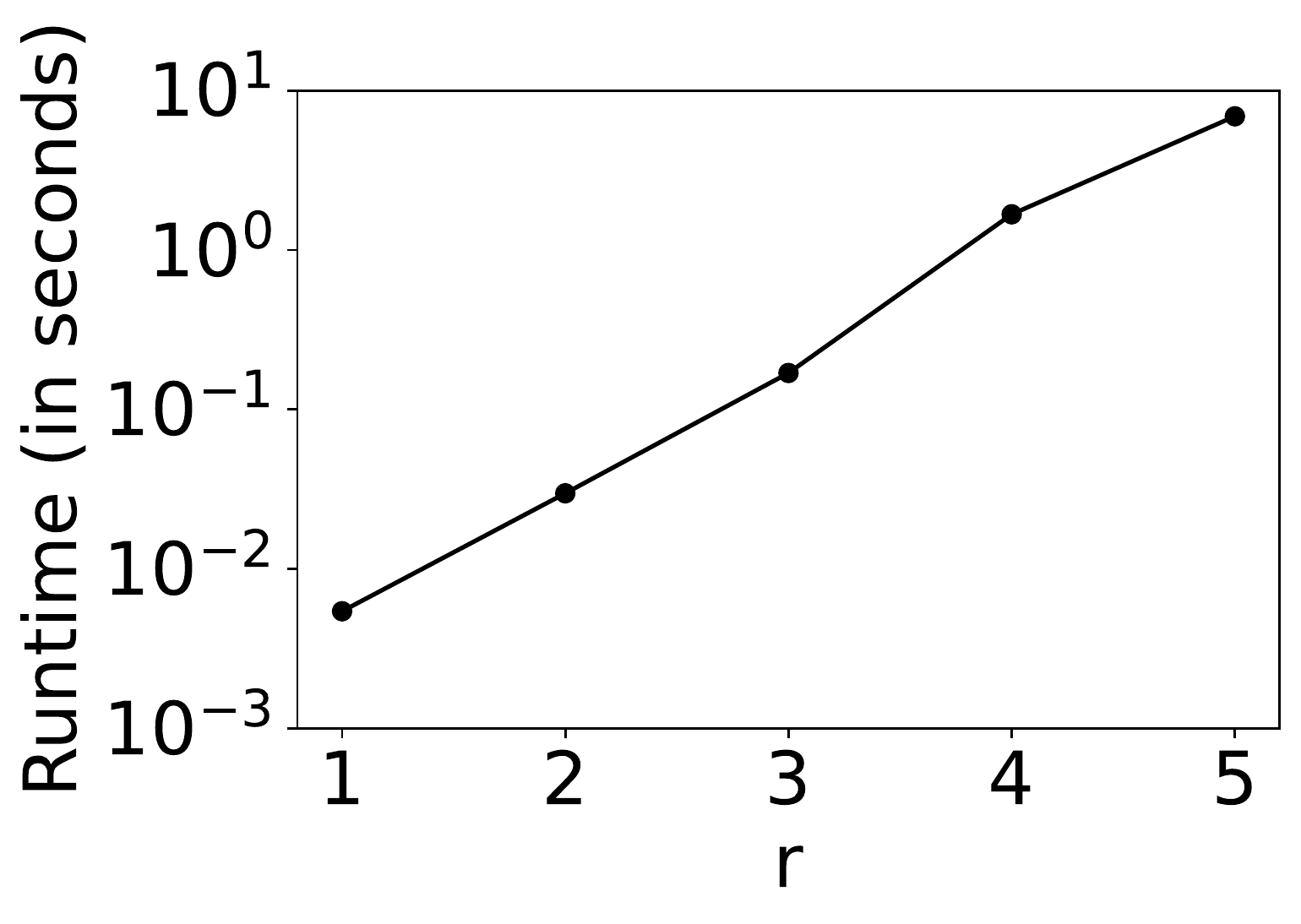}}
  \caption{Runtimes.}
  \label{img:barycenter-runtime}
\end{subfigure}%

  \caption{Barycenter of four smileys.}
  \label{img:barycenter-smileys}
  \end{figure}

  From the moments that our approach provides, we can reconstruct the support of the barycenter by computing the Christoffel function, and applying thresholding techniques discussed in Section \ref{sec:christoffel}. Figure \ref{img:barycenter-CD-smileys} shows the Christoffel function for increasing relaxation orders $r$. The figure also depicts the support that we obtain by thresholding this function with parameter $\gamma_r=0.3$ for all relaxation orders $r$. We may note how the estimation of the support improves as $r$ grows. For $r=4$ and $r=5$ it is possible to ``discover'' that the measure has several non-connected components such as the mouth and the eyes.

  We can repeat the same experiment by replacing the smileys with stars or pacmen. In this case, we obtain very similar results as the ones from Figure \eqref{img:barycenter-smileys} so we do not include them for the sake of brevity. We however plot the Christoffel function and the obtained support after thresholding (see Figures \ref{img:barycenter-CD-stars} and \ref{img:barycenter-CD-pacmans}). We observe that order $r=3$ is already enough to discover that the star has five corners. In the case of the pacman, the method is able to discover a very fair estimation of the support for $r=4$ and $r=5$ but it only gives a coarse approximation of the mouth (approximating this part better would have required higher relaxation orders).

\begin{figure}
  \begin{subfigure}{\textwidth}
    \centering
    \includegraphics[width=0.9\textwidth]{{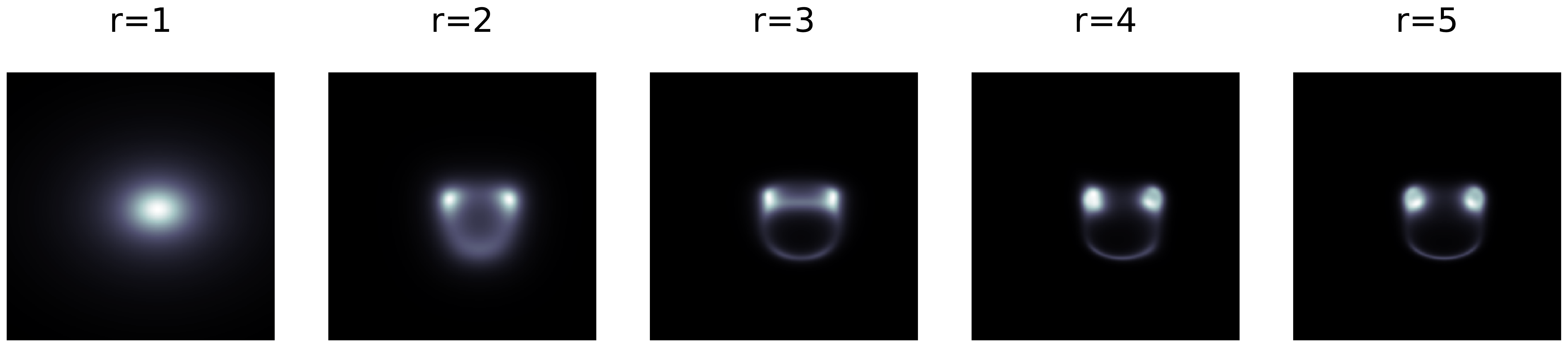}}
    \includegraphics[width=0.9\textwidth]{{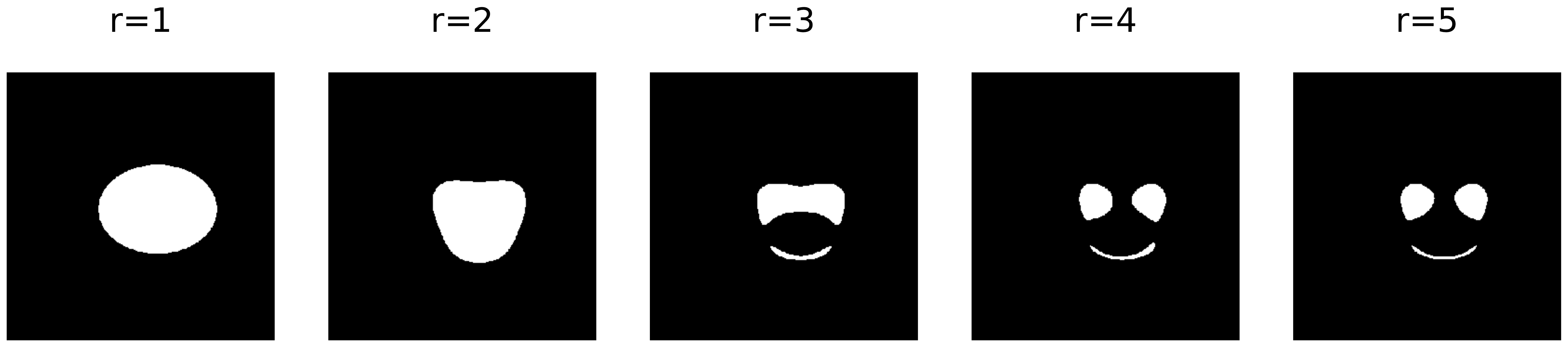}}
    \caption{Barycenter of smileys.}
    \label{img:barycenter-CD-smileys}
  \end{subfigure}%
  \\
  \begin{subfigure}{\textwidth}
    \centering
    \includegraphics[width=0.9\textwidth]{{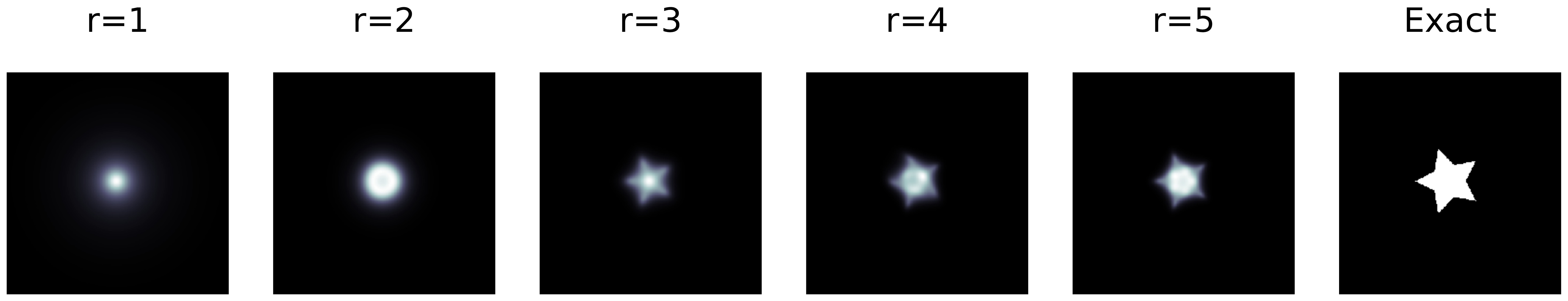}}
    \includegraphics[width=0.9\textwidth]{{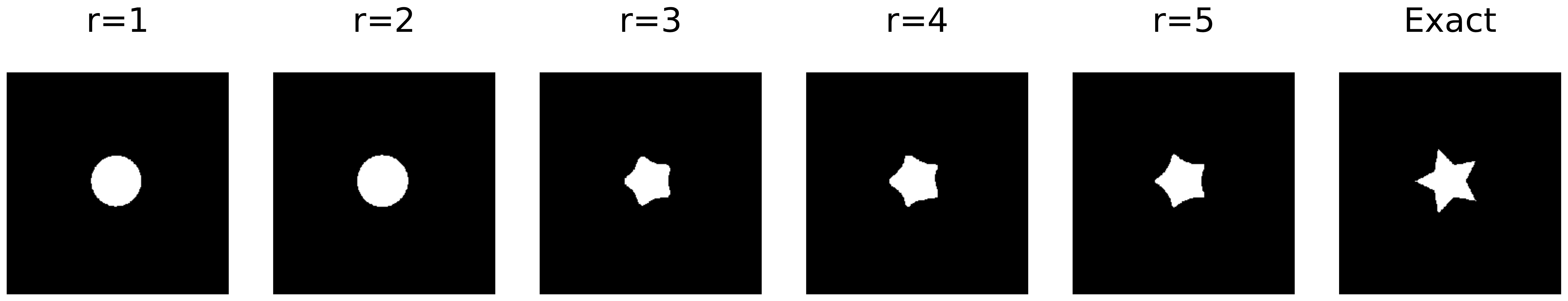}}
    \caption{Barycenter of stars.}
    \label{img:barycenter-CD-stars}
  \end{subfigure}%
  \\
  \begin{subfigure}{\textwidth}
    \centering
    \includegraphics[width=0.9\textwidth]{{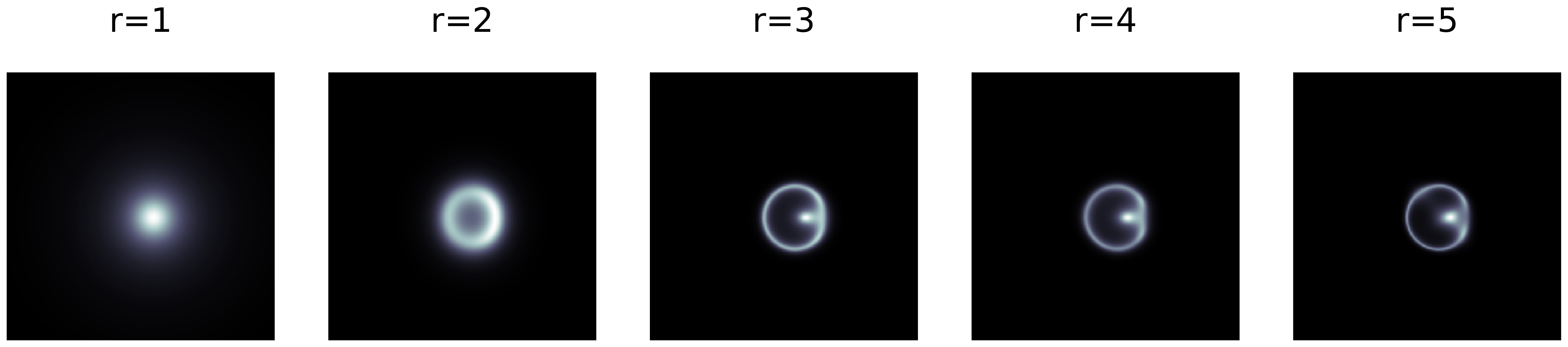}}
    \includegraphics[width=0.9\textwidth]{{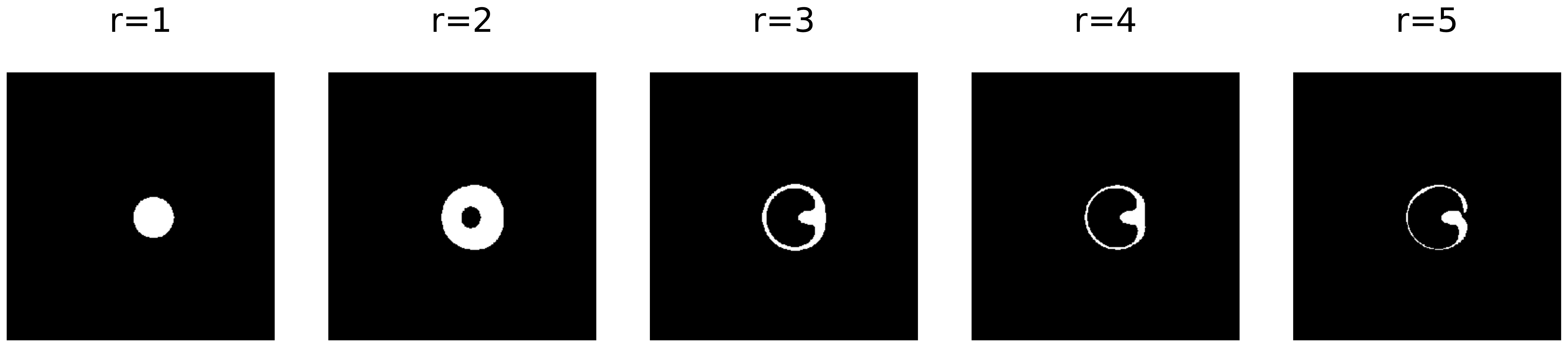}}
    \caption{Barycenter of pacmen.}
    \label{img:barycenter-CD-pacmans}
  \end{subfigure}%

    \caption{Estimation of the support of the barycenter. For each subfigure: Christoffel function (top), and support after thresholding (bottom)}
    \label{img:baryceter-CD}
    \end{figure}



\subsection{Gromov-Wasserstein discrepancy and barycenters}

Here we illustrate the computation of Gromov-Wasserstein discrepancies and barycenters. 
For our numerical tests, we consider empirical measures $\mu_1$ to $\mu_4$ associated with happy and sad smileys, see Figure \ref{img:smileys-GW22}.  Each measure corresponds to $1000$ independent samples from a mixture of three uniform measures with equal weights 1/3, the first two measures being supporting on the eyes, the third measure having the mouth as support. The mouth is here an algebraic set with zero Lebesgue measure. Measure $\mu_2$ (resp. $\mu_4$) is the push-forward of $\mu_1$ (resp. $\mu_3$) by an isometry, so that 
$GW_{2,2}^2(\mu_1,\mu_2) = 
GW_{2,2}^2(\mu_3,\mu_4) = 0$.  In this section, for the formulation of moment problems, we relied on Matlab libraries  tensap \cite{NGG2020} and GloptiPoly \cite{henrion2009gloptipoly}.

\begin{figure}[h]
    \centering
    \begin{subfigure}{.24\textwidth} \centering
    \includegraphics[scale=0.12]{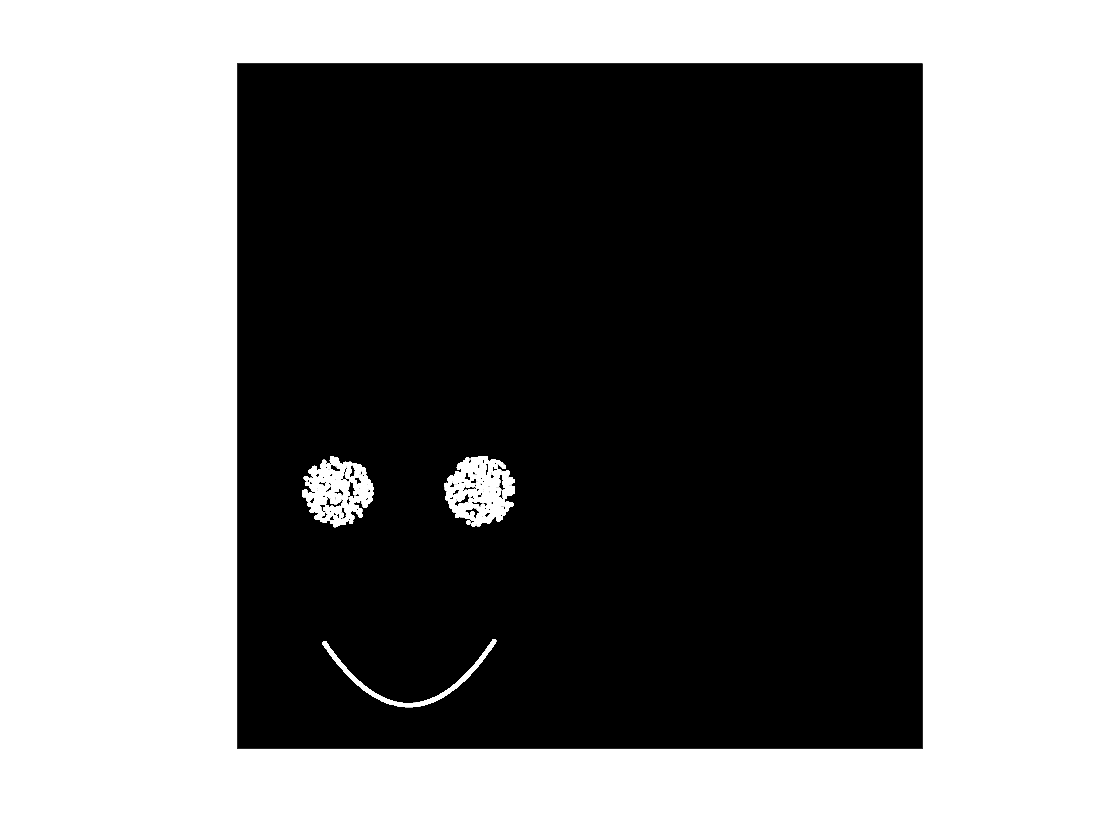}
    \caption{$\mu_1$}
    \end{subfigure}
    \begin{subfigure}{.24\textwidth}\centering
    \includegraphics[scale=0.12]{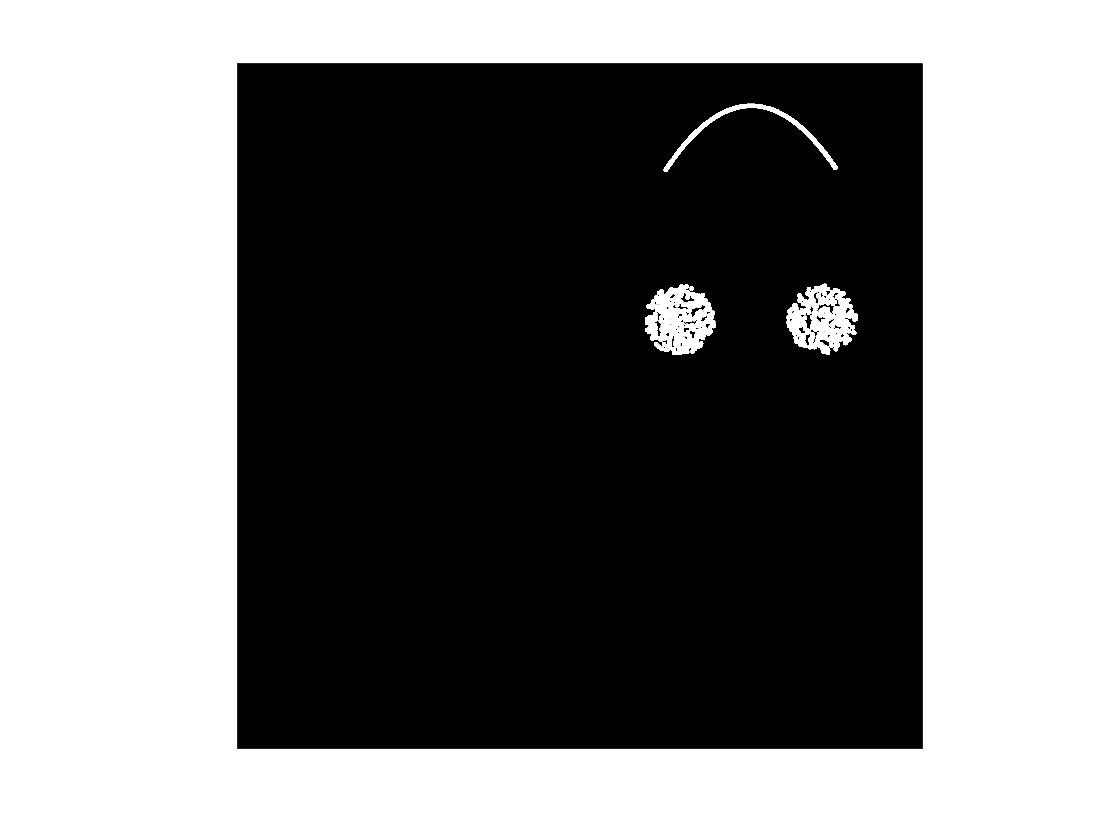}
    \caption{$\mu_2$}
    \end{subfigure}
    \begin{subfigure}{.24\textwidth}  \centering
     \includegraphics[scale=0.12]{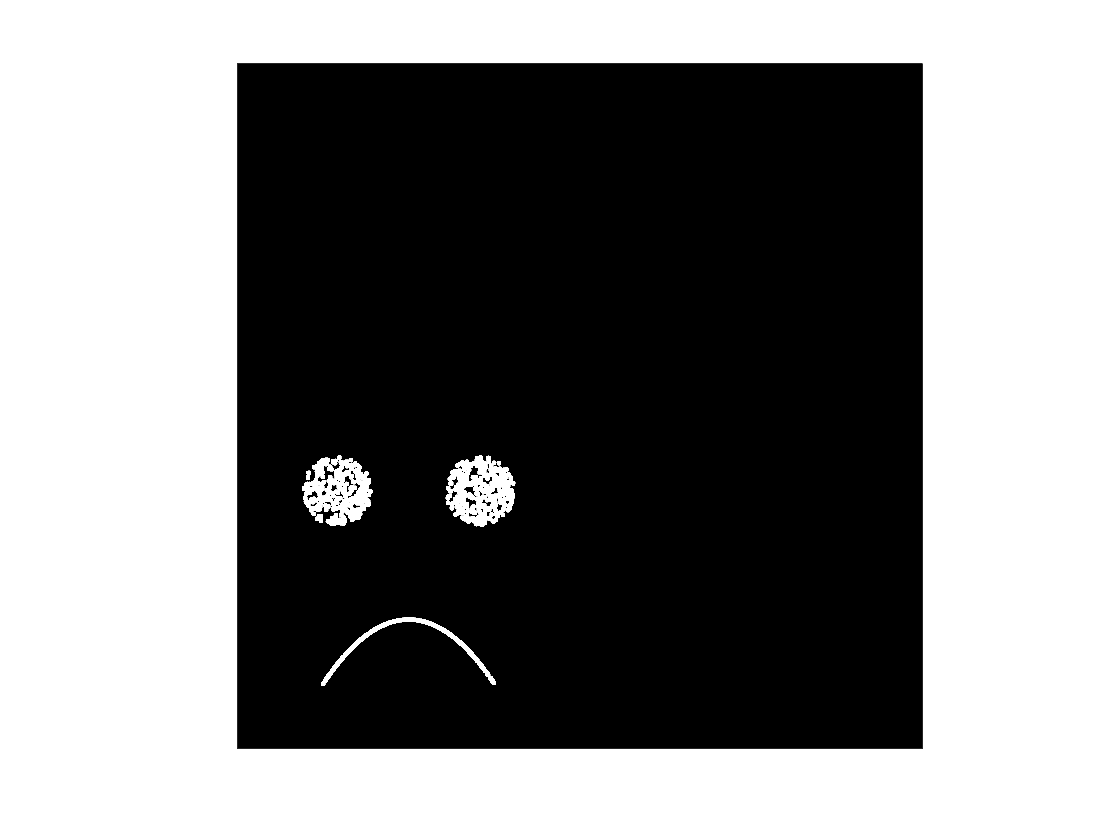}
    \caption{$\mu_3$}
    \end{subfigure}
    \begin{subfigure}{.24\textwidth}  \centering
    \includegraphics[scale=0.12]{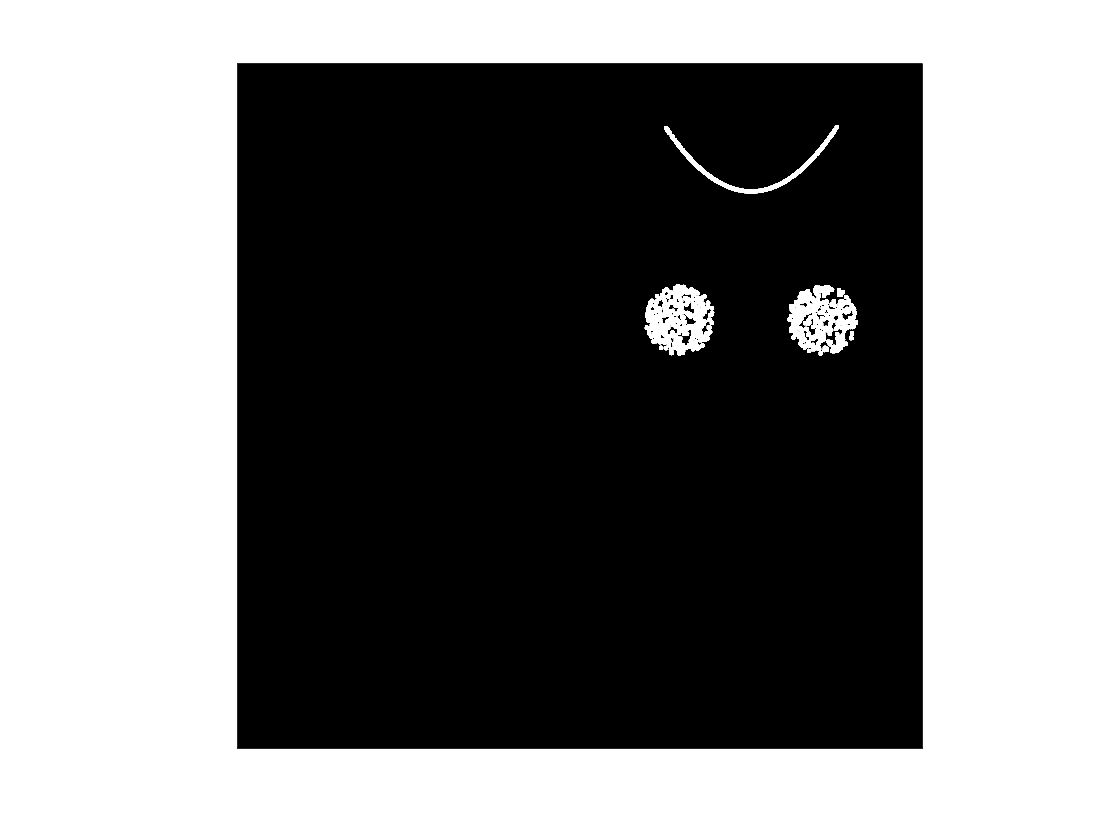}
   \caption{$\mu_4$}
    \end{subfigure}
    \caption{Empirical measures $\mu_1$ to $\mu_4$ for the examples with Gromov-Wasserstein discrepancies.}
    \label{img:smileys-GW22}
\end{figure}

\subsubsection{Gromov-Wasserstein discrepancy $GW_{2,2}$}
We here illustrate the estimation of the discrepancies $GW_{2,2}(\mu_i,\mu_j)$.
For a given relaxation order $r$, we initialize the truncated moment sequence $y^{(0)}$ with the truncated moments $m(\mu_i \otimes \mu_j)$  of the product measure $\mu_i \otimes \mu_j$. Then we construct a sequence of truncated moments $y^{(k)}$, $k\ge 1$, by a fixed point algorithm,  where $y^{(k)}$   minimizes   
$ y\mapsto L_{aug}^{GW_{2,2}}(y \otimes y^{(k-1)})$ over the truncated moment sequences $y$ satisfying the moment sequence condition and marginal constraints. As shown in Figure \ref{fig:GW22_convergence_fixed_point} for a given relaxation order, the fixed point algorithm converges rapidly, after roughly 4 to 5 iterations. Note that for $(i,j)$ equal to $ (1,2)$ and $(3,4)$, the objective function converges to a plateau of order $10^{-13}$, very close to zero in double precision.
\begin{figure}[h]
\centering
 \includegraphics[scale=0.2]{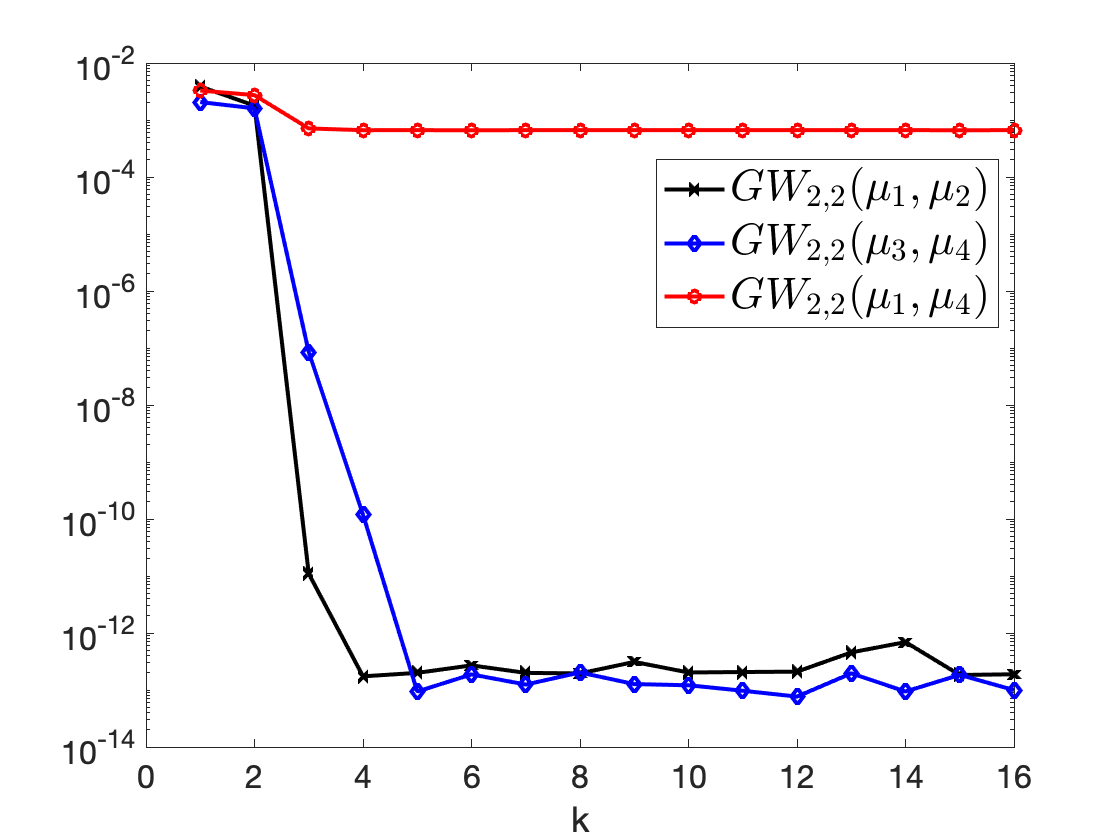} 
\caption{Convergence of $L_{aug}^{GW_{2,2}}(y^{(k)} \otimes y^{(k)}) = L^{GW_{2,2}}( y^{(k)}) $ for the estimation of 
$GW_{2,2}^2(\mu_1,\mu_2)$, $GW_{2,2}^2(\mu_3,\mu_4)$, $GW_{2,2}^2(\mu_1,\mu_4)$, for relaxation order $r=5$.}
\label{fig:GW22_convergence_fixed_point}
\end{figure}

Next, we provide estimations of discrepancies $GW_{2,2}(\mu_i,\mu_j)$ obtained at convergence of the fixed point algorithm. 
In Table \ref{tab:GW22_r}, we show the estimations of $GW_{2,2}(\mu_i,\mu_j)$ obtained for different relaxation orders. 
The obtained estimations for  $GW_{2,2}(\mu_1,\mu_2)$ and $GW_{2,2}(\mu_3,\mu_4)$ converge slowly with the relaxation order but are very small already for very small relaxation orders.  
The estimation of $GW_{2,2}(\mu_1,\mu_4)$ rapidly converges with the relaxation order. 
 
\begin{table}[h]
\centering
\begin{tabular}{|c|c|c|c|c|c|}
\hline
& $r=2$& $r =3$ & $r=4$ & $r=5$ & $r=6$\\
\hline 
$GW_{2,2}(\mu_1,\mu_2)$ &$1.3\, 10^{-6}$& $9.6\, 10^{-7}$ & $7.3\, 10^{-7}$ & $4.3\, 10^{-7}$ & $5.2\, 10^{-7} $
\\
\hline
$GW_{2,2}(\mu_3,\mu_4)$ &$2.3\, 10^{-6}$& $1.8\, 10^{-6}$ & $1.0\, 10^{-6}$ & $3.1\, 10^{-7}$ & $3.7\, 10^{-7} $
\\
\hline
$GW_{2,2}(\mu_1,\mu_4)$ &$2.21\, 10^{-2}$& $2.47\, 10^{-2}$ & $2.56\, 10^{-2}$ & $2.57\, 10^{-2}$ & $2.57\, 10^{-2} $
\\
\hline
\end{tabular}
\caption{Estimations of $GW_{2,2}(\mu_1,\mu_2)$, $GW_{2,2}(\mu_3,\mu_4)$ and  $GW_{2,2}(\mu_1,\mu_4)$ for relaxation orders $r=2$ to $6$.}
\label{tab:GW22_r}
\end{table}
%
%

\subsubsection{Gromov-Wasserstein barycenters $GW_{2,2}$}

 We now turn to the computation of Gromov-Wassertein barycenters, using discrepancy $GW_{2,2}$. 
 We consider the computation of the barycenters of the empirical measures $\mu_1$ and $\mu_2$ illustrated on Figure \ref{img:smileys-GW22}. The experiments are here for illustrative purpose.

For a given relaxation order $r$, we have to solve 
the optimization problem \eqref{gromov-barycenter-Ly} over truncated sequences $y$, $y_1$ and $y_2$, where $y_1$ has as  marginals $y$ and the truncated moments of $\mu_1$, and $y_2$ has as marginals $y$ and the truncated moments of $\mu_2$. The objective functional can be rewritten $\lambda L_{aug}^{GW_{2,2}}(y_1 \otimes y_1) + (1-\lambda) L_{aug}^{GW_{2,2}}(y_2 \otimes y_2)$, with $\lambda \in [0,1]$. For the solution of the optimization problem, we rely on a fixed point algorithm which constructs sequences  of truncated moment $y^{(k)}$, $y^{(k)}_1$ and $y^{(k)}_2$, $k\ge 1$,  such that  $(y^{(k)},y^{(k)}_1,y^{(k)}_2)$ minimizes 
$(y,y_1,y_2) \mapsto \lambda L_{aug}^{GW_{2,2}}(y_1 \otimes y_1^{(k)}) + (1-\lambda) L_{aug}^{GW_{2,2}}(y_2 \otimes y_2^{(k)})$ over truncated sequences satisfying marginal constraints and moment sequence conditions. For the initialization of $y^{(0)}$, we take the truncated moments of either $\mu_1$ or $\mu_2$ (depending on the value of $\lambda$) and for 
$y^{(0)}_1$ (resp. $y^{(0)}_2$), we take the tensor product of $y^{(0)}$ and the truncated moments of $\mu_1$ (resp. $\mu_2$). This algorithm converges rather slowly and should clearly be improved. However, it allows us to illustrate the  potential of the proposed  approach. 
The results are given at iteration $100$.
 
%
 Figures \ref{fig:GWbarycenter-christoffel} and \ref{fig:GWbarycenter-support} illustrate respectively the estimated supports and Christoffel functions of barycenters $\bary((\mu_1,\mu_2),\lambda)$ for different values of $\lambda$ and orders of relaxation $r$. We observe a rather fast convergence with $r$, at least for $\lambda \notin \{0,1\}$. The obtained barycenters for $\lambda \notin \{0,1\}$.

  \begin{figure}
    \centering
    \setlength{\tabcolsep}{0pt}
 \begin{tabular}{ccccc}
 &   $r=2$ & $r=3$ & $r=4$ & $r=5$ \\
\raisebox{35pt}{$\lambda=0$}
 &  \includegraphics[scale=0.24]{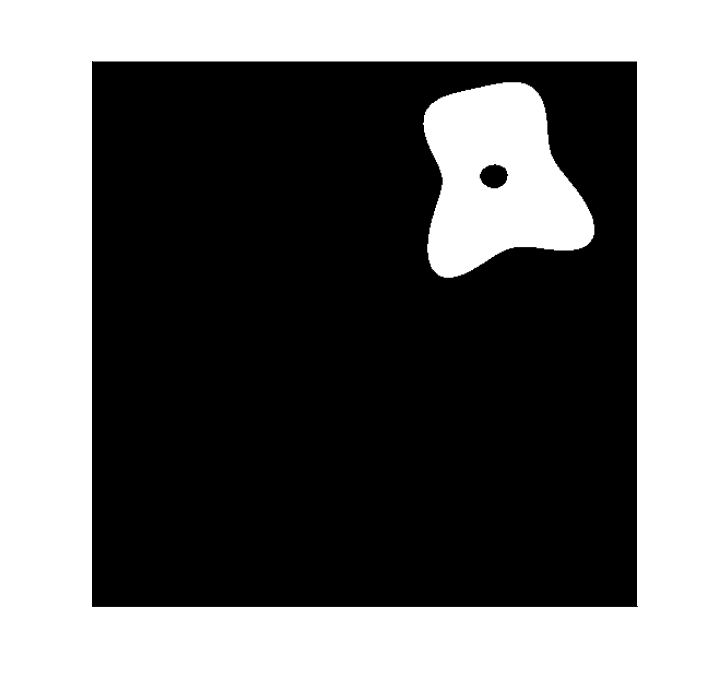}
 & \includegraphics[scale=0.24]{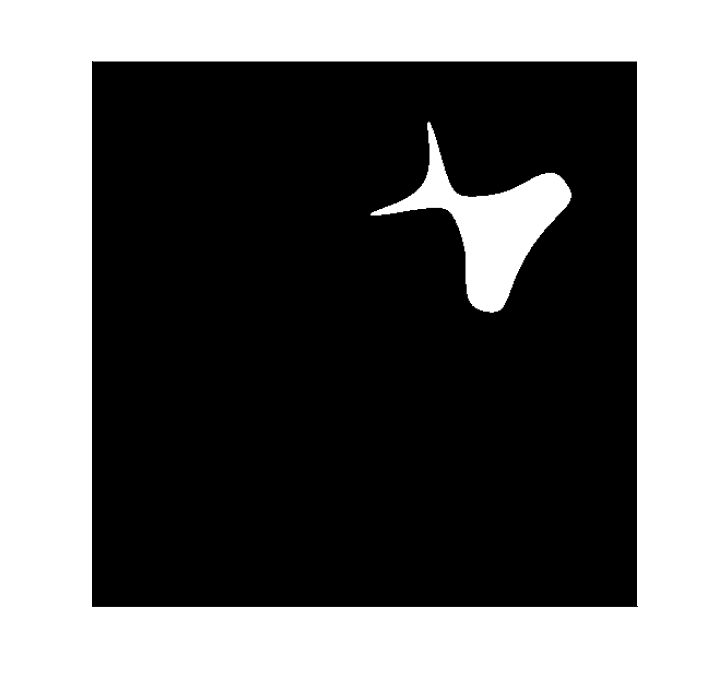} 
 & \includegraphics[scale=0.24]{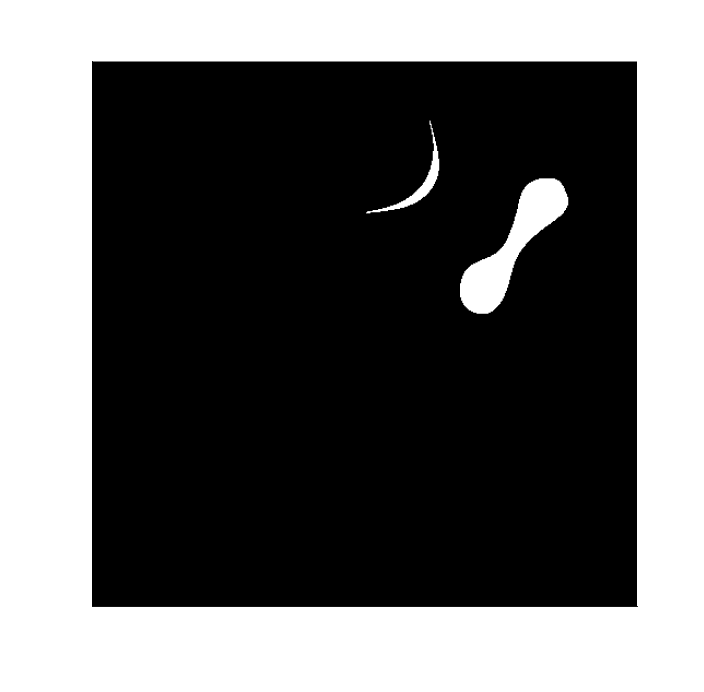} 
 & \includegraphics[scale=0.24]{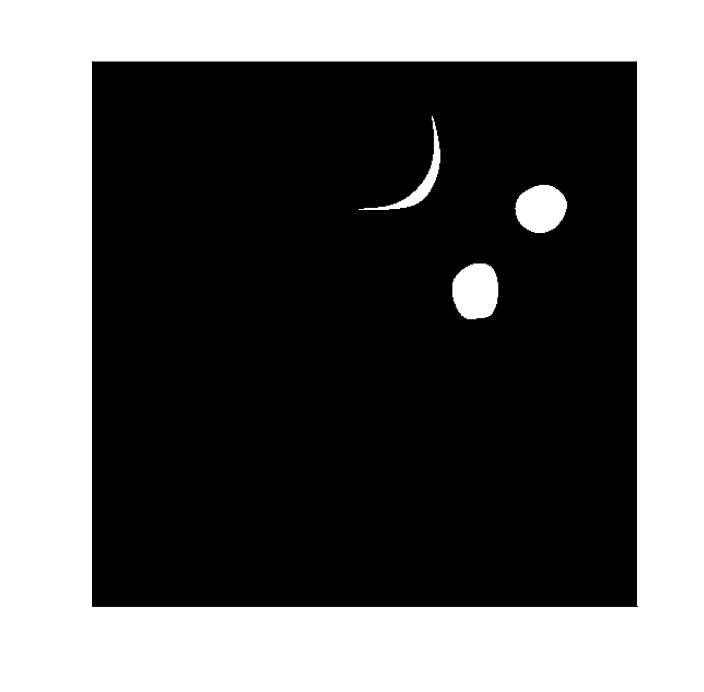} \\
 \raisebox{35pt}{$\lambda=0.25$}
 & \includegraphics[scale=0.24]{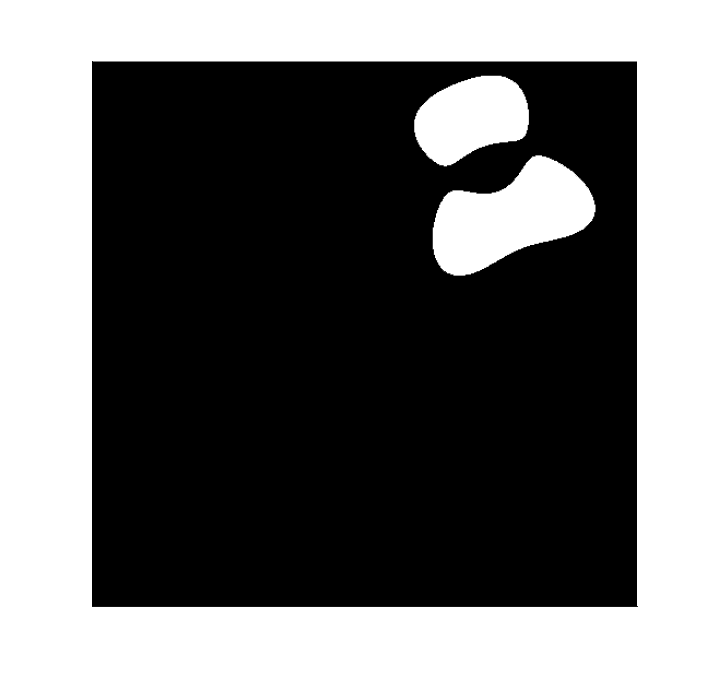}
 & \includegraphics[scale=0.24]{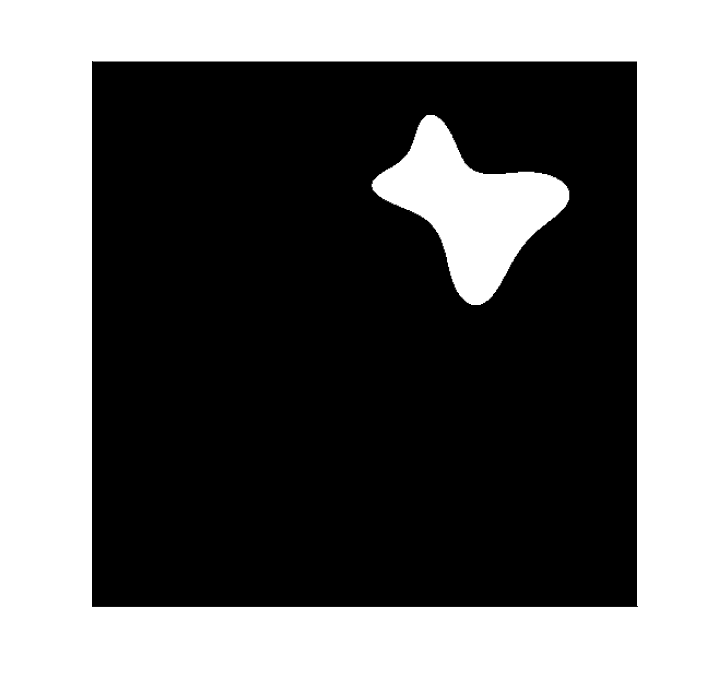} 
 & \includegraphics[scale=0.24]{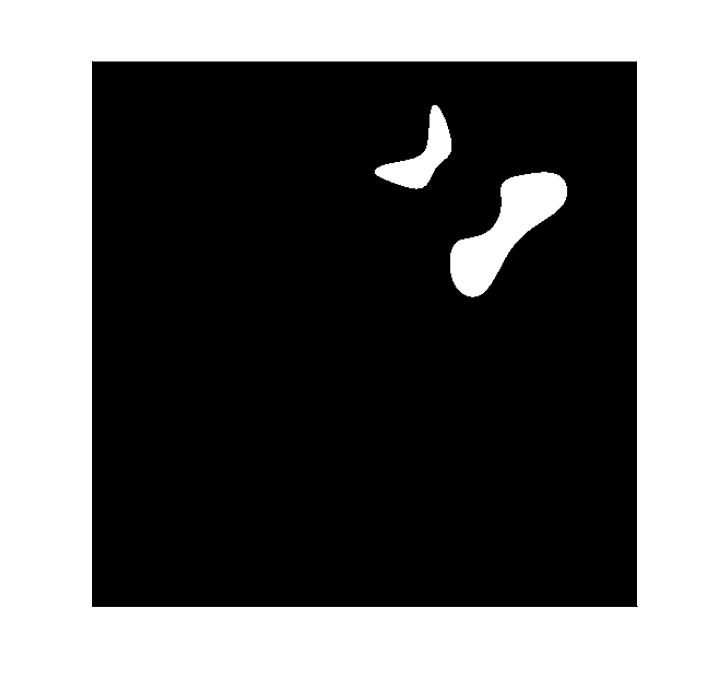} 
 & \includegraphics[scale=0.24]{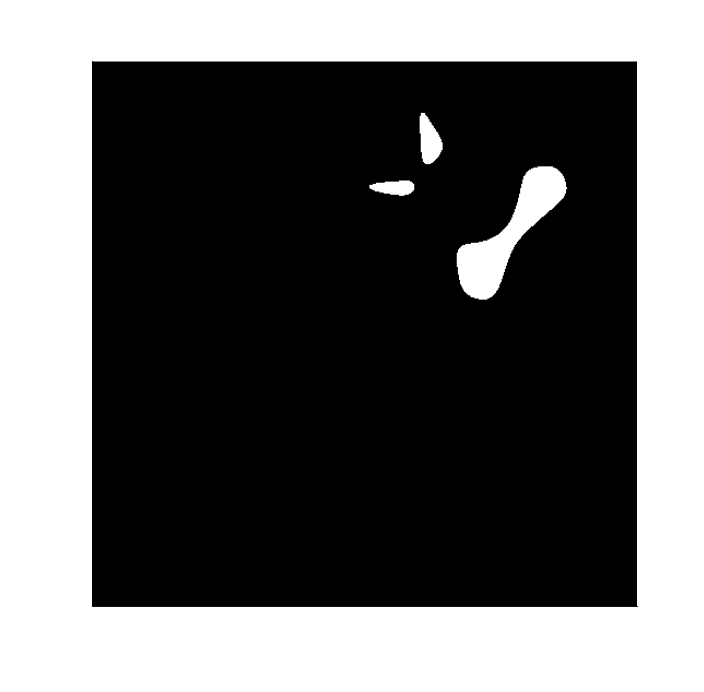} \\
 \raisebox{35pt}{$\lambda=0.5$}
 & \includegraphics[scale=0.24]{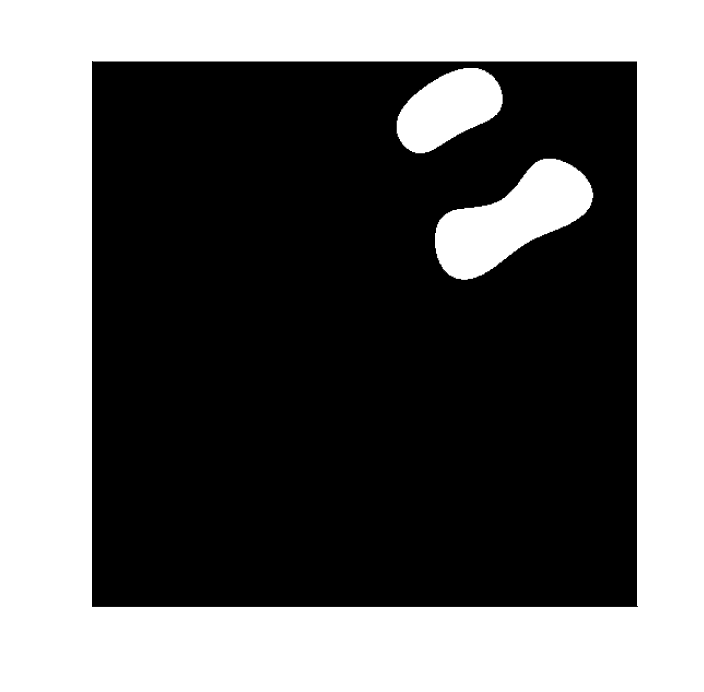}
 & \includegraphics[scale=0.24]{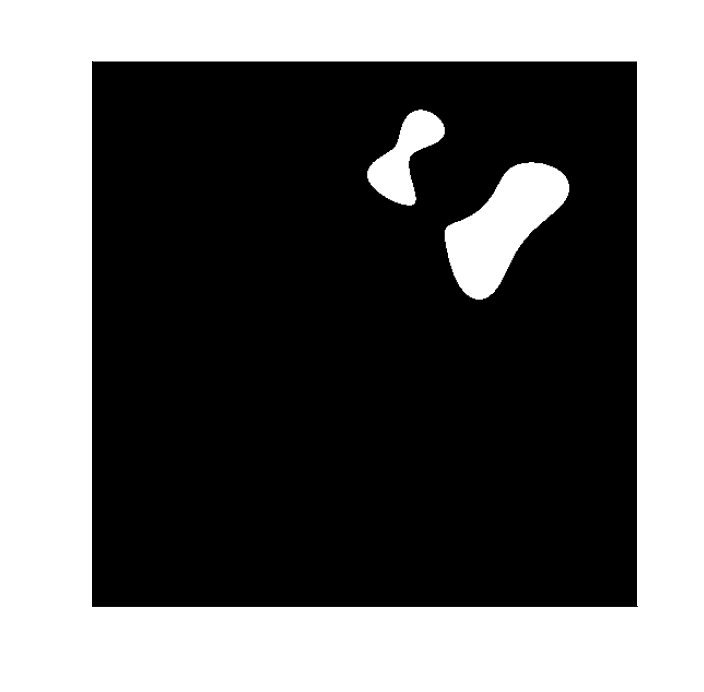} 
 & \includegraphics[scale=0.24]{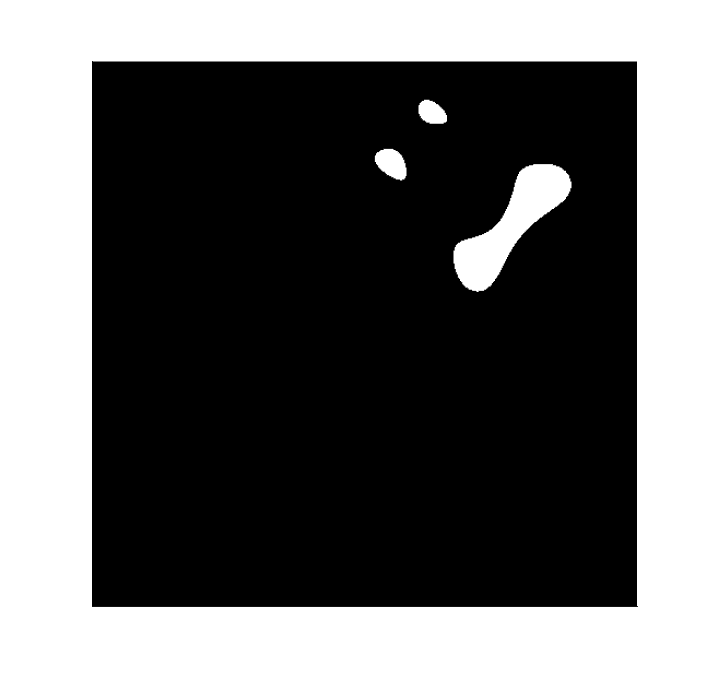} 
 & \includegraphics[scale=0.24]{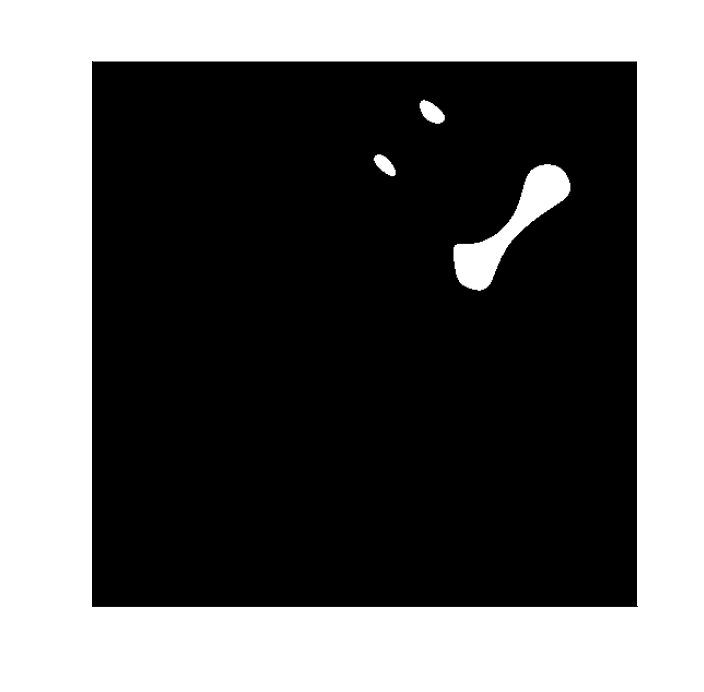} \\
\raisebox{35pt}{$\lambda=0.75$}
 & \includegraphics[scale=0.24]{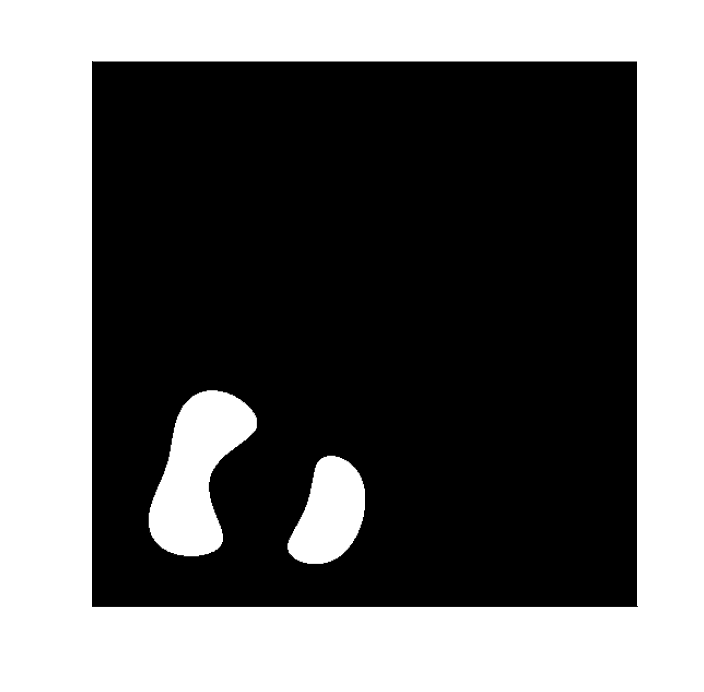}
 & \includegraphics[scale=0.24]{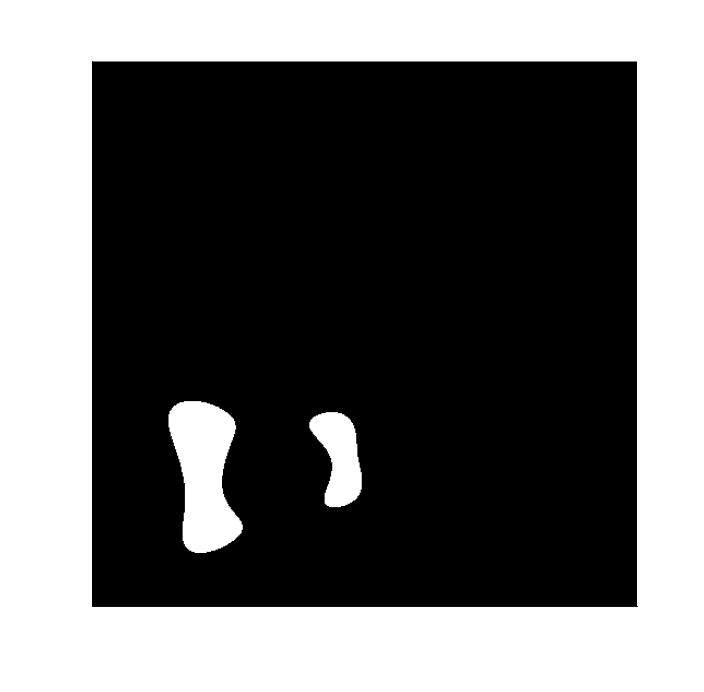} 
 & \includegraphics[scale=0.24]{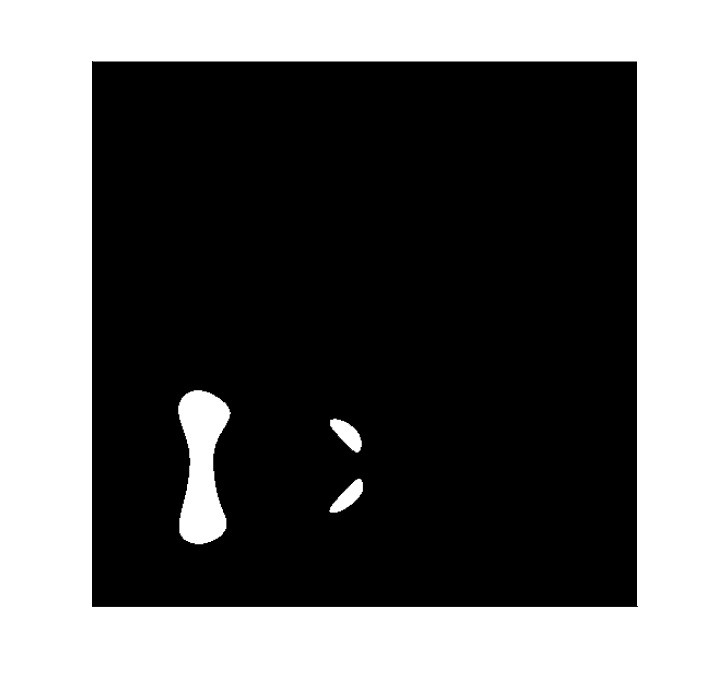} 
 & \includegraphics[scale=0.24]{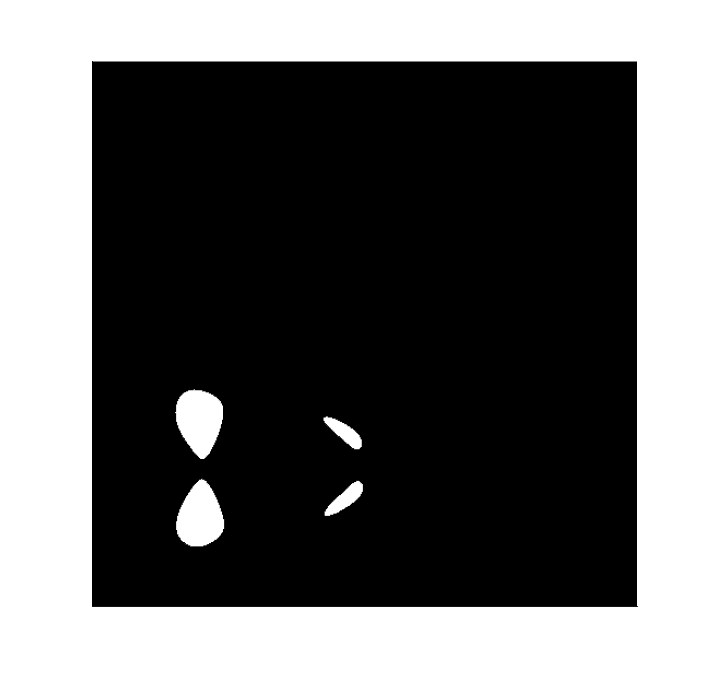} \\
\raisebox{35pt}{$\lambda=1$}
 & \includegraphics[scale=0.24]{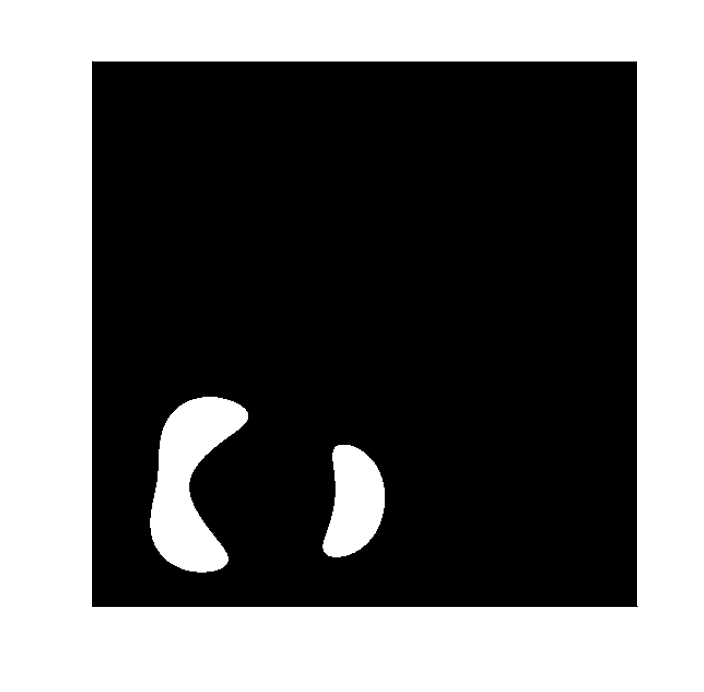}
 & \includegraphics[scale=0.24]{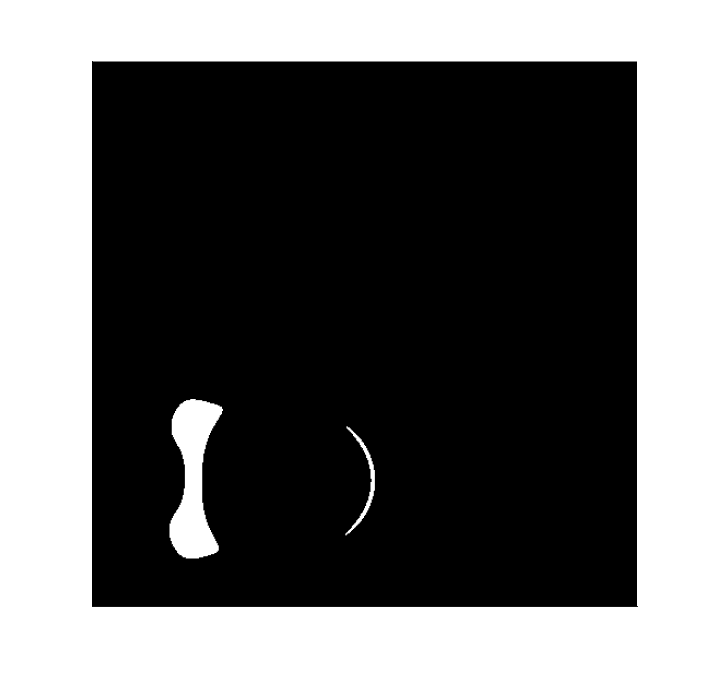} 
 & \includegraphics[scale=0.24]{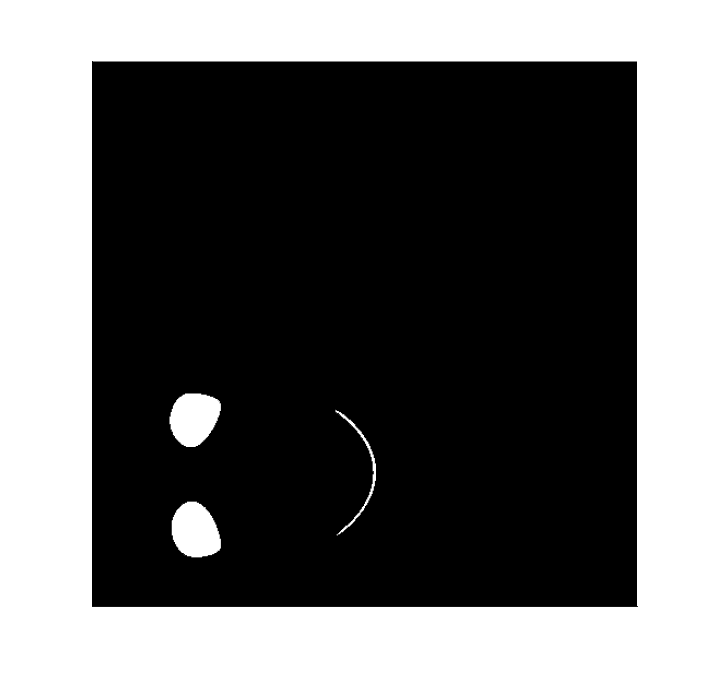} 
 & \includegraphics[scale=0.24]{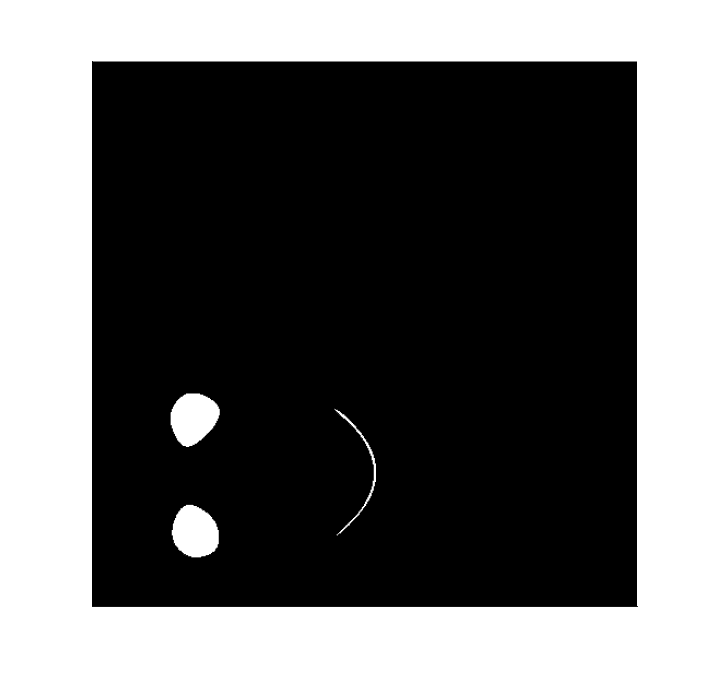} 
   \end{tabular}
 \caption{Gromov-Wasserstein baycenters: estimated support of barycenter $\bary((\mu_1,\mu_2),\lambda)$ for 
 $\lambda \in \{0,0.25,0.5,0.75,1\}$ and different orders of relaxation $r\in \{2,3,4,5\}$.}
 \label{fig:GWbarycenter-support}
\end{figure}

  \begin{figure}
    \centering
    \setlength{\tabcolsep}{0pt}
 \begin{tabular}{ccccc}
 &   $r=2$ & $r=3$ & $r=4$ & $r=5$ \\
\raisebox{35pt}{$\lambda=0$}
 &  \includegraphics[scale=0.24]{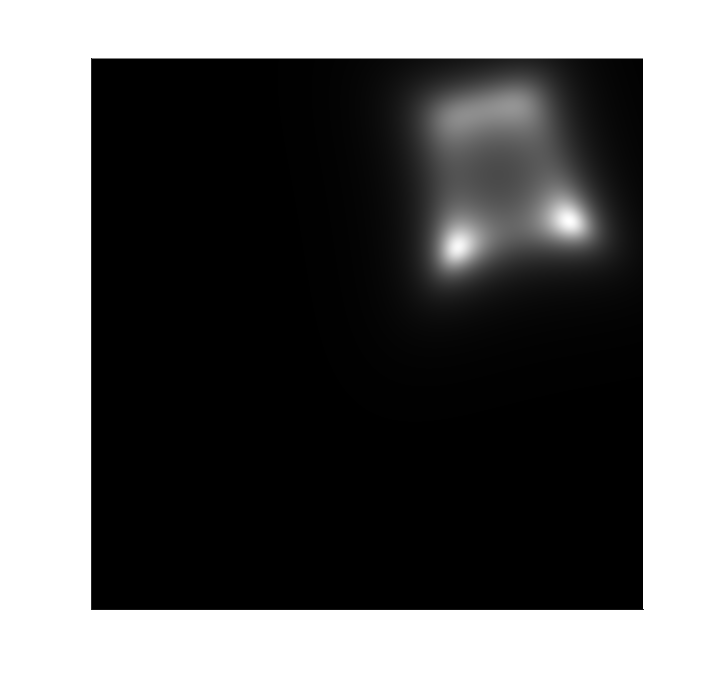}
 & \includegraphics[scale=0.24]{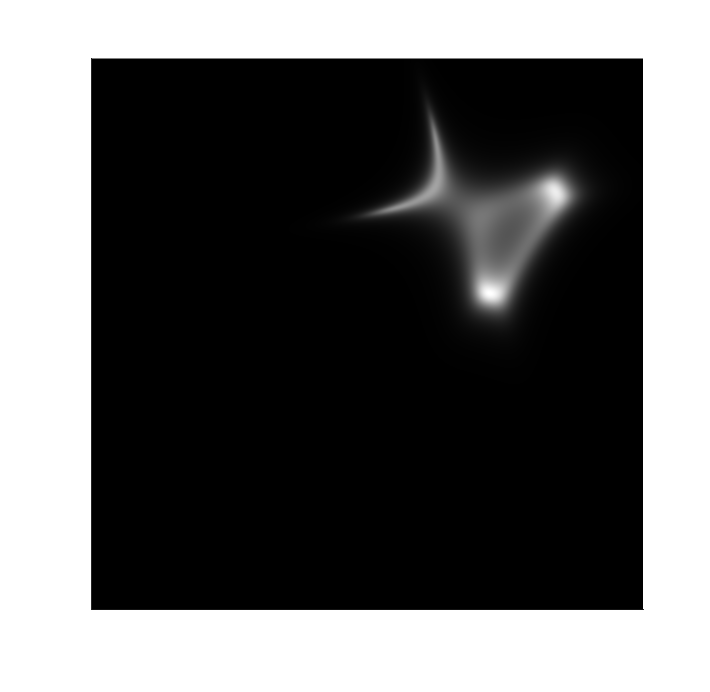} 
 & \includegraphics[scale=0.24]{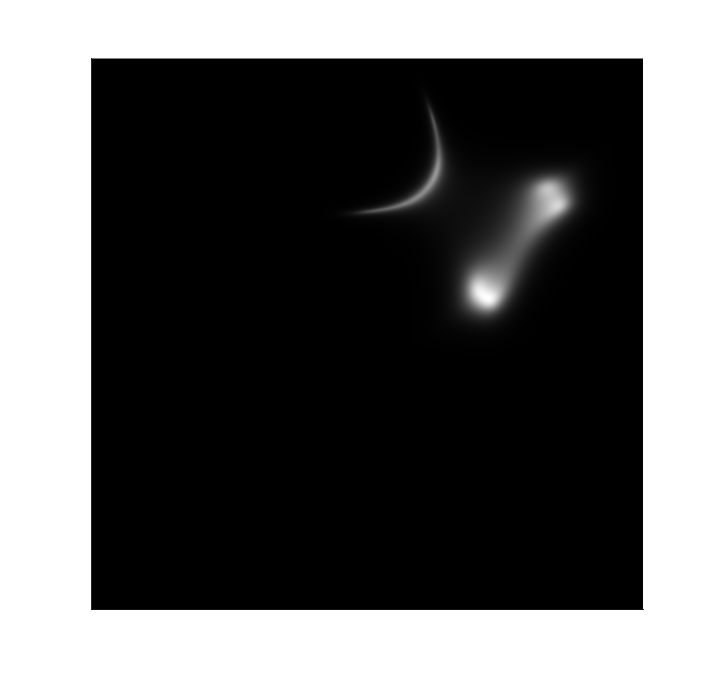} 
 & \includegraphics[scale=0.24]{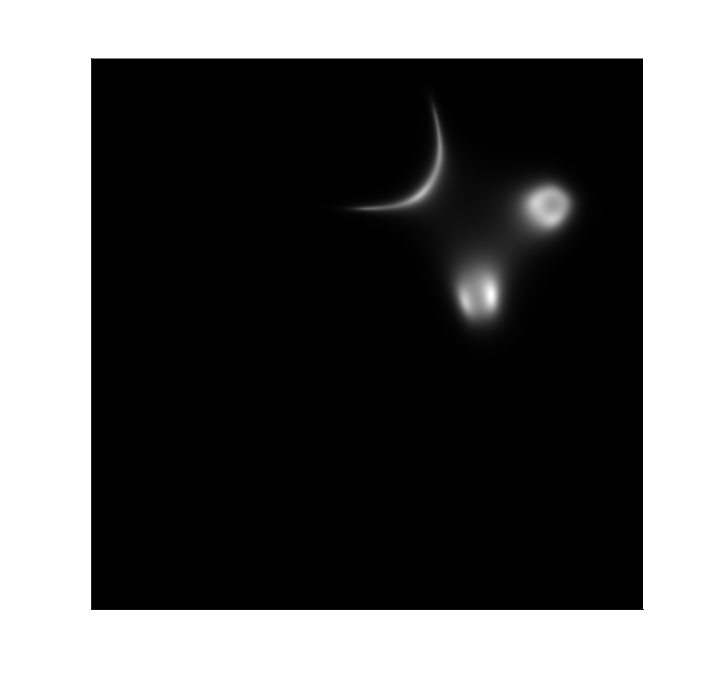} \\
 \raisebox{35pt}{$\lambda=0.25$}
 & \includegraphics[scale=0.24]{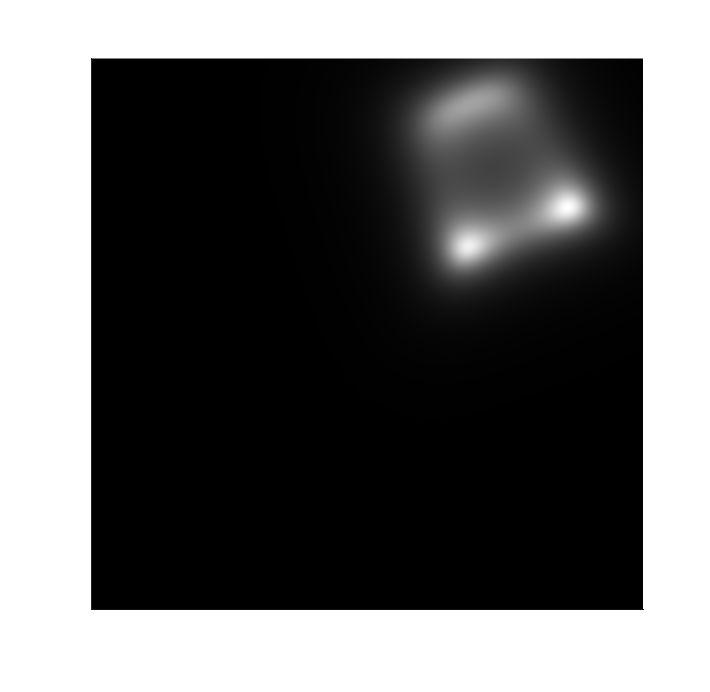}
 & \includegraphics[scale=0.24]{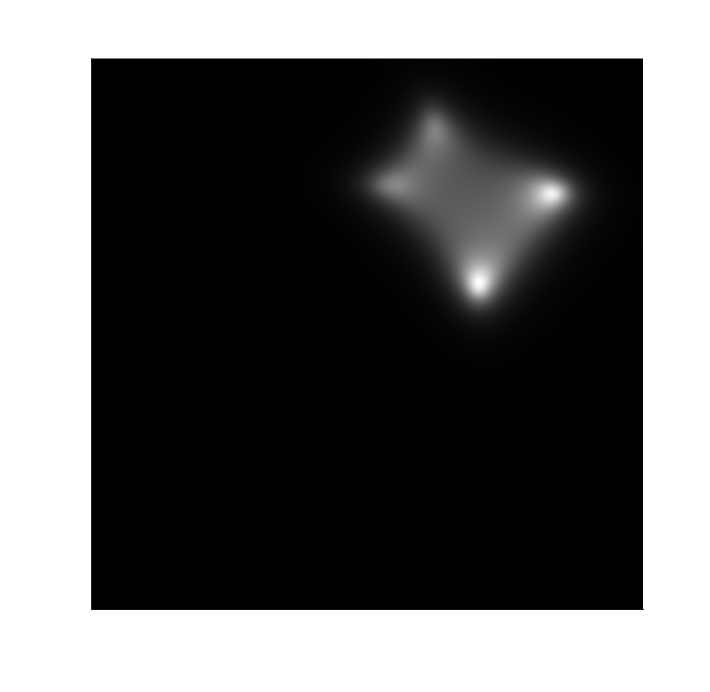} 
 & \includegraphics[scale=0.24]{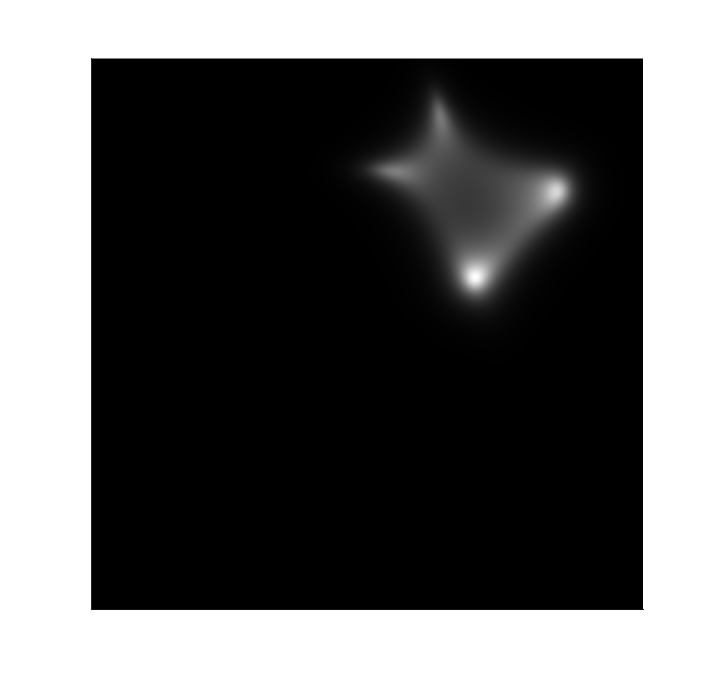} 
 & \includegraphics[scale=0.24]{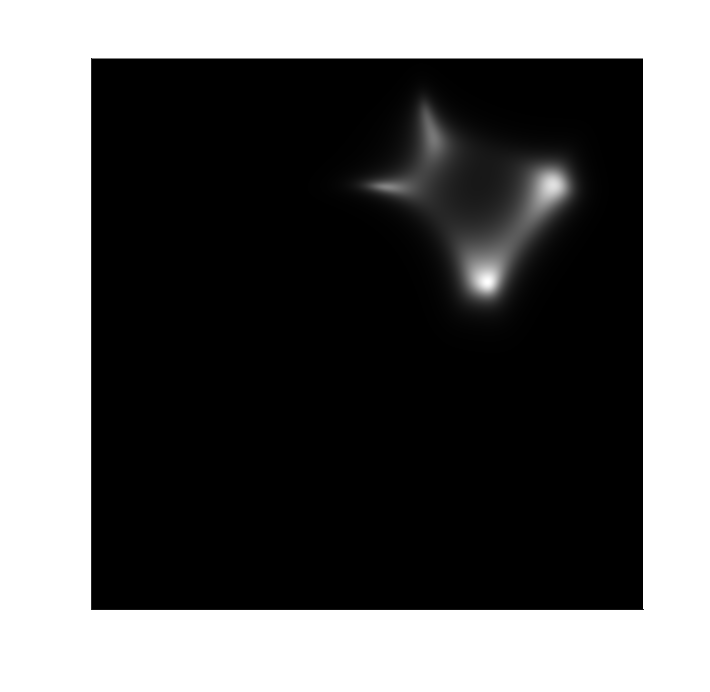} \\
 \raisebox{35pt}{$\lambda=0.5$}
 & \includegraphics[scale=0.24]{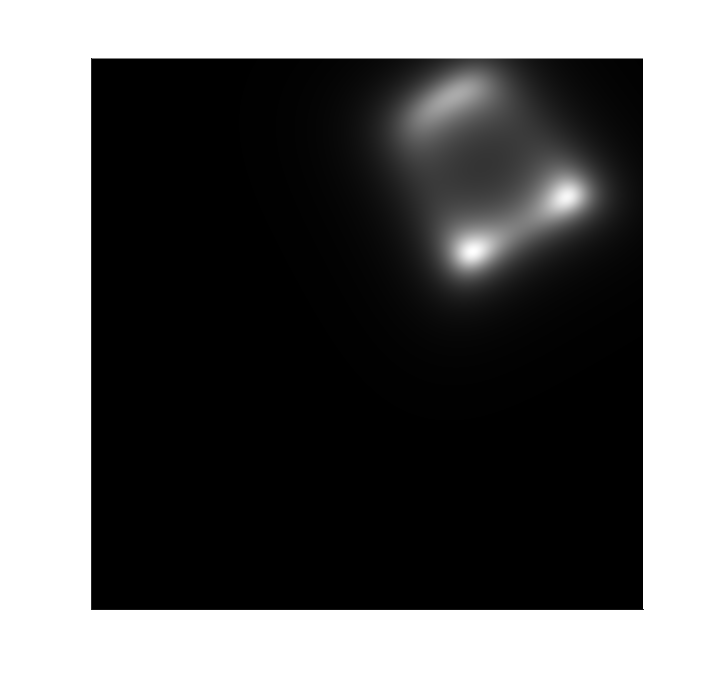}
 & \includegraphics[scale=0.24]{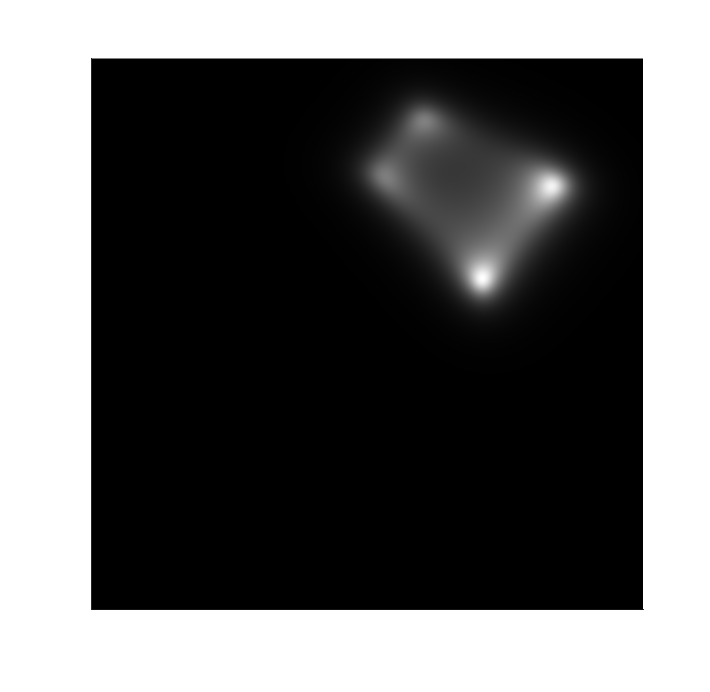} 
 & \includegraphics[scale=0.24]{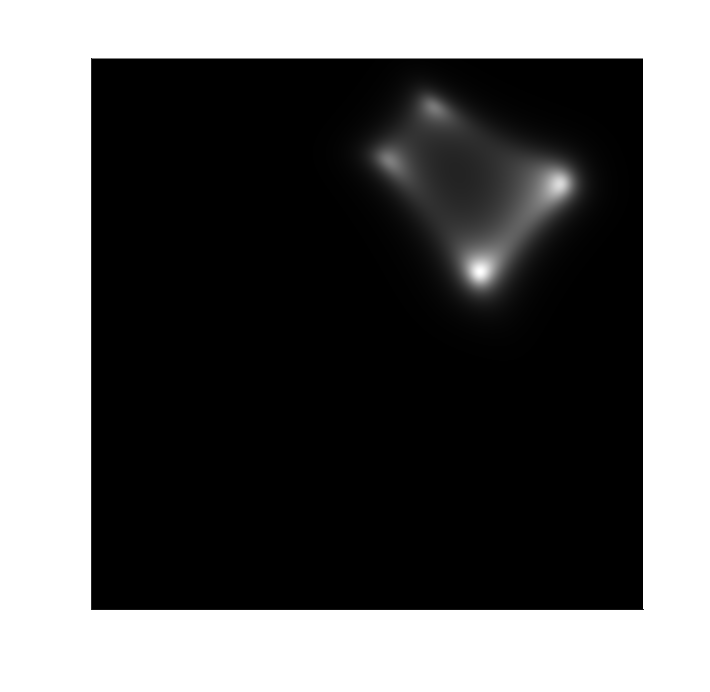} 
 & \includegraphics[scale=0.24]{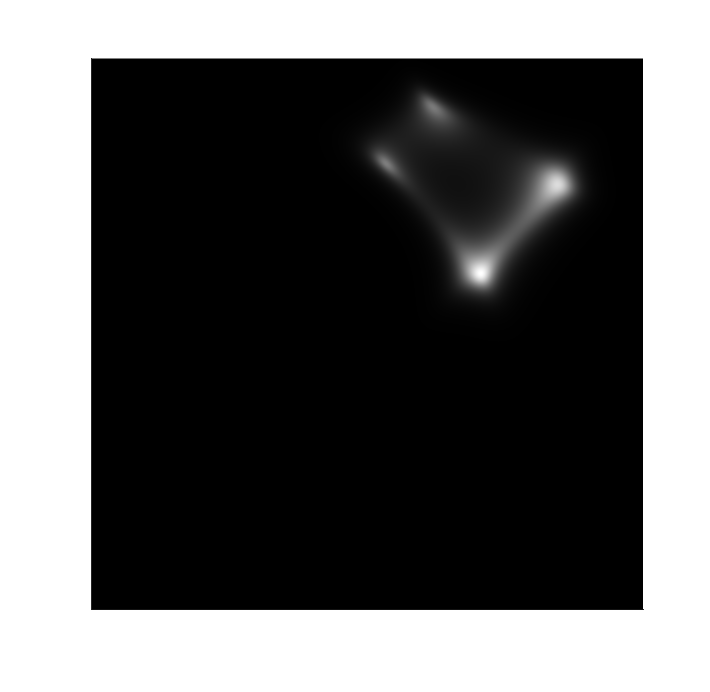} \\
\raisebox{35pt}{$\lambda=0.75$}
 & \includegraphics[scale=0.24]{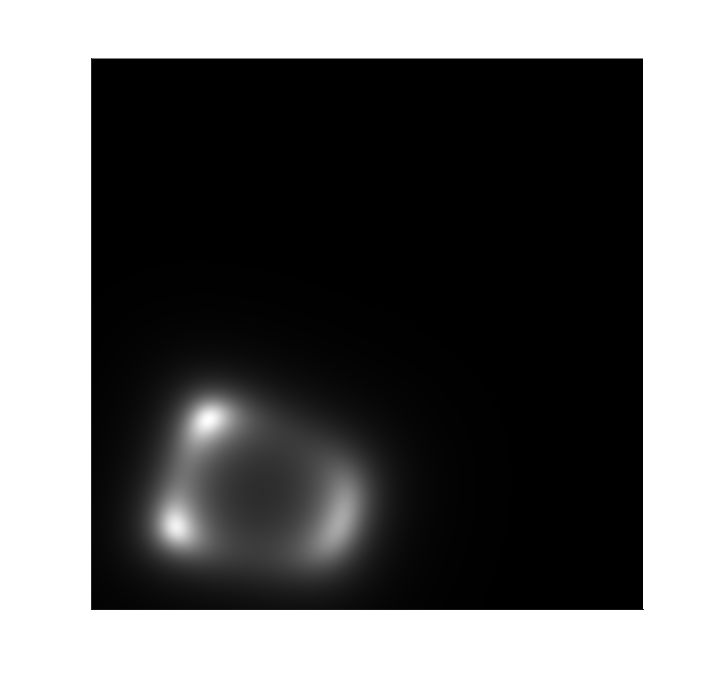}
 & \includegraphics[scale=0.24]{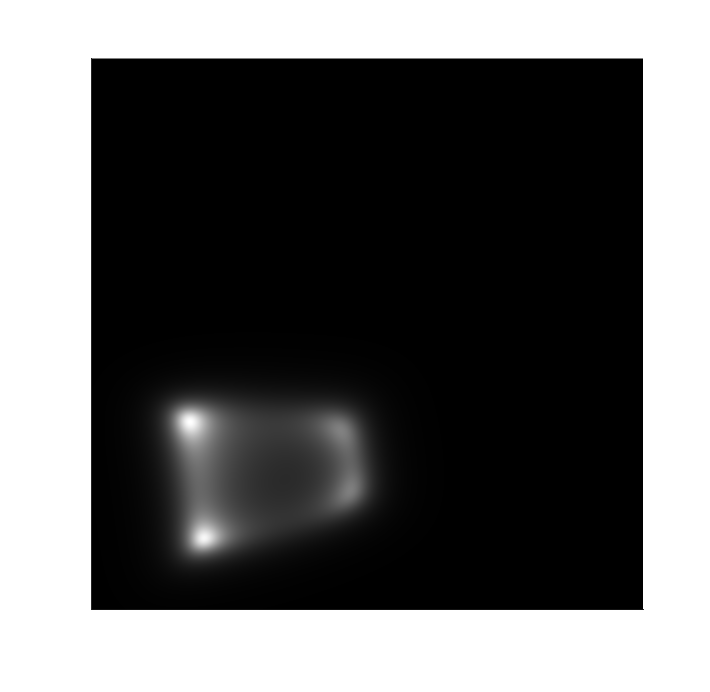} 
 & \includegraphics[scale=0.24]{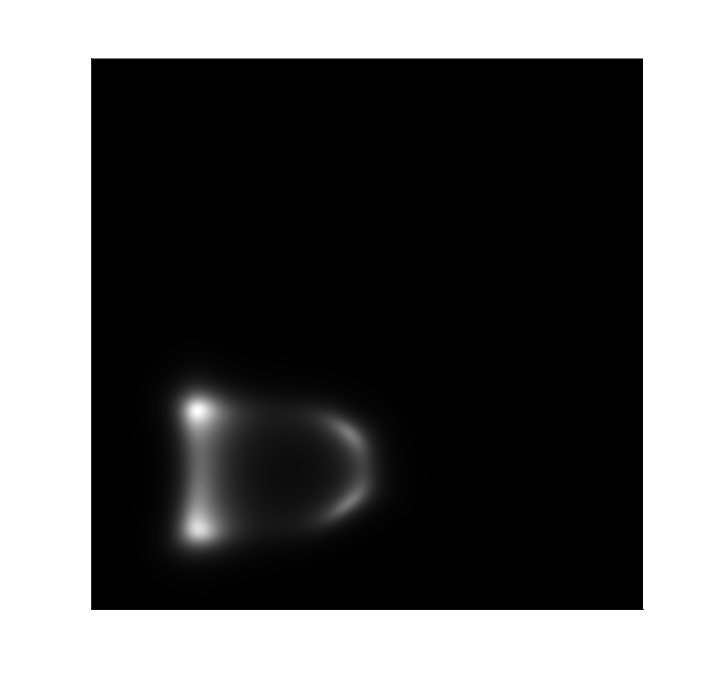} 
 & \includegraphics[scale=0.24]{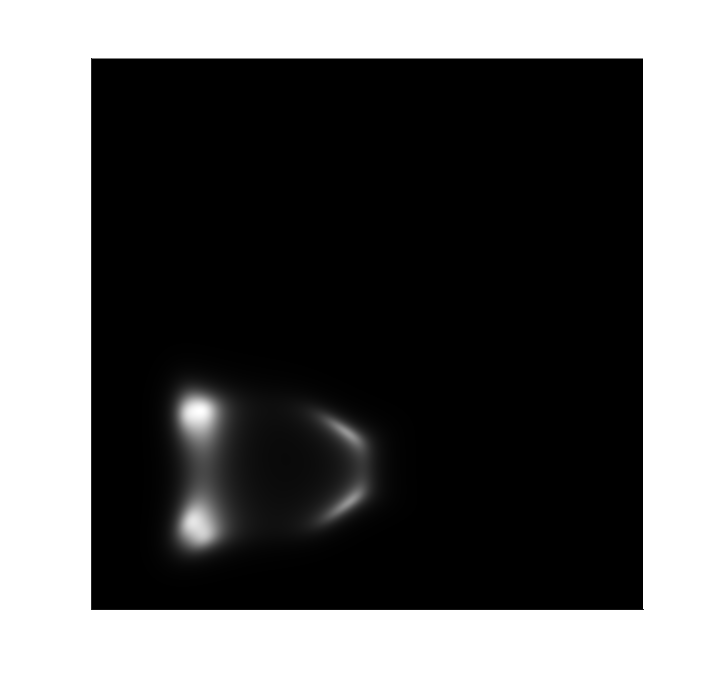} \\
\raisebox{35pt}{$\lambda=1$}
 & \includegraphics[scale=0.24]{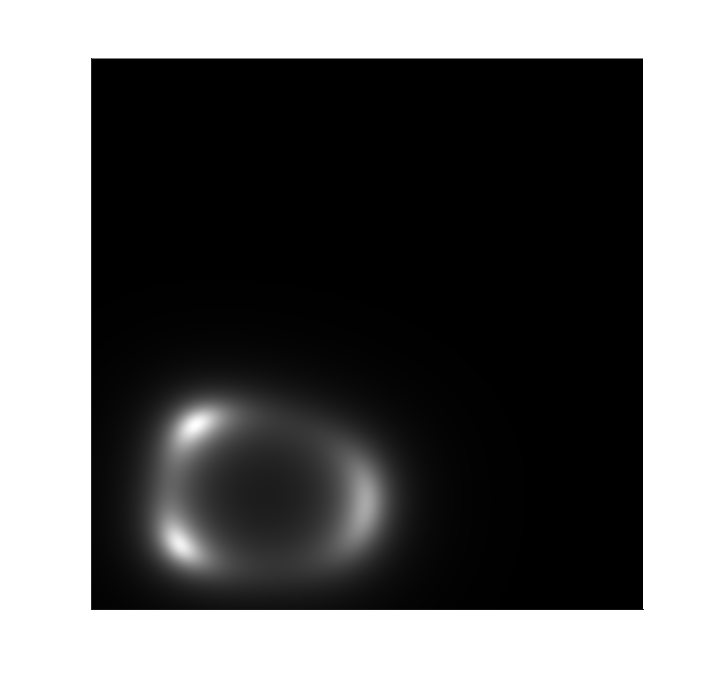}
 & \includegraphics[scale=0.24]{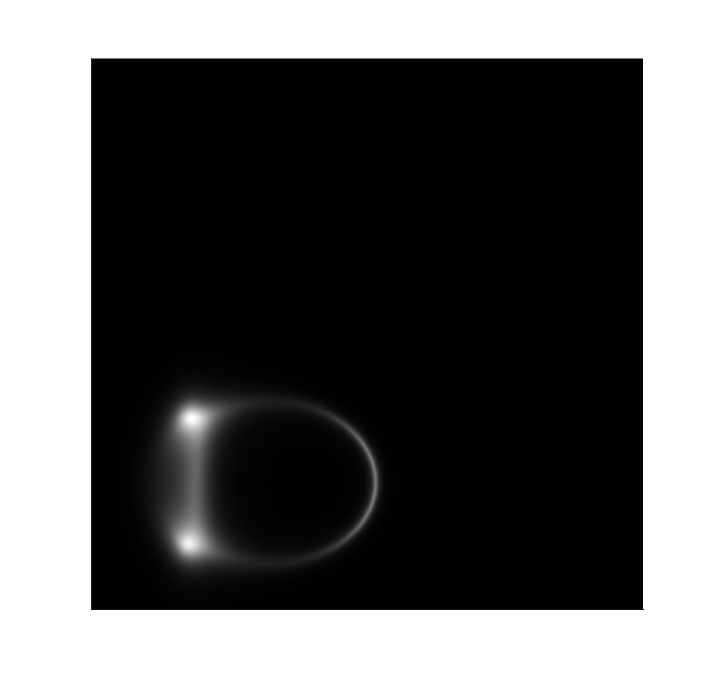} 
 & \includegraphics[scale=0.24]{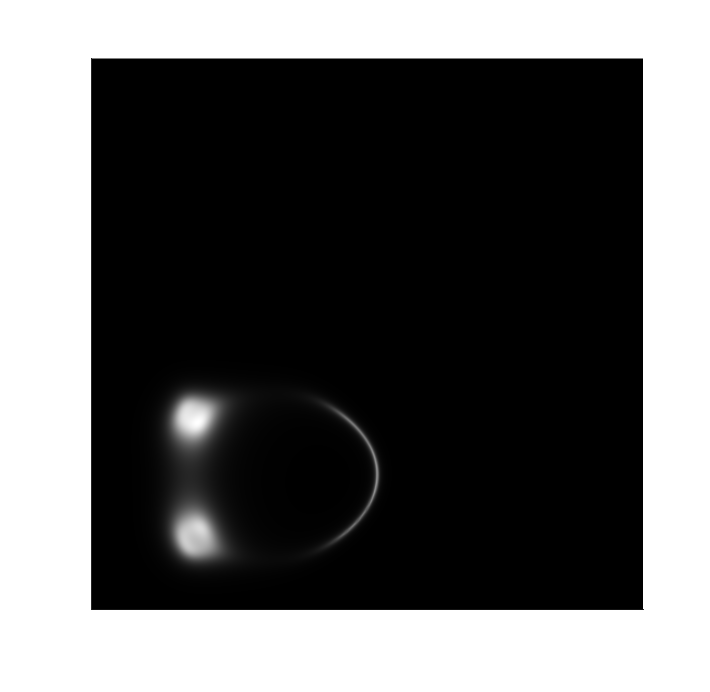} 
 & \includegraphics[scale=0.24]{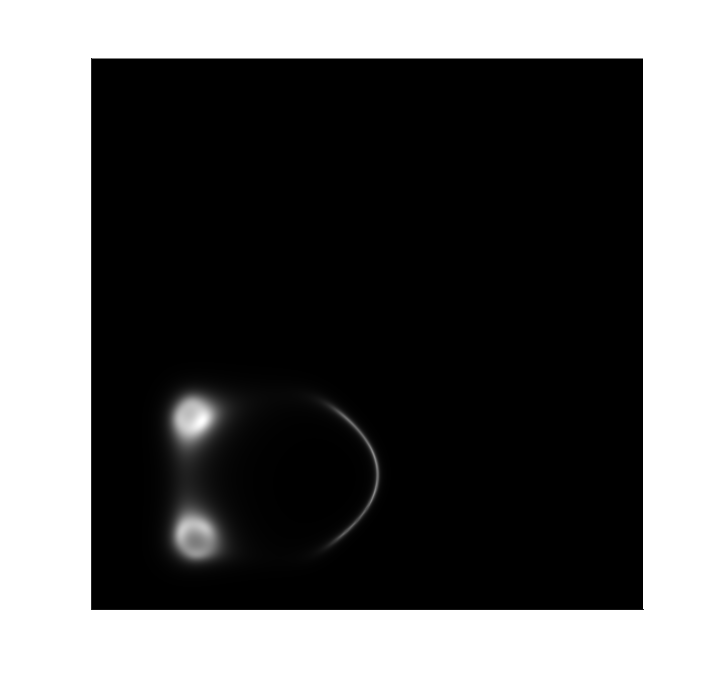} 
   \end{tabular}
 \caption{Gromov-Wasserstein baycenters: estimated Christoffel function of barycenter $\bary((\mu_1,\mu_2),\lambda)$ for 
 $\lambda \in \{0,0.25,0.5,0.75,1\}$ and different orders of relaxation $r\in \{2,3,4,5\}$.}
 \label{fig:GWbarycenter-christoffel}
\end{figure} 


\section{Conclusions}
We have shown the theoretical foundations to compute most common optimal transport problems with a moment-SoS approach. The numerical results reveal that the method gives very good accuracies for the estimation of the value of the loss functions with very low orders.  The support of concentrated measures can efficiently be estimated with relatively low polynomial orders. This feature seems particularly appealing because it could be leveraged to cleverly allocate degrees of freedom in optimal transport solvers that approximate the optimal transport plan. 

\bibliographystyle{plain}
\bibliography{references.bib}
\end{document}